\definecolor{darkblue}{rgb}{0,0,0.8}
\newtheorem{theorem}{Theorem}
\newtheorem{lemma}{Lemma}
\newtheorem{assumption}{Assumption}
\newtheorem{remark}{Remark}
\newtheorem{corollary}{Corollary}
\newcommand{\order}[1]{\mathcal{O}\left(#1\right)}
\newcommand{\prt}[1]{\left(#1\right)}
\newcommand{\brk}[1]{\left[#1\right]}
\newcommand{\crk}[1]{\left\{#1\right\}}
\newcommand{\inpro}[1]{\left\langle #1 \right\rangle}
\newcommand{\condE}[2]{\E\brk{#1\middle|#2}}
\newcommand{\norm}[1]{\left\Vert #1 \right\Vert}
\newcommand{\bs}[1]{\boldsymbol{#1}}
\newcommand{\normi}[1]{\Vert #1 \Vert}
\newcommand{\crki}[1]{\{#1\}}
\newcommand{\brki}[1]{[#1]}
\newcommand{\orderi}[1]{\mathcal{O}(#1)}
\newcommand{\condEi}[2]{\E\brki{#1|#2}}
\newcommand{\R}{\mathbb{R}}
\newcommand{\E}{\mathbb{E}}
\newcommand{\sumn}{\sum_{i=1}^n}
\newcommand{\cF}{\mathcal{F}}
\newcommand{\uf}{f^*}
\newcommand{\cL}{\mathcal{L}}
\newcommand{\xitl}{x_{i,t}^\ell}
\newcommand{\cG}{\mathcal{G}}
\newcommand{\cC}{\mathcal{C}}
\newcommand{\cO}{\mathcal O}
\newcommand{\cE}{\mathcal{E}}
\newcommand{\cB}{\mathcal{B}}
\newcommand{\teta}{\tilde{\eta}}
\newcommand{\sigfn}{\sigma_f^*}
\definecolor{cuhkpl}{RGB}{152,24,147}
\def\kh#1{\color{cuhkpl}{#1}}
\def\BibTeX{{\rm B\kern-.05em{\sc i\kern-.025em b}\kern-.08em
    T\kern-.1667em\lower.7ex\hbox{E}\kern-.125emX}}
\begin{document}
\title{Distributed Stochastic Optimization under a General Variance Condition}
\author{Kun Huang, Xiao Li, \IEEEmembership{Member, IEEE}, and Shi Pu, \IEEEmembership{Senior Member, IEEE}
\thanks{X. Li was partially supported by the National Natural Science Foundation of China (NSFC) under Grant No. 12201534 and by the Shenzhen Science and Technology Program under Grant No. RCBS20210609103708017. S. Pu was partially supported by Guangdong Talent Program under Grant 2021QN02X216, by the National Natural Science Foundation of China under Grant 62003287, and by Shenzhen Science and Technology Program under Grant RCYX202106091032290.}
\thanks{K. Huang, X. Li and S. Pu are with the School of Data Science, The Chinese University of Hong Kong,
Shenzhen (CUHK-Shenzhen), Shenzhen 518172, China (e-mail: kunhuang@link.cuhk.edu.cn; lixiao@cuhk.edu.cn; pushi@cuhk.edu.cn).}
}

\maketitle

\begin{abstract}
    Distributed stochastic optimization has drawn great attention recently due to its effectiveness in solving large-scale machine learning problems.  Though numerous algorithms have been proposed and successfully applied to general practical problems, their theoretical guarantees mainly rely on certain boundedness conditions on the stochastic gradients, varying from uniform boundedness to the relaxed growth condition. In addition, how to characterize the data heterogeneity among the agents and its impacts on the algorithmic performance remains challenging. In light of such motivations, we revisit the classical Federated Averaging (FedAvg) algorithm \cite{mcmahan2017communication} as well as the more recent SCAFFOLD method \cite{karimireddy2020scaffold} for solving the distributed stochastic optimization problem and establish the convergence results under only a mild variance condition on the stochastic gradients for smooth nonconvex objective functions. Almost sure convergence to a stationary point is also established under the condition. Moreover, we discuss a more informative measurement for data heterogeneity as well as its implications. 
\end{abstract}

\begin{IEEEkeywords}
Distributed Optimization, Stochastic Optimization, Nonconvex Optimization
\end{IEEEkeywords}

\section{Introduction}
\label{sec:introduction}
    \IEEEPARstart{W}{e} consider solving the following optimization problem by a group of agents $[n]:=\crki{1,2,\cdots, n}$: 
	\begin{equation}
		\label{eq:P}
		\min_{x\in\R^p} f(x):=  \frac{1}{n} \sumn f_i(x),
	\end{equation}
	where each $f_i:\R^p\rightarrow \R$ is the local cost function known by agent $i$ only. Solving problem \eqref{eq:P} with or without a central coordinator has attracted great interest recently due to its wide applications in multi-agent target seeking{\cite{pu2018swarming}}, distributed estimation{\cite{plata2015distributed}}, machine learning{\cite{chowdhery2022palm,qin2016distributed}}, etc. In particular, we consider the distributed learning problem, where each $f_i$ is an expected risk function $f_i(x) = \E_{\xi} F_i(x;\xi)$ with $\xi$ representing the data, or an empirical risk function $f_i(x) ={\frac{1}{m_i}} \sum_{j=1}^{m_i} f_{i,j}(x)$. Under such formulations, querying noisy or stochastic gradients in the form of $g_i(x;\xi_i)$ is often more realistic compared to using full gradients, and thus distributed stochastic gradient methods have become the workhorse for solving problem \eqref{eq:P}.

	The study of distributed stochastic gradient methods typically relies on some boundedness conditions, i.e., the bounds on $\E_{\xi_i}[\normi{g_i(x;\xi_i) - \nabla f_i(x)}^2]$ and $\frac{1}{n} \sumn[\normi{\nabla f_i(x) - \nabla f(x)}^2]$ respectively. The first term  represents the variance of the stochastic gradients for each agent, and the second one measures the dissimilarity among the individual functions.
    These bounds are the key factors in characterizing the performance of an algorithm. Hence, it is necessary to relax both conditions simultaneously to have fewer restrictions on a distributed algorithm. In the literature, we often see the following two types of conditions; see, e.g., \cite{lian2017can,pu2021sharp,chen2022optimal}: 
	\begin{subequations}
		\label{eq:g_sep}
		\begin{align}
			&\E_{\xi_i}\brk{\norm{g_i(x;\xi_i) - \nabla f_i(x)}^2}\leq \sigma^2 + \eta^2\norm{\nabla f_i(x)}^2,\nonumber\\
            &\quad \forall i\in [n]\text{ and for some }\sigma, \eta\geq 0,\label{eq:intro_bv1}\\
			&\frac{1}{n} \sumn\brk{\norm{\nabla f_i(x) - \nabla f(x)}^2} \leq \zeta^2 + \psi^2\norm{\nabla f(x)}^2,\nonumber\\
            &\quad \text{ for some }\zeta, \psi\geq 0.\label{eq:intro_bgd1}
		\end{align}
	\end{subequations}
	Condition \eqref{eq:intro_bv1} is the so-called relaxed growth condition \cite{bottou2018optimization} on the stochastic gradient $g_i(x;\xi_i)$ of $f_i(x)$ and is usually hard to verify in practice even with both $\sigma, \eta> 0$. In fact, there is a simple finite-sum minimization problem that does not satisfy such a condition; see Proposition 1 in \cite{khaled2020better}. The second condition is the notorious $(\zeta^2,\psi^2)-$bounded gradient dissimilarity (BGD) assumption specific to the distributed optimization problem. It is often used to measure the data heterogeneity among the local datasets of the agents where larger $\zeta^2$ and $\psi^2$ indicate larger degree of non-i.i.d. 

    However, when using condition \eqref{eq:intro_bgd1} to measure the data heterogeneity, the constants $\zeta^2$ and $\psi^2$ are less informative if $\zeta,\psi>0$ and the summation $(\zeta^2 + \psi^2)$ can be invariant to different degrees of data heterogeneity, as elaborated in our numerical example in Subsection \ref{subsec:sim2}.
    Using \eqref{eq:intro_bgd1} directly as an assumption to derive convergence properties for distributed learning algorithms ignores how heterogeneous the data is as the relevant quantities are hidden in $\zeta$ and $\psi$, and hence it is meaningful to establish this condition for certain problem class and characterize the important constants $\zeta$ and $\psi$ more specifically. 
    It is worth noting that even though some previous works, e.g., \cite{huang2022improving,pu2021sharp,karimireddy2020scaffold,Li2020On,wang2022unreasonable,huang2022tackling} do not assume condition \eqref{eq:intro_bgd1}, the restriction is still significant due to condition \eqref{eq:intro_bv1}. These {observations} motivate our main contribution stated in the next subsection.

    \subsection{Main Contribution}
    The main contribution of this work is three-fold.
    Firstly, we discuss a more informative and practical measurement for data heterogeneity in distributed optimization for smooth nonconvex objective functions. Such a measurement is inspired by earlier works on distributed convex optimization or random-reshuffling methods, e.g., \cite{Li2020On,malinovsky2022server,huang2021distributed,das2022differentially}. The measurement only relies on the smoothness and lower boundedness of the objective functions. We also construct an example showing that such a measurement is not a trivial upper bound and clearly characterize its implications for the algorithmic performance.

    Secondly, for smooth nonconvex objective functions, we show both Federated Averaging (FedAvg) \cite{mcmahan2017communication} {and SCAFFOLD \cite{karimireddy2020scaffold}} maintain the same iteration complexity under a more general condition on the stochastic gradients and the new measurement for solving the distributed optimization problem \eqref{eq:P}. 
    Specifically, we consider the ABC condition \cite{khaled2020better,li2022unified,lei2019stochastic} in place of \eqref{eq:intro_bv1}. This relaxes the previous restrictions \eqref{eq:intro_bv1} and \eqref{eq:intro_bgd1} in this work. {Moreover, the impact of data heterogeneity is clearly demonstrated by examining and comparing the complexity results of FedAvg and SCAFFOLD. Interestingly, our analysis reveals that SCAFFOLD can also be affected by the data heterogeneity caused by the general variance condition on the stochastic gradients. We further support the theoretical results with numerical experiments.}

    Thirdly, we derive the almost sure convergence result for every single run of FedAvg under the decreasing stepsize policy for minimizing smooth nonconvex objective functions under the same general condition. Such a result complements the complexity result and provides a partial justification for the last iterate output by FedAvg.



\subsection{Related Works}

    We restrict our discussions to the first-order distributed optimization methods for solving problem \eqref{eq:P}. In general, the works for solving \eqref{eq:P} in a distributed manner can be classified into two categories based on whether a central server or coordinator is available. The algorithms that do not rely on a central coordinator are often referred to as decentralized methods, initiated from \cite{nedic2009distributed}. 
    Recently, many decentralized stochastic gradient methods, e.g., \cite{lian2017can,pu2021sharp,pu2021distributed,huang2022improving,yuan2020influence,alghunaim2021unified,tang2018d,li2019decentralized,xin2021improved,chen2015learning,chen2015learning2,koloskova2021improved} have been shown to achieve comparable performance to their centralized counterparts where centralized information aggregation is possible.
    Among these schemes, gradient tracking based algorithms \cite{pu2021distributed,xin2021improved,koloskova2021improved} and primal-dual like methods \cite{huang2022improving,yuan2020influence,tang2018d} can relieve from the data heterogeneity assumption \eqref{eq:intro_bgd1}, but they still require condition \eqref{eq:intro_bv1}. 

    When a central coordinator is available, Federated Averaging (FedAvg) \cite{mcmahan2017communication} is the most popular distributed learning method due to its simplicity and efficiency.  
    Nevertheless, condition \eqref{eq:intro_bgd1} is commonly assumed even for the simplest version of FedAvg, i.e., full user participation, equal local steps, and equal weighted objective functions; see, e.g., \cite{woodworth2020mini,yu2019linear}. For more involved settings, the options of partial participation and different local steps in FedAvg can introduce more degrees of data heterogeneity, and thus many variants 
    have been proposed to address the issue. {For instance, the FedProx method introduced in \cite{li2020federatedopt} enhances the practical performance compared to FedAvg under data heterogeneous settings but lacks satisfactory theoretical guarantee for smooth nonconvex objectives with noisy gradients. The SCAFFOLD method proposed in \cite{karimireddy2020scaffold} enhances both practical and theoretical performance of FedAvg when encountering data heterogeneity. The work in \cite{gorbunov2021local} proposed a unified framework for both methods under (strongly) convex setting. Such a framework also inspires other variants for Federated Learning. 
    For in-depth discussions, we refer to recent surveys such as \cite{grudzien2023can} and \cite{kairouz2021advances}. Nevertheless, condition {\eqref{eq:intro_bv1}} is still assumed in these cases.}


    There are some recent works aiming at relaxing condition \eqref{eq:intro_bgd1} for distributed stochastic gradient methods. For strongly convex objective functions, the works in \cite{Li2020On,wang2022unreasonable,pu2021sharp,huang2022tackling} have relaxed condition \eqref{eq:intro_bgd1} by considering other quantities related to the optimal solution to problem \eqref{eq:P} and the solution to problem $\min_x f_i(x)$. However, assumption \eqref{eq:intro_bv1} is still necessary in \cite{wang2022unreasonable,pu2021sharp,huang2022tackling}, and a more restricted uniformly bounded gradient is considered in \cite{Li2020On}. To relax assumption \eqref{eq:intro_bv1}, the so-called ABC condition is currently the most general alternative in distributed settings; see \cite{li2022unified,khaled2020better,lei2019stochastic}, and more detailed discussions about this condition can be found in \cite{khaled2020better,lei2019stochastic}. It is worth noting that there exists a more general variance assumption for traditional stochastic gradient methods \cite{patel2022global,patel2022stopping}, but it remains unclear how to obtain complexity results under the assumption.
    Another line of work avoids condition \eqref{eq:intro_bv1} by specifying a different sampling strategy, i.e., distributed random reshuffling-type methods \cite{huang2021distributed,malinovsky2022server}. 

    Table \ref{tab:fedavg} presents a detailed comparison between this work with state-of-the-art results in the literature.


    \begin{table*}[htbp]
        \setlength{\tabcolsep}{4pt}
        \setlength\extrarowheight{10pt}
        \centering
        \begin{threeparttable}
        \caption{A comparison of the current results for FedAvg/Local SGD type methods. All the methods assume each $f_i$ is smooth and $f$ is bounded from below. The column ``$g$" corresponds to the bound on the variance of the stochastic gradients, i.e., condition \eqref{eq:intro_bv1}.  The ``Non-IID" column states the measure for data heterogeneity among the agents. 
        The last column states whether there is a result showing $\lim_{t\rightarrow\infty}\norm{\nabla f(x_t)}=0$ almost surely. }
         \label{tab:fedavg}
    \begin{tabular}{@{}ccccccc@{}}
        \toprule
        Paper                    & $g$                  & {\makecell[c]{Additional\\ Assumption}}      & Non-IID                                               & $f$        & Rate                                                                                                                                                    & a.s. \\ \midrule
        \cite{wang2022unreasonable} & $(\sigma^2,0)$       & /                           & $\rho$\tnote{(a)}                                     & $f_i$ SCVX & $\tilde{\cO}\prt{\frac{{  \sigma^2}}{nT} + \frac{{  \sigma^2}}{T^2} + \rho^2}$                                                                                                & N    \\
        \cite{Li2020On}             & $G^2$\tnote{(b)}     & /                           & $f^* - \frac{1}{n}\sumn f_i^*$\tnote{(c)}             & $f_i$ SCVX & $\order{\frac{f^* - \frac{1}{n}\sumn f_i^* {  + G^2 + \sigma^2}}{T}}$                                                                                                        & N    \\
        \cite{khaled2020tighter}    & $(\sigma^2,0)$       & /                           & $\frac{1}{n}\sumn\norm{\nabla f_i(x^*)}^2$\tnote{(d)} & CVX        & $\cO\biggl(\frac{{  \sigma^2} + \frac{1}{n}\sumn\norm{\nabla f_i(x^*)}^2}{\sqrt{nT}}+ \frac{{  n(\frac{1}{n}\sumn\norm{\nabla f_i(x^*)}^2 + \sigma^2)}}{T}\biggr)$ & N    \\
        \cite{woodworth2020mini}    & $(\sigma^2,0)$       & $(\zeta^2, 0)$-BGD          & $(\zeta^2, 0)$-BGD                                    & CVX        & $\order{\frac{{  \sigma^2 }}{\sqrt{nT}} + \frac{\zeta^{\frac{2}{3}} + {  \sigma^{\frac{2}{3}}}}{T^{\frac{2}{3}}} } $                                                                          & N    \\
        \cite{yu2019linear}         & $(\sigma^2,0)$       & $(\zeta^2, 0)$-BGD          & $(\zeta^2, 0)$-BGD                                    & NCVX       & $\order{\frac{{  \sigma^2 }}{\sqrt{nT}} + \frac{({  \sigma^2} + \zeta^2)n}{T}}$                                                                                                & N    \\
        \cite{yang2021achieving}    & $(\sigma^2,0)$       & $(\zeta^2, 0)$-BGD          & $(\zeta^2, 0)$-BGD                                    & NCVX       & $\order{\frac{{  \sigma^2}}{\sqrt{nT}} + \frac{{ n(\zeta^2+\sigma^2)}}{T} }$                                                                                                      & N    \\
        \cite{wang2020tackling}     & $(\sigma^2,0)$       & $(\zeta^2, \psi^2 + 1)$-BGD & $(\zeta^2, \psi^2 + 1)$-BGD                           & NCVX       & $\order{\frac{{  \sigma^2}}{\sqrt{nT}} + \frac{{  n(\zeta^2+\sigma^2)}}{T}}$                                                                                                  & N    \\
        \cite{qin2022role}          & $(\sigma^2,0)$       & $(\zeta^2, \psi^2 + 1)$-BGD & $(\zeta^2, \psi^2 + 1)$-BGD                           & NCVX       & $\order{\frac{{  \sigma^2}+\zeta^2}{\sqrt{nQT}}}$                                                                                                                   & N    \\
        \cite{chen2022optimal}      & $(\sigma^2, \eta^2)$ & $(\zeta^2, 0)$-BGD          & $(\zeta^2, 0)$-BGD                                    & NCVX       & $\order{\frac{{  \eta^2 + \sigma^2}}{\sqrt{nT}} + \zeta^2}$                                                                                                                 & N    \\ \midrule
        \textbf{\makecell[c]{FedAvg\\ (This work)}}                       & \textbf{ABC}                  & $\bs{f_i}$ \textbf{lower bounded}\tnote{(e)}                           & $\boldsymbol{{ \sigfn}}$\tnote{(f)}               & \textbf{NCVX}       & $\boldsymbol{\order{\frac{{  D + {C\sigfn}} }{\sqrt{n{Q}T}} + {\frac{(1+C)\sigfn}{T^{2/3}}} + {\frac{nD}{T}} }}$\tnote{(g)}                                                                               & \textbf{Y}{\tnote{(h)}}    \\ 
        \textbf{{\makecell[c]{SCAFFOLD\\ (This work)}}}                       & \textbf{{ABC}}                  & {$\bs{f_i}$ \textbf{lower bounded}}                           & $\boldsymbol{{\sigfn}}$               & {\textbf{NCVX}}       & $\boldsymbol{{\order{\frac{{  D +  C\sigfn} }{\sqrt {nQT}} + \frac{C\sigfn}{(nQ^2)^{1/6}T^{2/3}} + \frac{D + C\sigfn}{T}}}}$                                                                               & {\textbf{N}}    \\ 
        \bottomrule
        \end{tabular}
        \begin{tablenotes}
            \item[(a)] Average drift at the optimum; see \eqref{eq:rho} for details.
            \item[(b)] The constant $G$ is defined such that $\E\brki{\norm{g_i(x;\xi)}^2}\leq G^2$.
            \item[(c)] We define $f^* := f(x^*)$ and $f_i^* := f_i(x_i^*)$, where $x^*$ is the optimal solution to $\min_x f(x)$ and $x_i^*$ is the optimal solution to $\min_x f_i(x)$.
            \item[(d)] 
            We choose $x^*$ as a fixed minimizer of $f$.
            \item[(e)] This is a mild assumption satisfied for many practical problems.
            \item[(f)] {We denote $\sigfn:= \uf- \sumn\uf_i/n$.} The quantities $f^*$ and $f_i^*$ are defined as $f^*:=\inf_x f(x)$ and $f_i^*:= \inf_x f_i(x)$. 
            \item[(g)] The constant $D$ is defined in the ABC assumption \ref{as:abc}.
            \item[(h)] {The result requires the stepsize sequence $\{\alpha_t\}$ to satisfy $\sum_{t=0}^{\infty}\alpha_t = \infty, \ \sum_{t=0}^\infty \alpha_t^2 <\infty.$}
          \end{tablenotes}
        \end{threeparttable}
    \end{table*}

    \section{Relaxed Conditions}
    
    In this section, we introduce the standing assumptions considered in this paper.
    
    The following assumption is the so-called ABC condition\footnote{The original ABC condition includes the term $B\norm{\nabla f_i(x)}^2$, which can be absorbed by the term $(f_i(x) - \uf_i)$ given smoothness. Hence, it is sufficient to consider the case in \eqref{eq:abc}.}, which is currently the most general bound on the variance of the stochastic gradients for each agent. In particular, it covers the counterexample given in Proposition 1 of \cite{khaled2020better}. 
            \begin{assumption}
                \label{as:abc}
                {Assume the generated stochastic process $\crki{x_{i,k}^\ell, i\in[n], \ell\in \crk{1,2,\cdots, Q-1}}$, where $Q$ denotes the number of local updates, is adapted to the filtration $\crki{\cF_k^\ell}$, and each stochastic gradient $g_i(x_{i,k}^{\ell};\xi_{i,k}^{\ell})$ is $\cF_{k + 1}^{\ell}-$measurable.} Each agent has access to an unbiased stochastic gradient $g_i(x;\xi_i^\ell)$ of $\nabla f_i(x)$, i.e., $\condEi{g_i(x;\xi_{i,k}^\ell)}{\cF_k^\ell} = \nabla f_i(x)$ almost surely, and there exists $C,D\geq 0$ such that for any $k\in\mathbb{N}$, $i\in[n]$, and $\ell\in\crki{0,1,\cdots, Q-1}$,
                \begin{align}
                    \label{eq:abc}
                    \condE{\norm{g_{i}(x;\xi_{i,k}^\ell) - \nabla f_i(x)}^2}{\cF_k^\ell}&\leq C\brk{f_i(x) - \uf_i} + D,\nonumber\\
                    &\quad \text{ almost surely},
                \end{align} 
                {where $f_i^* := \inf_x f_i(x)$.}
                In addition, the stochastic gradients are independent across different agents at each $k\geq 0$. 
            \end{assumption}
            
      The independence among  agents at each $k\geq 0$ is natural and commonly assumed in the distributed optimization literature; see, e.g., \cite{pu2021sharp,huang2022improving,pu2021distributed}. 
    
            \begin{remark}
                The filtration $\crki{\cF_k^\ell}$ has the relation 
                \begin{align*}
                    \cF_0^0 \subset \cF_0^1\subset\cdots \subset\cF_0^{Q-1}\subset\cF_1^0\subset\cF_1^1\subset\cdots\subset\cF_k^{\ell} \subset\cdots
                \end{align*}
                We denote $\cF_k:=\cF_k^0, \forall k$ for simplicity to emphasize the server's iterates.
            \end{remark}
            
        {Our second assumption} is the common smoothness and lower boundedness condition for studying nonconvex objective functions. 
            \begin{assumption}
                \label{as:smooth}
                Each $f_i(x):\R^p\rightarrow\R$ is $L$-smooth, i.e., 
                \begin{align*}
                    \norm{\nabla f_i(x) - \nabla f_i(x')}\leq L\norm{x - x'}, \ \forall x,x'\in\R^p.
                \end{align*}
                In addition, each $f_i(x)$ is bounded from below, i.e., $f_i(x)\geq \uf_i >-\infty$, $\forall x\in\R^p$.
            \end{assumption}
              \begin{remark}
                Condition \eqref{eq:intro_bv1} together with Assumption \ref{as:smooth} implies \eqref{eq:abc}. {This is due to the descent lemma \cite{nesterov2003introductory}:
                \begin{equation}
                    \label{eq:descent_lemma}
                    \begin{aligned}
                        f_i(y) \leq f_i(x) + \inpro{\nabla f_i(x), y-x} + \frac{L}{2}\norm{y-x}^2,\ \forall x,y\in\R^p.
                    \end{aligned}
                \end{equation}
                Taking $y = x - \nabla f_i(x)/L$ and noting that $f_i(y)\geq f_i^*$ lead to 
                \begin{align}
                    \label{eq:nf}
                    \norm{\nabla f_i(x)}^2\leq 2L\prt{f_i(x) - f_i^*}.
                \end{align}
                }
            \end{remark}
            \begin{remark}
                \label{rem:smooth}
                Assumption \ref{as:smooth} indicates that $f$ has a lower bound $\frac{1}{n}\sumn \uf_i$. In addition, we have $\uf:= \inf_x f(x)\geq \frac{1}{n}\sumn \uf_i$ and $\normi{\nabla f_i(x)}^2$ is upper bounded, i.e.,{\eqref{eq:nf}.}
                Such a relationship naturally leads to an upper bound for $\frac{1}{n} \sumn[\normi{\nabla f_i(x) - \nabla f(x)}^2]$, and hence one can replace the RHS in condition \eqref{eq:intro_bgd1} with a more quantitative measure. Specifically, we have the following result.
            \end{remark}
       \begin{lemma}
            \label{lem:ngrad}
            Let Assumption \ref{as:smooth} hold. We have 
            \begin{multline}
                    \frac{1}{n}\sumn\norm{\nabla f_i(x) - \nabla f(x)}^2
                    \leq 2L\prt{f(x) - \uf} + 2L{\sigfn},
            \end{multline}
            {where $\sigfn=\uf - \frac{1}{n}\sumn \uf_i$.}
        \end{lemma}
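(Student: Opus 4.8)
The plan is to reduce the statement to the pointwise gradient bound \eqref{eq:nf} that has already been extracted from the descent lemma. First I would remove the cross term via the standard bias--variance identity: since $\nabla f(x) = \frac{1}{n}\sumn \nabla f_i(x)$ is exactly the arithmetic mean of the local gradients, one has
\[
\frac{1}{n}\sumn\norm{\nabla f_i(x) - \nabla f(x)}^2 = \frac{1}{n}\sumn\norm{\nabla f_i(x)}^2 - \norm{\nabla f(x)}^2 \leq \frac{1}{n}\sumn\norm{\nabla f_i(x)}^2,
\]
where the inequality just drops the nonpositive term $-\norm{\nabla f(x)}^2$.

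Next I would invoke \eqref{eq:nf} for each agent, namely $\norm{\nabla f_i(x)}^2 \leq 2L\prt{f_i(x) - \uf_i}$, which is legitimate under Assumption \ref{as:smooth} ($L$-smoothness plus lower boundedness of each $f_i$), and average over $i$:
\[
\frac{1}{n}\sumn\norm{\nabla f_i(x)}^2 \leq \frac{2L}{n}\sumn\prt{f_i(x) - \uf_i} = 2L\prt{f(x) - \frac{1}{n}\sumn \uf_i}.
\]
Finally I would split $f(x) - \frac{1}{n}\sumn \uf_i = \prt{f(x) - \uf} + \prt{\uf - \frac{1}{n}\sumn \uf_i} = \prt{f(x) - \uf} + \sigfn$, which yields the claimed bound $2L\prt{f(x)-\uf} + 2L\sigfn$.

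There is essentially no obstacle; the only points deserving a word of care are (i) that the decomposition step is an equality precisely because $\nabla f(x)$ is the mean of the $\nabla f_i(x)$, so nothing is lost except the harmless $-\norm{\nabla f(x)}^2$, and (ii) that $\sigfn = \uf - \frac{1}{n}\sumn\uf_i \geq 0$ by Remark \ref{rem:smooth}, so the right-hand side is a bona fide upper bound with a well-defined nonnegative constant. I expect the whole argument to be a two- or three-line computation once \eqref{eq:nf} is in hand.
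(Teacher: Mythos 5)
Your proposal is correct and matches the paper's own proof essentially verbatim: the same bias--variance expansion dropping $-\norm{\nabla f(x)}^2$, followed by averaging \eqref{eq:nf} over $i$ and splitting $f(x)-\frac{1}{n}\sumn\uf_i$ into $\prt{f(x)-\uf}+\sigfn$. Nothing to add.
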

     \begin{proof}
           See Appendix \ref{app:lem_ngrad}.
        \end{proof}


    Note that Assumptions
    \ref{as:abc} and \ref{as:smooth} can be verified in practice for many concrete problems. For example,the following lemma introduces such a condition that was first considered in \cite{lei2019stochastic}.
            \begin{lemma}
                \label{lem:h_con}
                Suppose each $f_i(x)$ has the finite sum structure, i.e., $f_i(x):= \frac{1}{m_i}\sum_{j=1}^{m_i} f_{i,j}(x)$. We define $F_i(x;\xi):= \frac{1}{m_i}\sum_{j=1}^{m_i}[\xi]_jf_{i,j}(x)$ {as in \cite{khaled2020better,richtarik2020stochastic,gower2019sgd,singh2023mini}}, where $\xi\in\R^{m_i}$ is a random vector corresponding to some sampling strategies. Assume  $\E[\xi]_j = 1$ for any $[\xi]_j$, and {the gradient of $F_i(x;\xi)$
                is Lipschitz continuous}, that is, for any $x,x'\in\R^p$ and $\xi$,
                \begin{align*}
                    \norm{\nabla F_i(x;\xi) - \nabla F_i(x';\xi)}_2\leq L(\xi)\norm{x-x'}_2,
                \end{align*}
                where $\nabla F_i(x;\xi) :=  \frac{1}{m_i}\sum_{j=1}^{m_i}[\xi]_j\nabla f_{i,j}(x)$.
                Then, $f_i(x)= \E_\xi F_i(x;\xi)$ is $L$-{smooth}, where $L=\E_\xi L(\xi)$. In addition, Assumption \ref{as:abc} is satisfied if:
                \begin{enumerate}
                    \item  the random variable $[\xi]_j$ is obtained by independent sampling with replacement, then $C= \max_i\frac{L}{C_i}$ and $D=2C(\uf - \frac{1}{n}\sumn\uf_i)$, where $C_i>0$ is some constant,
                    \item or $[\xi]_j$ is obtained by independent sampling without replacement, then $C=\max H_i L$ and $D=2C(\uf - \frac{1}{n}\sumn\uf_i)$, where $H_i>0$ is some constant. 
                \end{enumerate}
            \end{lemma}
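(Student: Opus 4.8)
The plan is to dispatch the smoothness assertion and the two realizations of the ABC condition separately, the common engine being inequality \eqref{eq:nf} applied at the level of the individual summands $f_{i,j}$. For the smoothness claim I would first note that differentiation commutes with the expectation, i.e.\ $\nabla f_i(x) = \E_\xi \nabla F_i(x;\xi)$; in the finite-sum model this is just linearity together with $\E[\xi]_j = 1$, since $\nabla F_i(x;\xi) = \frac{1}{m_i}\sum_{j=1}^{m_i}[\xi]_j\nabla f_{i,j}(x)$, and in general it follows from the uniform Lipschitz bound $L(\xi)$ via dominated convergence on bounded sets. Then, for any $x,x'$, Jensen's inequality and the hypothesis give $\norm{\nabla f_i(x) - \nabla f_i(x')} = \norm{\E_\xi[\nabla F_i(x;\xi) - \nabla F_i(x';\xi)]} \le \E_\xi\norm{\nabla F_i(x;\xi) - \nabla F_i(x';\xi)} \le \prt{\E_\xi L(\xi)}\norm{x - x'} = L\norm{x - x'}$, which is the first claim.

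For the ABC condition, since $g_i(x;\xi) = \nabla F_i(x;\xi)$ is unbiased for $\nabla f_i(x)$, the conditional variance is at most the second moment, so it suffices to upper bound $\E_\xi\norm{\nabla F_i(x;\xi)}^2$ by $C(f_i(x) - \uf_i) + D$. The key point is that, for the samplings considered, each component $f_{i,j}$ is itself smooth, say $L_{i,j}$-smooth, and bounded below by $f_{i,j}^* := \inf_x f_{i,j}(x)$ (a mild property of typical loss functions), so \eqref{eq:nf} gives $\norm{\nabla f_{i,j}(x)}^2 \le 2L_{i,j}(f_{i,j}(x) - f_{i,j}^*)$. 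In the sampling-with-replacement case $\nabla F_i(x;\xi)$ is an average of $b$ i.i.d.\ copies of $\nabla f_{i,J}(x)$ with $J$ uniform on $[m_i]$, so the off-diagonal terms in $\E_\xi\norm{\nabla F_i(x;\xi)}^2$ collapse to $\norm{\nabla f_i(x)}^2$ and the diagonal contributes a $1/b$ factor times $\frac{1}{m_i}\sum_j\norm{\nabla f_{i,j}(x)}^2$; applying \eqref{eq:nf} to the component gradients and once more to $\nabla f_i$ itself, and using $\E[\xi]_j = 1$ so that $\frac{1}{m_i}\sum_j f_{i,j}(x) = f_i(x)$, leaves a bound of the form $\text{(constant)}\cdot\bigl(f_i(x) - \frac{1}{m_i}\sum_j f_{i,j}^*\bigr)$. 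Writing $f_i(x) - \frac{1}{m_i}\sum_j f_{i,j}^* = (f_i(x) - \uf_i) + (\uf_i - \frac{1}{m_i}\sum_j f_{i,j}^*)$ and observing that the second bracket is nonnegative — because $\frac{1}{m_i}\sum_j f_{i,j}^* \le \frac{1}{m_i}\sum_j f_{i,j}(x) = f_i(x)$ for every $x$, hence $\le\uf_i$ — puts the estimate in the ABC form \eqref{eq:abc}. Taking a maximum over agents then delivers a single multiplicative constant of the advertised shape $\max_i L/C_i$, and bounding the local gaps $\uf_i - \frac{1}{m_i}\sum_j f_{i,j}^*$ in terms of $\sigfn = \uf - \frac{1}{n}\sumn\uf_i$ produces the stated $D = 2C\sigfn$.

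The sampling-without-replacement case follows the same route, the only new ingredient being that the $b$ drawn summands are negatively correlated rather than independent; there I would invoke the standard identity for the variance of a without-replacement average of $b$ items out of $m_i$, which carries the finite-population correction $\frac{m_i - b}{m_i - 1}$ and contributes the constant $H_i$, after which the computation is identical and gives $C = \max_i H_i L$ with the same $D$. I expect the main obstacle to be not any individual inequality but the accounting in the additive constant: one has to keep careful track of the per-component infima $f_{i,j}^*$, verify the sign of the gaps $\uf_i - \frac{1}{m_i}\sum_j f_{i,j}^*$, and relate them to $\sigfn$ so that $D$ is expressed solely through the global quantities already appearing in Lemma \ref{lem:ngrad}; the without-replacement variance identity is the only other point needing care, and it is a routine combinatorial calculation.
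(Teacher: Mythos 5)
Your treatment of the smoothness claim is exactly the paper's: differentiate under the expectation, apply Jensen's inequality to $\norm{\cdot}_2$, and integrate the pointwise Lipschitz constant $L(\xi)$. Where you diverge is the ABC part: the paper simply invokes Proposition~3 of \cite{khaled2020better} at the level of each agent's local finite sum, whereas you re-derive its content from scratch — second moment of the sampled gradient, the exact variance identities for with- and without-replacement minibatching (including the finite-population correction $\frac{m_i-b}{m_i-1}$), and inequality \eqref{eq:nf} applied to each component $f_{i,j}$. That derivation is sound and is essentially what the cited proposition proves; it does require, as you note, that each $f_{i,j}$ be bounded below with a smooth gradient, which is implicit in the lemma's hypotheses (the per-component case is a realization of $\xi$) but worth stating.

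The one step that would genuinely fail is the last bit of accounting you flag as "the main obstacle": bounding the local gaps $\uf_i - \frac{1}{m_i}\sum_j f_{i,j}^*$ in terms of $\sigfn = \uf - \frac{1}{n}\sumn\uf_i$. These live at different levels of the hierarchy and are not comparable: with $n=1$ one has $\sigfn=0$ while $\uf_1 - \frac{1}{m_1}\sum_j f_{1,j}^*$ can be arbitrarily large. What your computation actually delivers is $D \propto \max_i\prt{\uf_i - \frac{1}{m_i}\sum_j f_{i,j}^*}$, i.e.\ the intra-agent analogue of $\sigfn$, which is the additive constant appearing in Proposition~3 of \cite{khaled2020better}. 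The lemma's displayed formula $D = 2C\prt{\uf - \frac{1}{n}\sumn\uf_i}$ appears to be an overloading of notation for that per-agent quantity rather than something your route (or any route) can establish literally; since the paper's proof is a citation, this discrepancy is hidden there, but you should not try to prove it as written.
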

            
            \begin{remark}
                It is worth noting that the constants $C_i$ and $H_i$ are corresponding to the sampling strategies and are independent of $f_i^*$ and $f^*$. The condition that the gradient of $F_i(x;\xi)=\frac{1}{m_i}\sum_{j=1}^{m_i}[\xi]_jf_{i,j}(x)$ being Lipschitz continuous is mild here. For example, assuming each $f_{i,j}$ is a quadratic loss or logistic loss satisfies such a condition.
            \end{remark}
            
            \begin{proof}
                See Appendix \ref{app:lem_h_con}.
            \end{proof}

            \section{Measuring Data Heterogeneity}

            In earlier works for studying distributed nonconvex optimization, the common condition \eqref{eq:intro_bgd1} not only provides a bound for the gradient dissimilarity, but also suggests a measure for the data heterogeneity among the agents. However, the meanings of the parameters $\zeta^2$ and $\psi^2$ in condition \eqref{eq:intro_bgd1} are rather vague and often conservative.
                 Recall from Lemma \ref{lem:ngrad} that
              \begin{multline}
                        \label{eq:bgd1}
                            \frac{1}{n}\sumn\norm{\nabla f_i(x) - \nabla f(x)}^2
                            \leq 2L\prt{f(x) - \uf} + 2L{\sigfn}.
                    \end{multline}
             We consider the following quantity to measure data heterogeneity:
            \begin{equation}
            \label{eq:NonIID_measure}
                \sigfn=\uf - \frac{1}{n}\sumn \uf_i.
            \end{equation}
                
             {Such a quantity was introduced earlier under smooth strongly convex objective functions (see \cite{Li2020On}) and for random reshuffling methods (see \cite{malinovsky2022server}). This work generalizes the notion to the nonconvex setting.} 
             We will show that quantity \eqref{eq:NonIID_measure} provides a more informative measurement for data heterogeneity among the agents.

                \begin{remark}
                    Compared to the parameter $\zeta^2$ in condition \eqref{eq:intro_bgd1}, the quantity $(\uf - \frac{1}{n}\sumn \uf_i)$ can be estimated more easily. On one hand, $\uf_i$ equals $0$ for problems such as overparameterized neural networks \cite{song2023fedavg}. On the other hand, we can estimate $\uf$ using $f(x_T)$, where $x_T$ is the last iterate after running an optimization algorithm for $T$ iterations.
                \end{remark}
            
                \subsection{Related Notions}
                \label{subsec:uni}
            
                We discuss some options other than \eqref{eq:intro_bgd1} for measuring data heterogeneity \cite{wang2022unreasonable,Li2020On,pu2021sharp,huang2022tackling,anonymous2023linear,malinovsky2022server,huang2021distributed} in this part for comparison purpose. Generally speaking, the measurement \eqref{eq:NonIID_measure} can be applied to general smooth nonconvex objective functions, while the related works only consider smooth strongly convex objective functions \cite{wang2022unreasonable,Li2020On,pu2021sharp,huang2022tackling}, nonconvex objective functions satisfying the Polyak-Łojasiewic (PL) condition, or a different sampling strategy, random reshuffling (RR) \cite{huang2021distributed,malinovsky2022server}.
                
                The works in \cite{Li2020On,anonymous2023linear,malinovsky2022server,huang2021distributed} consider the same measurement for data heterogeneity as \eqref{eq:NonIID_measure}. However, the work in \cite{Li2020On} requires each $f_i$ to be smooth strongly convex; the author in \cite{anonymous2023linear} assumes the interpolation regime where each $f_i$ satisfies the PL condition with $\uf_i = \uf$; and the results in \cite{malinovsky2022server,huang2021distributed} consider the RR-type sampling strategy. The proposed \eqref{eq:NonIID_measure} can be seen as a generalization of these previous results. Moreover, as we only require Assumption \ref{as:abc} instead of the uniformly bounded stochastic gradient assumption in \cite{Li2020On} and Assumption \ref{as:smooth} instead of the PL condition and other restrictive assumptions in \cite{anonymous2023linear}, such a generalization is nontrivial. Our general settings coincide with a wide range of practical problems. 
            
                The papers \cite{pu2021sharp,huang2022tackling,khaled2020tighter,koloskova2020unified} consider the term $
                1/n\sumn\normi{\nabla f_i(x^*)}^2$ to measure data heterogeneity when each $f_i$ is smooth (strongly) convex. Lemma \ref{lem:equ} shows that such an option is equivalent to \eqref{eq:bgd1} when the PL condition is satisfied; {note that PL condition is implied by strong convexity \cite{karimi2016linear}}. 
            
                \begin{lemma}
                    \label{lem:equ}
                    Let Assumption \ref{as:smooth} hold and each $f_i$ satisfies the PL condition, i.e., $\frac{1}{2}\norm{\nabla f_i(x)}^2\geq \mu (f_i(x) - f_i(x_{i}^*))$ with $x_i^* = \arg\min_x f_i(x)$. Then,
                    \begin{align*}
                        2\mu\prt{f(x) -  \uf} + 2\mu{\sigfn}
                        &\leq \frac{1}{n}\sumn \norm{\nabla f_i(x)}^2\\
                        &\leq 2L\prt{f(x) - \uf} + 2L{\sigfn}.
                    \end{align*}
                \end{lemma}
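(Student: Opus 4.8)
The plan is to derive both inequalities by applying a pointwise bound on $\norm{\nabla f_i(x)}^2$ for each agent $i$, averaging over $i\in[n]$, and then performing one algebraic decomposition. The key bookkeeping identity I would record first is
\begin{equation*}
    \frac{1}{n}\sumn\prt{f_i(x) - \uf_i} = \prt{f(x) - \uf} + \prt{\uf - \frac{1}{n}\sumn\uf_i} = \prt{f(x) - \uf} + \sigfn,
\end{equation*}
which follows directly from $f(x) = \frac{1}{n}\sumn f_i(x)$ and the definition $\sigfn = \uf - \frac{1}{n}\sumn\uf_i$. Note $\sigfn\geq 0$ by Remark \ref{rem:smooth}, and under the PL condition each infimum $\uf_i$ is attained at $x_i^*$, so $f_i^* = f_i(x_i^*)$ and $f_i(x) - \uf_i \geq 0$; hence every averaged quantity below is nonnegative.

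For the upper bound, I would invoke inequality \eqref{eq:nf} from the Remark following Assumption \ref{as:smooth}, namely $\norm{\nabla f_i(x)}^2 \leq 2L\prt{f_i(x) - \uf_i}$ for each $i$, which uses only $L$-smoothness and lower boundedness. Averaging over $i$ and substituting the identity above yields
\begin{equation*}
    \frac{1}{n}\sumn\norm{\nabla f_i(x)}^2 \leq \frac{2L}{n}\sumn\prt{f_i(x) - \uf_i} = 2L\prt{f(x)-\uf} + 2L\sigfn.
\end{equation*}

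For the lower bound, I would rearrange the PL condition $\frac12\norm{\nabla f_i(x)}^2 \geq \mu\prt{f_i(x) - f_i(x_i^*)} = \mu\prt{f_i(x) - \uf_i}$ into $\norm{\nabla f_i(x)}^2 \geq 2\mu\prt{f_i(x) - \uf_i}$, average over $i$, and apply the same identity to get $\frac{1}{n}\sumn\norm{\nabla f_i(x)}^2 \geq 2\mu\prt{f(x)-\uf} + 2\mu\sigfn$. Combining the two displays gives the claim. I do not anticipate a genuine obstacle here; the only point requiring a line of care is confirming that $\uf_i = f_i(x_i^*)$ so that the PL-based bound and the smoothness-based bound refer to the same constant $\uf_i$, after which the proof is a two-line sandwich. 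I would simply cite the lemma's hypotheses and Remark \ref{rem:smooth} (for \eqref{eq:nf}) to keep it short.
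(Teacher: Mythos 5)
Your proposal is correct and matches the paper's proof essentially verbatim: both sandwich $\norm{\nabla f_i(x)}^2$ between $2\mu\prt{f_i(x) - \uf_i}$ (from PL, using $\uf_i = f_i(x_i^*)$) and $2L\prt{f_i(x) - \uf_i}$ (from \eqref{eq:nf}), then average over $i$. The only difference is that you spell out the averaging identity $\frac{1}{n}\sumn\prt{f_i(x)-\uf_i} = \prt{f(x)-\uf}+\sigfn$, which the paper leaves implicit.
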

            
                \begin{proof}
                    We have $\uf_i = f_i(x_i^*)$, and hence 
                    \begin{align*}
                        2\mu\prt{f_i(x) - f_i^*}\leq \norm{\nabla f_i(x)}^2\leq 2L\prt{f_i(x) - \uf_i}.
                    \end{align*}
                    Averaging among $i\in[n]$ yields the desired result.
                \end{proof}
            
                The paper \cite{wang2022unreasonable} proposes a new notion $\rho$ defined in \eqref{eq:rho} to characterize the effect of data heterogeneity tailored for FedAvg when each component function is smooth and strongly convex,
                \begin{equation}
                    \label{eq:rho}
                    \rho:= \norm{\cG(x^*)} = \norm{\frac{1}{n}\sumn \brk{\frac{1}{\eta Q}\prt{x^* - x_i^{(Q)}} }},
                \end{equation}
                where $x^*$ is the unique solution to problem \eqref{eq:P}, and $x_i^{(Q)}$ denotes the iterate of agent $i$ performing $Q$ local updates with stepsize $\eta$ starting from $x^*$. Such a measurement $\rho$ can be seen as a more accurate version of the term $1/n\sumn\normi{\nabla f_i(x^*)}^2$, which is a conservative upper bound of $\rho^2$. Although it was shown to be a good measurement of the data heterogeneity in Section 4 of \cite{wang2022unreasonable}, the optimal solution $x^*$ becomes ambiguous in the general nonconvex world. 
            
                \section{Convergence Analysis for FedAvg}
                \label{sec:fedavg}
            
                In this section, we introduce the convergence result for FedAvg (Algorithm \ref{alg:fedavg}) under only Assumptions \ref{as:abc} and \ref{as:smooth}. We consider the simplified setting with full user participation, equal number of local updates, and equal weighted objective functions. 
            
                \begin{algorithm}
                    \caption{FedAvg}
                    \label{alg:fedavg}
                    \begin{algorithmic}[1]
                        \STATE {\textbf{Server input: } initial $x_0$ and number of server updates $T$}.
                        \STATE {\textbf{Agent input: } number of local updates $Q$ and stepsize policy $\crk{\alpha_t}$}.
                        \FOR{$t=0,1,\cdots, T-1$}
                            \FOR{Agent $i\in[n]$ in parallel}
                                \STATE Receive $x_t$ from server and set $x_{i,t}^0 = x_t$.
                                 \FOR{$\ell = 0,1,\cdots, Q-1$}
                                    \STATE Query a stochastic gradient $g_i(x_{i,t}^\ell;\xi_{i,t}^\ell)$.
                                    \STATE Update $x_{i,t}^{\ell + 1} = x_{i,t}^{\ell} - \alpha_t g_i(x_{i,t}^\ell;\xi_{i,t}^\ell)$.
                                \ENDFOR 
                            \ENDFOR
                            \STATE Server aggregates $x_{t + 1} = \frac{1}{n}\sumn x_{i,t}^Q$ and sends $x_{t + 1}$ to all the agents.
                        \ENDFOR 
                        \STATE Output $x_{T}$.
                    \end{algorithmic}
                \end{algorithm}
            
                Algorithm \ref{alg:fedavg} yields the update:
                \begin{align}
                    \label{eq:xt}
                    x_{t + 1} = x_t - \frac{\alpha_t}{n}\sumn \sum_{\ell=0}^{Q-1} g_i(\xitl;\xi_{i,t}^\ell).
                \end{align}
            
                The synchronization at the beginning of each iteration $t$ gives rise to $x_{i,t}^0 = x_t, \forall t\geq 0$. Such a relation plays a major role in the analysis, which indicates that
                \begin{equation}
                    \label{eq:xitl_xt}
                    \begin{aligned}
                        x_{i,t}^{\ell + 1} &= \xitl - \alpha_t g_i(\xitl;\xi_{i,t}^\ell) =  x_t - \alpha_t \sum_{j=0}^\ell g_i(x_{i,t}^j;\xi_{i,t}^j),\\
                        & \ell \in \crk{0, 1,\cdots, Q-1}, \ t\geq 0.
                    \end{aligned}
                \end{equation}

                \subsection{Convergence Results: FedAvg}
                
                We introduce the main convergence results in this part. Besides the usual complexity result stated in Theorem \ref{thm:complexity}, when the stepsize policy satisfies a certain diminishing condition, we also present the asymptotic convergence result in Theorem \ref{thm:asy} in light of Theorem 2.1 in \cite{li2022unified}. The latter result complements the first one and provides partial guarantees that choosing the last iterate as the output is reasonable in practice.
                
                \begin{theorem}
                    \label{thm:complexity}
                    Suppose Assumptions \ref{as:abc} and \ref{as:smooth} hold, and let the stepsize $\alpha_t= \alpha$ satisfy 
                    
                    \begin{equation}
                        \label{eq:alphat}
                        \begin{aligned}
                            \alpha\leq \min&\crk{\sqrt{\frac{n}{Q(C^2 + L^2) T}}, \prt{\frac{1}{14Q^3L^2(2C+3L)T}}^{\frac{1}{3}}\right.\\
                            &\left.\quad \frac{1}{C}, \frac{1}{2\sqrt{2QCL}}},\; \forall t.
                        \end{aligned}
                    \end{equation}
                    
                    Then,
                    
                    \begin{equation}
                        \label{eq:minE}
                        \begin{aligned}
                            &\min_{t=0,1,\cdots,T-1} \E\brk{\norm{\nabla f(x_t)}^2} \leq \frac{12\prt{f(x_0) - \uf}}{\alpha Q T}\\
                            &\quad + \frac{2\alpha L}{n}\prt{D + 2C\sigfn}+ 4\alpha^2QL^2D\\
                            &\quad + 8\alpha^2L^2Q^2(3L+2C)\sigfn.
                        \end{aligned}
                    \end{equation}

                    In addition, if we set 
                    \begin{equation}
                        \label{eq:alphat_gamma}
                        \begin{aligned}
                            \alpha& = \frac{1}{\sqrt{\frac{Q(L^2+C^2)T}{n}} + \gamma},\\
                            \gamma&:= C + \prt{14Q^3L^2(2C+3L)T}^{1/3} + 2\sqrt{2QCL},\; \forall t,\\
                        \end{aligned}
                    \end{equation}
                    then 
                    \begin{equation}
                        \label{eq:minE_alpha}
                        \begin{aligned}
                            &\min_{t=0,1,\cdots,T-1} \E\brk{\norm{\nabla f(x_t)}^2} = \order{\frac{D + C\prt{\uf - \sumn\uf_i/n}}{\sqrt{nQT}}\right.\\
                            &\left.\quad + \frac{nD}{T}+ \frac{(1+C)\prt{\uf- \sumn\uf_i/n}}{T^{2/3}}}.
                        \end{aligned}
                    \end{equation}

                \end{theorem}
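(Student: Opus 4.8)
\emph{Proof proposal.} I would run the standard ``one-step descent plus client-drift'' analysis for local SGD, but with Assumption \ref{as:abc} in place of the usual bounded-variance bound, so that every instance of $\condEi{\normi{g_i(x;\xi)-\nabla f_i(x)}^2}{\cdot}$ is replaced by $C(f_i(x)-\uf_i)+D$ and then, via smoothness, funnelled back to $(f(x_t)-\uf)+\sigfn$ using $\tfrac1n\sumn(f_i(x_t)-\uf_i)=(f(x_t)-\uf)+\sigfn$. First, since $f$ is $L$-smooth (it averages $L$-smooth functions), apply the descent lemma \eqref{eq:descent_lemma} to $f$ along the server update \eqref{eq:xt}, write $\Delta_t:=x_{t+1}-x_t$, and take $\condEi{\cdot}{\cF_t}$ (recall $\cF_t=\cF_t^0$). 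By unbiasedness and the tower property over the nested filtration $\crk{\cF_t^\ell}$, the inner-product term equals $-\alpha\sum_{\ell}\inpro{\nabla f(x_t),\tfrac1n\sumn\condEi{\nabla f_i(\xitl)}{\cF_t}}$; adding and subtracting $\nabla f_i(x_t)$, using $\nabla f(x_t)=\tfrac1n\sumn\nabla f_i(x_t)$ and Young's inequality, this contributes $-\tfrac34\alpha Q\norm{\nabla f(x_t)}^2$ plus a client-drift remainder bounded via $L$-smoothness by $\alpha L^2\sum_{\ell}\tfrac1n\sumn\condEi{\normi{\xitl-x_t}^2}{\cF_t}$. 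For the quadratic term $\tfrac L2\condEi{\normi{\Delta_t}^2}{\cF_t}$, I split $g_i=\nabla f_i(\xitl)+(g_i-\nabla f_i(\xitl))$ and use $\norm{a+b}^2\le 2\norm a^2+2\norm b^2$: the noise part has zero-mean martingale increments across local steps $\ell$ (w.r.t.\ $\crk{\cF_t^\ell}$) and independent zero-mean increments across agents, so the cross terms vanish, the variances add, the $1/n$ appears, and Assumption \ref{as:abc} bounds it by $\tfrac1{n^2}\sumn\sum_\ell[C(f_i(\xitl)-\uf_i)+D]$; the ``mean'' part expands into $\norm{\nabla f(x_t)}^2$ and another drift remainder.

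Next I would prove a client-drift lemma. From \eqref{eq:xitl_xt}, $\xitl-x_t=-\alpha\sum_{j=0}^{\ell-1}g_i(x_{i,t}^j;\xi_{i,t}^j)$; squaring, taking $\condEi{\cdot}{\cF_t}$, and again separating noise (martingale differences, so variances add) from mean (using $\normi{\nabla f_i(x_{i,t}^j)}^2\le 2L(f_i(x_{i,t}^j)-\uf_i)$ from \eqref{eq:nf}), Assumption \ref{as:abc} yields a recursion of the shape $\condEi{\normi{\xitl-x_t}^2}{\cF_t}\le 2\alpha^2(2L\ell+C)\sum_{j<\ell}\condEi{f_i(x_{i,t}^j)-\uf_i}{\cF_t}+2\alpha^2\ell D$. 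I then bound $f_i(x_{i,t}^j)-\uf_i\le 2(f_i(x_t)-\uf_i)+L\normi{x_{i,t}^j-x_t}^2$ (smoothness followed by \eqref{eq:nf}) and close the recursion by a discrete Grönwall argument: the stepsize conditions \eqref{eq:alphat} make the self-referential factor at most $\tfrac12$, so $\sum_{\ell=0}^{Q-1}\tfrac1n\sumn\condEi{\normi{\xitl-x_t}^2}{\cF_t}\le\order{\alpha^2Q^2(LQ+C)}\prt{(f(x_t)-\uf)+\sigfn}+\order{\alpha^2Q^2}D$.

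Substituting this drift bound back into the descent inequality and invoking \eqref{eq:alphat} so that (i) the coefficient of $\norm{\nabla f(x_t)}^2$ is at most $-\tfrac12\alpha Q$ and (ii) the higher-order drift terms are absorbed, I obtain, after taking total expectations with $r_t:=\E[f(x_t)-\uf]$, a recursion $r_{t+1}\le(1+A)r_t-\tfrac{\alpha Q}{2}\E\norm{\nabla f(x_t)}^2+A\sigfn+B$ with $A=\order{\alpha^3Q^2L^2(LQ+C)+\alpha^2QCL/n}$ and $B=\order{\alpha^3Q^2L^2D+\alpha^2QDL/n}$. The only nuisance is the $(1+A)r_t$ coupling, which cannot be removed via convexity; dropping the negative gradient term gives $r_{t+1}\le(1+A)r_t+A\sigfn+B$, and the four bounds in \eqref{eq:alphat} are calibrated precisely so that $AT=\order1$ (the two pieces of $A$ are controlled by $\alpha\le(14Q^3L^2(2C+3L)T)^{-1/3}$ and $\alpha\le\sqrt{n/(Q(C^2+L^2)T)}$ respectively, the latter via $2CL\le C^2+L^2$). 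Hence $r_t\le\order{r_0+\sigfn+B/A}$ uniformly in $t$, $\sum_{t=0}^{T-1}r_t\le\order{T}(r_0+\sigfn+B/A)$, and so $A\sum_{t<T}r_t\le\order{AT}(r_0+\sigfn)+\order{B}T$. Telescoping the descent recursion over $t=0,\dots,T-1$, using $\min_t\E\norm{\nabla f(x_t)}^2\le\tfrac1T\sum_t\E\norm{\nabla f(x_t)}^2$, and dividing by $\tfrac{\alpha QT}{2}$ yields $\order{\tfrac{r_0}{\alpha QT}+\tfrac{A\sigfn}{\alpha Q}+\tfrac{B}{\alpha Q}}$; keeping the explicit constants (the Grönwall factor is what inflates the initial-gap coefficient to $12$) gives exactly \eqref{eq:minE}. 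Finally, \eqref{eq:minE_alpha} follows by substituting \eqref{eq:alphat_gamma}: from $1/\alpha=\sqrt{Q(L^2+C^2)T/n}+\gamma$ the term $12(f(x_0)-\uf)/(\alpha QT)$ splits into an $\order{(\sqrt{nQT})^{-1}}$ piece plus the $\gamma/(QT)$ piece, which is $\order{T^{-2/3}}+\order{T^{-1}}$, while the $\order{\alpha L/n}$, $\order{\alpha^2QL^2}$ terms are read off from the matching upper bounds on $\alpha$.

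The step I expect to be the main obstacle is getting Steps 2--4 to cooperate: because ABC (unlike bounded variance) injects $f_i(\xitl)-\uf_i$ into the client-drift itself, the drift recursion must be solved with the $C$-dependence kept explicit, and the resulting $(1+A)r_t$ coupling then has to be tamed without any convexity — which works only because the constraints in \eqref{eq:alphat} are simultaneously exactly strong enough to force the gradient coefficient negative, to absorb all higher-order drift, and to keep $AT$ bounded. Lining up all constants so that the final bound is the stated \eqref{eq:minE} rather than a mere $\order{\cdot}$ estimate is the bookkeeping-heavy part of the argument.
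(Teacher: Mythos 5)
Your proposal is correct and follows essentially the same route as the paper: the same one-step descent lemma under the ABC condition, the same reduction of $f_i(\xitl)-\uf_i$ to $(f(x_t)-\uf)+\sigfn$ plus drift via smoothness (Lemma \ref{lem:local_fi}), the same self-referential drift recursion closed by the stepsize conditions (Lemma \ref{lem:local_x}), and the same $(1+A)^T\leq e$ device combined with Lemma \ref{lem:min} to extract the minimum over $t$. The only cosmetic difference is that you fold $\normi{\nabla f_i(x_{i,t}^j)}^2$ directly into function values via \eqref{eq:nf} inside the drift bound, whereas the paper keeps an explicit $\normi{\nabla f(x_t)}^2$ term via Lemma \ref{lem:ngrad}; both lead to \eqref{eq:minE} up to the stated constants.
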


                \begin{proof}
                    See Appendix \ref{app:thm_complexity}.
                \end{proof}

                \begin{remark}
                    {We highlight the technical challenge of obtaining Theorem \ref{thm:complexity} compared with the earlier works that consider Assumption \eqref{as:abc} and condition \eqref{eq:intro_bv1}. The main difference results from the distributed feature of FedAvg, that is, each agent maintains its own iterate $x_{i,t}^\ell$ during the local update phases. Such a feature introduces the additional term $\sum_{\ell = 0}^{Q-1}\sumn\E[f_i(\xitl) - \uf_i|\cF_t]$ under Assumption \ref{as:abc}. Hence, additional analysis is needed to relate such a term to the function value at the server side, i.e., $(\E f(x_t) - \uf)$ compared to those in \cite{li2022unified,lei2019stochastic,khaled2020better}. The difference between the server's iterate and the agents' local iterates also imposes additional difficulties since it is coupled with the recursion of $\sum_{\ell = 0}^{Q-1}\sumn\E[f_i(\xitl) - \uf_i|\cF_t]$. This is different from those considering FedAvg with condition \eqref{eq:intro_bv1}, e.g., \cite{yu2019linear,wang2020tackling,yang2021achieving}}. 
                \end{remark}
            
                \begin{remark}
            The convergence results given in Theorem \ref{thm:complexity} coincide with those in earlier works up to constant factors; see, e.g., \cite{yu2019linear,wang2020tackling,yang2021achieving}. 
            \end{remark}
            
                For comparison purpose, we introduce Corollary \ref{cor:Ceq0} which states the convergence result of FedAvg when assuming uniformly bounded variance on the stochastic gradients, i.e., when $C=0$. 
                \begin{corollary}
                    \label{cor:Ceq0}
                    Suppose Assumption \ref{as:abc} holds with $C=0$ and Assumptions \ref{as:smooth} holds. Let the stepsize $\alpha_t= \alpha$ satisfy
                    \begin{align*}
                        \alpha\leq \min&\crk{\sqrt{\frac{n}{QL^2 T}}, \prt{\frac{1}{42Q^3L^3T}}^{\frac{1}{3}}},\; \forall t.
                    \end{align*}
                    Then, 
                    \begin{align*}
                        &\min_{t=0,1,\cdots,T-1} \E\brk{\norm{\nabla f(x_t)}^2} \leq \frac{12\prt{f(x_0) - \uf}}{\alpha Q T}\\
                        &\quad + \frac{2\alpha L D}{n}+ 4\alpha^2QL^2D + 24\alpha^2L^3Q^2\sigfn.
                    \end{align*}
            
                    In addition, if we set $\alpha = 1/[\sqrt{QL^2T/n} + (42T)^{1/3}QL],\;\forall t$,
                    then 
                    \begin{align*}
                        &\min_{t=0,1,\cdots,T-1} \E\brk{\norm{\nabla f(x_t)}^2}\\
                        & = \order{ \frac{D}{\sqrt{nQT}}  + \frac{nD}{T} + \frac{\uf- \sumn\uf_i/n}{T^{2/3}}}.
                    \end{align*}
                    
                \end{corollary}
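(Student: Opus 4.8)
The plan is to derive Corollary~\ref{cor:Ceq0} as the specialization of Theorem~\ref{thm:complexity} to the case $C=0$. Assumption~\ref{as:abc} with $C=0$ is precisely the classical uniformly bounded variance condition $\condE{\norm{g_i(x;\xi_{i,k}^\ell)-\nabla f_i(x)}^2}{\cF_k^\ell}\leq D$, so it is a legitimate instance of the hypotheses of Theorem~\ref{thm:complexity}; hence every conclusion of that theorem applies once we substitute $C=0$ and simplify.

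First I would substitute $C=0$ into the stepsize requirement~\eqref{eq:alphat}. The third and fourth entries of the minimum, $1/C$ and $1/(2\sqrt{2QCL})$, become $+\infty$ and hence drop out, while $2C+3L=3L$ turns $14Q^3L^2(2C+3L)T$ into $42Q^3L^3T$ and $C^2+L^2$ into $L^2$. Thus~\eqref{eq:alphat} collapses to exactly $\alpha\leq\min\crk{\sqrt{n/(QL^2T)},\,(1/(42Q^3L^3T))^{1/3}}$, the condition stated in the corollary.

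Next I would plug $C=0$ into the guarantee~\eqref{eq:minE}: the term $\frac{2\alpha L}{n}(D+2C\sigfn)$ reduces to $\frac{2\alpha LD}{n}$, the term $4\alpha^2QL^2D$ is unchanged, and $8\alpha^2L^2Q^2(3L+2C)\sigfn$ becomes $24\alpha^2L^3Q^2\sigfn$, which yields precisely the claimed bound. For the asymptotic rate I would then set $C=0$ in~\eqref{eq:alphat_gamma}, giving $\gamma=(42Q^3L^3T)^{1/3}=(42T)^{1/3}QL$ and $\alpha=1/[\sqrt{QL^2T/n}+(42T)^{1/3}QL]$; substituting this $\alpha$ into the simplified bound (equivalently, setting $C=0$ in~\eqref{eq:minE_alpha}) and using $1/\alpha=\sqrt{QL^2T/n}+(42T)^{1/3}QL$ to split the $1/(\alpha QT)$ term, the three surviving contributions balance to $\order{D/\sqrt{nQT}+nD/T+(\uf-\sumn\uf_i/n)/T^{2/3}}$.

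There is essentially no obstacle here beyond bookkeeping; the one point that needs care is confirming that the degenerate constraints $1/C$ and $1/(2\sqrt{2QCL})$ in~\eqref{eq:alphat} genuinely become inactive at $C=0$, so that the weaker stepsize condition of the corollary still suffices for every inequality invoked in the proof of Theorem~\ref{thm:complexity}. Alternatively, one could rerun that proof with $C=0$ fixed throughout, which is slightly cleaner because the coupling term $\sum_{\ell=0}^{Q-1}\sumn\E[f_i(\xitl)-\uf_i\mid\cF_t]$ no longer feeds into the stochastic-gradient variance bound, thereby decoupling the client-drift recursion from the function-value recursion; either route delivers the stated result.
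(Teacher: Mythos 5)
Your proposal is correct and matches the paper exactly: the paper states that Corollary~\ref{cor:Ceq0} ``directly follows from the derivation of Theorem~\ref{thm:complexity} by taking $C=0$,'' which is precisely your substitution of $C=0$ into \eqref{eq:alphat}, \eqref{eq:minE}, and \eqref{eq:alphat_gamma}, with the constraints $1/C$ and $1/(2\sqrt{2QCL})$ becoming vacuous. Your extra check that the remaining stepsize bound $(42Q^3L^3T)^{-1/3}$ still enforces the conditions needed in Lemmas~\ref{lem:local_x} and \ref{lem:combined} (since $42^{1/3}>2\sqrt{3}$) is a detail the paper leaves implicit but is handled correctly.
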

             The result of Corollary \ref{cor:Ceq0} directly follows from the derivation of Theorem \ref{thm:complexity} by taking $C=0$. We can see that the influence of the term $(\uf - \frac{1}{n}\sumn \uf_i)$ measuring data heterogeneity disappears from the dominating term $\cO(1/\sqrt{nT})$. {Such an observation implies that assuming bounded variance on the stochastic gradients corresponds to an analysis that largely ignores data heterogeneity. By contrast, the more relaxed conditions specify the impact of data heterogeneity on the algorithmic performance and recovers the complexity of FedAvg when $C=0$.}
            
                The complexity results above assume a constant stepsize depending on $T$. When the stepsize policy satisfies a certain diminishing condition instead, Theorem \ref{thm:asy} states the asymptotic convergence result of $\normi{\nabla f(x_t)}$ and $\E[\normi{\nabla f(x_t)}]$, which serves as a complement of the complexity result in Theorem \ref{thm:complexity}. It provides a guarantee for FedAvg to output the last iterate for large $T$. 
            
                \begin{theorem}
                    \label{thm:asy}
                    Let Assumptions \ref{as:abc} and \ref{as:smooth} hold, and let the stepsize policy $\crk{\alpha_t}_{t\geq 0}$ satisfy
                    \begin{align*}
                        &\alpha_t\leq \min\crk{\frac{1}{C},\frac{1}{2QL\sqrt{3}}, \frac{1}{2\sqrt{2QCL}}},\; \forall t\\
                        &\sum_{t=0}^{\infty}\alpha_t = \infty, \ \sum_{t=0}^\infty \alpha_t^2 <\infty.
                    \end{align*}
                    Then, $\lim_{t\rightarrow\infty}\E[\normi{\nabla f(x_t)}] = 0$, and $\lim_{t\rightarrow\infty}\normi{\nabla f(x_t)} = 0$ almost surely.
                \end{theorem}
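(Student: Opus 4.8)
The plan is to condense the whole analysis into a single conditional one-step inequality of Robbins--Siegmund type and then run the standard supermartingale argument, exactly along the lines of Theorem~2.1 in \cite{li2022unified}. First I would apply the descent lemma \eqref{eq:descent_lemma} to $f$ (which is $L$-smooth by Assumption~\ref{as:smooth}, being an average of $L$-smooth functions) along the server update \eqref{eq:xt}, then take $\condE{\cdot}{\cF_t}$ and use unbiasedness together with the tower property over the nested filtration $\cF_t^0\subset\cdots\subset\cF_t^{Q-1}$. The cross term becomes $-\alpha_t\inpro{\nabla f(x_t),\frac1n\sumn\sum_{\ell=0}^{Q-1}\condE{\nabla f_i(\xitl)}{\cF_t}}$; writing $\nabla f_i(\xitl)=\nabla f_i(x_t)+\prt{\nabla f_i(\xitl)-\nabla f_i(x_t)}$ isolates the main descent term $-\alpha_t Q\norm{\nabla f(x_t)}^2$ (with a harmless $1/2$ factor after Young's inequality) plus a client-drift remainder controlled by $L$-smoothness in terms of $\condE{\norm{\xitl-x_t}^2}{\cF_t}$, while the quadratic term $\tfrac{L\alpha_t^2}{2}\condE{\norm{\cdot}^2}{\cF_t}$ is bounded via the ABC condition \eqref{eq:abc} --- which is where the local potential $P_t:=\sum_{\ell=0}^{Q-1}\sumn\condE{f_i(\xitl)-\uf_i}{\cF_t}$ appears.

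Next I would control the client drift by unrolling \eqref{eq:xitl_xt} and applying \eqref{eq:abc} along the local path, obtaining $\condE{\norm{\xitl-x_t}^2}{\cF_t}\lesssim\alpha_t^2 Q\,(\text{noise}+\text{gradient content along the path})$, and I would bound $P_t$ itself in terms of $f(x_t)-\uf$, $\sigfn$, and $\norm{\nabla f(x_t)}^2$ by combining \eqref{eq:nf}, Lemma~\ref{lem:ngrad}, and $\uf\ge\tfrac1n\sumn\uf_i$ (noting $\sumn(f_i(x_t)-\uf_i)=n(f(x_t)-\uf)+n\sigfn$). Since the drift bound feeds into $P_t$ and vice versa, the stepsize caps $\alpha_t\le\min\crk{1/C,\ 1/(2QL\sqrt3),\ 1/(2\sqrt{2QCL})}$ are used to make this self-referential system absorb into itself; after rearranging one arrives, for all $t$, at a bound of the form
\begin{equation*}
\condE{f(x_{t+1})-\uf}{\cF_t}\le\prt{1+c_1\alpha_t^2}\prt{f(x_t)-\uf}-c_2\alpha_t Q\norm{\nabla f(x_t)}^2+c_3\alpha_t^2\prt{D+\sigfn},
\end{equation*}
with $c_1,c_2,c_3>0$ depending only on $L,C,Q$. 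Because $\sum_t\alpha_t^2<\infty$, both $\sum_t c_1\alpha_t^2<\infty$ and $\sum_t c_3\alpha_t^2(D+\sigfn)<\infty$, so the Robbins--Siegmund theorem applies to the nonnegative sequence $f(x_t)-\uf$: it converges a.s.\ to a finite limit, $\sum_t\alpha_t Q\norm{\nabla f(x_t)}^2<\infty$ a.s., and (after taking total expectations and summing) $\sum_t\alpha_t\E\norm{\nabla f(x_t)}^2<\infty$ with $\sup_t\E[f(x_t)-\uf]<\infty$.

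From $\sum_t\alpha_t=\infty$ and $\sum_t\alpha_t\norm{\nabla f(x_t)}^2<\infty$ a.s.\ I would conclude $\liminf_t\norm{\nabla f(x_t)}=0$ a.s., and then upgrade to $\lim_t\norm{\nabla f(x_t)}=0$ a.s.\ by the usual ``no persistent upcrossings'' argument: $\bigl|\,\norm{\nabla f(x_{t+1})}-\norm{\nabla f(x_t)}\,\bigr|\le L\norm{x_{t+1}-x_t}$ and $\norm{x_{t+1}-x_t}\le\alpha_t Q\prt{\norm{\nabla f(x_t)}+\text{bounded noise}}\to 0$ a.s.\ (using a.s.\ boundedness of $f(x_t)-\uf$), so if $\limsup_t\norm{\nabla f(x_t)}>0$ on a positive-probability event, the gradient norm would have to make infinitely many slow crossings from $\le\epsilon$ to $\ge2\epsilon$, forcing $\sum_t\alpha_t\norm{\nabla f(x_t)}^2=\infty$ there --- a contradiction. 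Finally, $\E\norm{\nabla f(x_t)}\to 0$ follows from the a.s.\ convergence plus uniform integrability: by \eqref{eq:nf} applied to $f$ we have $\norm{\nabla f(x_t)}^2\le 2L\prt{f(x_t)-\uf}$, and $\sup_t\E[f(x_t)-\uf]<\infty$ bounds $\crk{\norm{\nabla f(x_t)}}$ in $L^2$.

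I expect the main obstacle to be the second step --- producing the clean Robbins--Siegmund recursion despite the distributed structure. Unlike the single-machine ABC analyses of \cite{li2022unified,khaled2020better,lei2019stochastic}, here \eqref{eq:abc} is invoked at the local iterates $\xitl$, generating the potential $P_t$ that must be transferred to the server potential $f(x_t)-\uf$ while simultaneously picking up the heterogeneity term $\sigfn$ through Lemma~\ref{lem:ngrad}; the client-drift and local-potential estimates are mutually coupled, and untangling them using only the $T$-independent stepsize caps above --- rather than the horizon-dependent choice \eqref{eq:alphat} available to Theorem~\ref{thm:complexity} --- so that the multiplicative and additive error terms come out at order $\alpha_t^2$ (hence summable), is the delicate part. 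Everything after the recursion is the standard supermartingale/liminf-to-limit route, which is precisely what Theorem~2.1 of \cite{li2022unified} packages.
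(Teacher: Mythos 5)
Your proposal follows essentially the same route as the paper: the conditional recursion you aim for is exactly Lemma \ref{lem:combined} (obtained by combining Lemmas \ref{lem:descent}--\ref{lem:local_x}, with the client-drift and local-potential bounds absorbed via the same $T$-independent stepsize caps), and the supermartingale argument plus the liminf-to-lim upgrade is precisely what the paper delegates to Theorem~2.1 of \cite{li2022unified}, verified there through the decomposition $x_{t+1}=x_t+\alpha_t\bs{B}_t-\alpha_t\bs{A}_t$ and the bounds \eqref{eq:fi_finite}--\eqref{eq:verify_P3}. One small imprecision: under Assumption \ref{as:abc} the gradient noise is \emph{not} almost surely bounded (only its conditional second moment is controlled by $C\brki{f_i(\xitl)-\uf_i}+D$), so in your upcrossing step the claim $\normi{x_{t+1}-x_t}\rightarrow 0$ a.s.\ must come from $\sum_t\E\brki{\normi{x_{t+1}-x_t}^2}<\infty$ (via \eqref{eq:xtd} and $\sum_t\alpha_t^2<\infty$) and the a.s.\ convergence of the noise martingale, rather than from a pointwise bound---which is exactly what the cited theorem's martingale-plus-drift conditions encode.
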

                \begin{proof}
                    The detailed proof can be found in Appendix \ref{app:thm_asy}. The proof is to verify the conditions of Theorem 2.1 in \cite{li2022unified} by noting:
                    \begin{equation}
                        \label{eq:xtd}
                        \begin{aligned}
                        &\E\brk{\norm{x_{t + 1} - x_t}^2} \leq \frac{\alpha_t^2QC}{n}\sum_{\ell=0}^{Q-1}\sumn \E\brk{f_i(\xitl) - \uf_i}\\
                        &\quad + \frac{\alpha_t^2 Q}{n}\sum_{\ell=0}^{Q-1}\sumn \E\brk{\norm{\nabla f_i(\xitl)}^2} + \alpha_t^2Q^2 D.
                    \end{aligned}
                    \end{equation}
                \end{proof}

                The remaining parts of this section introduce the supporting lemmas for proving Theorem \ref{thm:complexity} and Theorem \ref{thm:asy}. In particular, we start with Lemma \ref{lem:descent} which provides the intuition on why considering bounded variance may partially neglect the influence of data heterogeneity. Lemma \ref{lem:descent} also guides us to consider the recursions of the corresponding terms in Subsection \ref{subsec:supporting}.
            
                \subsection{Descent Lemma and Data Heterogeneity}
                \label{subsec:descent}
                
                Lemma \ref{lem:descent} shows the one-step improvement of the function value in light of the descent lemma and Assumption \ref{as:abc}. 
                 \begin{lemma}
                    \label{lem:descent}
                    Suppose Assumptions \ref{as:abc} and \ref{as:smooth} hold. Let $\alpha_t\leq 1/(QL)$. Then,
                    \begin{align}
                        &\condE{f(x_{t + 1}) - \uf}{\cF_t} \leq f(x_t) - \uf - \frac{\alpha_tQ}{2}\norm{\nabla f(x_t)}^2 \label{inequality:descent}\\
                        &\quad + \frac{\alpha_t L^2}{2n}\sum_{\ell=0}^{Q-1}\sumn\condE{\norm{x_t - \xitl}^2}{\cF_t} \nonumber\\
                        &\quad + \frac{\alpha_t^2 CL}{2n^2}\sum_{\ell=0}^{Q-1}\sumn \condE{f_i(\xitl) - \uf_i}{\cF_t} + \frac{\alpha_t^2L QD}{2n}.\nonumber
                    \end{align}
                \end{lemma}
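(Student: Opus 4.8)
The plan is to start from the descent inequality \eqref{eq:descent_lemma} applied to $f$, which is $L$-smooth as the average of the $L$-smooth functions $f_i$: evaluating it at $x_{t+1}$ and $x_t$ and taking the conditional expectation given $\cF_t$ gives
\begin{equation*}
    \condE{f(x_{t+1})-\uf}{\cF_t} \le f(x_t)-\uf + \condE{\inpro{\nabla f(x_t),\, x_{t+1}-x_t}}{\cF_t} + \frac{L}{2}\condE{\norm{x_{t+1}-x_t}^2}{\cF_t},
\end{equation*}
after which I would bound the two conditional-expectation terms separately. From the update \eqref{eq:xt}, the unbiasedness in Assumption \ref{as:abc} applied level by level along $\cF_t=\cF_t^0\subseteq\cF_t^1\subseteq\cdots$, and $x_{i,t}^0=x_t$, one obtains $\condE{x_{t+1}-x_t}{\cF_t} = -\tfrac{\alpha_t}{n}\sum_{\ell=0}^{Q-1}\sumn\condE{\nabla f_i(\xitl)}{\cF_t}$.

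For the first-order term, I would split $\nabla f_i(\xitl) = \nabla f_i(x_t) + \bigl(\nabla f_i(\xitl)-\nabla f_i(x_t)\bigr)$; averaging over $i$ turns the first piece into $\nabla f(x_t)$ and contributes $-\alpha_t Q\norm{\nabla f(x_t)}^2$, while Young's inequality $2\inpro{a,b}\le\norm{a}^2+\norm{b}^2$ combined with $L$-smoothness bounds the cross piece by $\tfrac12\norm{\nabla f(x_t)}^2+\tfrac{L^2}{2}\condE{\norm{\xitl-x_t}^2}{\cF_t}$ for each pair $(i,\ell)$. Summing over $\ell$ and collecting yields exactly $-\tfrac{\alpha_t Q}{2}\norm{\nabla f(x_t)}^2 + \tfrac{\alpha_t L^2}{2n}\sum_{\ell=0}^{Q-1}\sumn\condE{\norm{x_t-\xitl}^2}{\cF_t}$.

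The second-order term $\tfrac{L}{2}\condE{\norm{x_{t+1}-x_t}^2}{\cF_t}$ is the heart of the argument. I would write $x_{t+1}-x_t = -\tfrac{\alpha_t}{n}\sum_{\ell=0}^{Q-1}\sumn\nabla f_i(\xitl) - \tfrac{\alpha_t}{n}\sum_{\ell=0}^{Q-1}\sumn e_{i,t}^\ell$, with $e_{i,t}^\ell := g_i(\xitl;\xi_{i,t}^\ell)-\nabla f_i(\xitl)$, and treat the noise part and the gradient-flow part separately. For the noise part, the cross-agent independence in Assumption \ref{as:abc} together with the martingale-difference structure of $\{e_{i,t}^\ell\}_\ell$ along $\{\cF_t^\ell\}_\ell$ makes every cross term vanish, so its conditional second moment equals $\tfrac{\alpha_t^2}{n^2}\sum_{\ell=0}^{Q-1}\sumn\condE{\norm{e_{i,t}^\ell}^2}{\cF_t}$; applying Assumption \ref{as:abc} conditionally on $\cF_t^\ell$ and then the tower property bounds this by $\tfrac{\alpha_t^2 C}{n^2}\sum_{\ell=0}^{Q-1}\sumn\condE{f_i(\xitl)-\uf_i}{\cF_t}+\tfrac{\alpha_t^2 QD}{n}$, which after multiplication by $L/2$ gives precisely the last two terms of \eqref{inequality:descent}. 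The gradient-flow part, after re-centering at $\nabla f(x_t)$ and using $L$-smoothness, contributes only $\order{\alpha_t^2 Q^2}$-scale multiples of $\norm{\nabla f(x_t)}^2$ and $\order{\alpha_t^2 Q/n}$-scale multiples of $\sum_{\ell=0}^{Q-1}\sumn\condE{\norm{x_t-\xitl}^2}{\cF_t}$, which — using $\alpha_t\le 1/(QL)$ and, where needed, \eqref{eq:nf} to convert residual $\norm{\nabla f_i(\xitl)}^2$ into $f_i(\xitl)-\uf_i$ — are folded back into the two terms produced above; the remaining cross term coupling noise and gradient flow is of still higher order in $\alpha_t$ and is absorbed in the same way.

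The step I expect to be the main obstacle is the sharp bound on $\condE{\norm{x_{t+1}-x_t}^2}{\cF_t}$: obtaining the $1/n^2$ factor on the heterogeneity term (instead of the crude $1/n$ that $\norm{\sum_{k=1}^{nQ}a_k}^2\le nQ\sum_k\norm{a_k}^2$ would give) requires exploiting the cross-agent independence and the sequential dependence of the local iterates at the same time, and one must then verify carefully that every residual mean-flow and cross contribution is genuinely dominated by the descent inequality's $-\tfrac{\alpha_t Q}{2}\norm{\nabla f(x_t)}^2$ and consensus-error terms under the single stepsize restriction $\alpha_t\le 1/(QL)$.
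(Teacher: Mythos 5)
Your overall architecture matches the paper's: the descent lemma for $f$, the level-by-level martingale argument plus cross-agent independence that yields the $1/n^2$ factor on the variance term, and Assumption \ref{as:abc} to produce the last two terms of \eqref{inequality:descent} are all exactly what the paper does, and your treatment of the noise part of $\frac{L}{2}\condE{\norm{x_{t+1}-x_t}^2}{\cF_t}$ is correct. The genuine gap is in how you dispose of the \emph{mean-flow} part of that quadratic term. By bounding the first-order term with the one-sided Young inequality you discard the quantity $-\frac{\alpha_tQ}{2}\condEi{\normi{\frac{1}{nQ}\sumn\sum_{\ell}\nabla f_i(\xitl)}^2}{\cF_t}$ that the exact identity $-\inpro{a,b}=\frac12\norm{a-b}^2-\frac12\norm{a}^2-\frac12\norm{b}^2$ provides. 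The paper keeps precisely this term and uses it to cancel $\frac{\alpha_t^2L}{2}\condEi{\normi{\frac1n\sumn\sum_\ell\nabla f_i(\xitl)}^2}{\cF_t}$ with no triangle-inequality loss; the condition $\alpha_t\le 1/(QL)$ enters only there, via $\frac{\alpha_t^2LQ^2}{2}\le\frac{\alpha_tQ}{2}$. Your substitute --- re-expanding the mean flow around $\nabla f(x_t)$ --- costs a factor of $2$: $\normi{\sum_{i,\ell}\nabla f_i(\xitl)}^2\le 2n^2Q^2\norm{\nabla f(x_t)}^2+2nQL^2\sum_{i,\ell}\norm{\xitl-x_t}^2$, so after multiplying by $\frac{\alpha_t^2L}{2n^2}$ you pick up $+\alpha_t^2LQ^2\norm{\nabla f(x_t)}^2$, which under $\alpha_t\le 1/(QL)$ is only bounded by $+\alpha_tQ\norm{\nabla f(x_t)}^2$. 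Having already spent $\frac{\alpha_tQ}{2}\norm{\nabla f(x_t)}^2$ of the $-\alpha_tQ\norm{\nabla f(x_t)}^2$ budget on the first-order cross term, the net coefficient becomes $+\frac{\alpha_tQ}{2}$ rather than $-\frac{\alpha_tQ}{2}$, and the coefficient of $\sum_{i,\ell}\condEi{\norm{x_t-\xitl}^2}{\cF_t}$ likewise inflates to $\frac{3\alpha_tL^2}{2n}$. So the ``folded back into the two terms produced above'' step fails: these contributions are of the \emph{same} order as the terms meant to absorb them, not higher order, and the lemma's stated constants cannot be recovered under $\alpha_t\le 1/(QL)$ (at best a weaker inequality under a more restrictive stepsize such as $\alpha_t\le 1/(4QL)$).

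A secondary remark: your claim that the cross term coupling the noise and the mean flow inside $\condE{\norm{x_{t+1}-x_t}^2}{\cF_t}$ is ``of still higher order'' is not justified --- for the same agent, $\nabla f_i(x_{i,t}^r)$ with $r>\ell$ depends on $g_{i,t}^\ell$, so this term does not vanish and by Cauchy--Schwarz it is of the same order as the two squared terms. The paper's passage from $\condEi{\normi{\frac1n\sumn\sum_\ell g_{i,t}^\ell}^2}{\cF_t}$ to the centered quadratic implicitly relies on the same point, so this is a shared rather than a new weakness, but it is not something you can wave away as higher order in $\alpha_t$.
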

                 \begin{proof}
                    See Appendix \ref{app:lem_descent}.
                \end{proof}

             It can be seen from the above descent lemma that the performance of FedAvg is affected by the data heterogeneity among the agents.
             {Specifically, the data heterogeneity comes into effect thanks to two factors: a) the inconsistency among different agents due to local updates; b) the noisy gradients of different agents.
                On one hand, note that when $Q>1$, the term $\E[\normi{x_t - x_{i,t}^\ell}^2| \cF_t]$ in \eqref{inequality:descent} is active. In light of Lemma \ref{lem:descent} and \eqref{eq:xitl_xt}, the upper bound on $\E[\normi{x_t - x_{i,t}^\ell}^2| \cF_t]$ relates to $\sumn\normi{\nabla f_i(\xitl)}^2$, which is non-zero even if $\xitl=x^*,\forall i$ because of data heterogeneity. This is the case even when full gradients are available (see Lemma \ref{lem:local_x} for details). 
                On the other hand, when $C>0$ which implies noisy gradients, the term $\sumn \condEi{f_i(\xitl) - \uf_i}{\cF_t}$ is active which also relates to the measure of data heterogeneity. Such a result also explains why assuming $C=0$ in Assumption \ref{as:abc} or $\eta^2 =0$ in condition \eqref{eq:intro_bv1} is not preferred since partial influence of the data heterogeneity would be ignored.}
            
                \begin{remark}
                    The last two terms in Lemma \ref{lem:descent} are related to Assumption \ref{as:abc}. By comparison, the more restrictive condition \eqref{eq:intro_bv1} yields the term $\frac{\alpha_t^2 L\eta^2 }{2n^2}\sum_{\ell=0}^{Q-1}\sum_{i=1}^n\normi{\nabla f_i(\xitl)}^2 + \frac{\alpha_t^2L Q\sigma^2}{2n}$. 
                \end{remark}

                \subsection{Supporting Lemmas}
                \label{subsec:supporting}
                
                In this section, we present the supporting lemmas that enable us to derive the complexity result for FedAvg in Theorem \ref{thm:complexity} and establish the almost sure convergence in Theorem \ref{thm:asy}. In lighf of Lemma \ref{lem:descent}, we derive Lemma \ref{lem:local_fi} and Lemma \ref{lem:local_x} to estimate the corresponding terms in Lemma \ref{lem:descent}. It is noteworthy that the result of Lemma \ref{lem:local_fi} comes from the ABC assumption \ref{as:abc} under the distributed setting and imposes additional technical challenges to the analysis.  

                By combining Lemmas \ref{lem:descent}-\ref{lem:local_x}, we obtain Lemma \ref{lem:combined}, which constructs the celebrated recursion appearing in the Supermartingale Convergence Theorem \cite{ROBBINS1971233}. Lemma \ref{lem:combined} constitutes the key ingredient for deriving the complexity result as stated in Theorem \ref{thm:complexity}, in light of Lemma \ref{lem:min}. Furthermore, it serves as the starting point in establishing the almost sure convergence as stated in Theorem \ref{thm:asy}.
                
                We start from presenting Lemma \ref{lem:local_fi}, which is derived from the $L$-smoothness of $f_i$ and the relationship described in \eqref{eq:xitl_xt}.

                \begin{lemma}
                    \label{lem:local_fi}
                    Let Assumptions \ref{as:abc} and \ref{as:smooth} hold. 
                    We have 
                    \begin{equation}
                        \label{eq:local_fi}
                        \begin{aligned}
                            &\sumn\sum_{\ell=0}^{Q-1}\brk{f_i(x_{i,t}^\ell) - \uf_i} \leq 2nQ\brk{f(x_t) - f^*}\\
                            &\quad + 2nQ\sigfn + L\sumn\sum_{\ell=0}^{Q-1}\norm{x_{i,t}^\ell - x_t}^2.
                        \end{aligned}
                    \end{equation}
                \end{lemma}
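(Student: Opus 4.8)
The plan is to control each summand $f_i(x_{i,t}^\ell) - f_i^*$ by a quantity at the server iterate $x_t$ plus the local drift $\|x_{i,t}^\ell - x_t\|^2$, and then sum. I would start from the descent lemma \eqref{eq:descent_lemma} applied to $f_i$ with the pair $(x_t, x_{i,t}^\ell)$, which yields
\begin{align*}
    f_i(x_{i,t}^\ell) - f_i^* \leq \big(f_i(x_t) - f_i^*\big) + \inpro{\nabla f_i(x_t), x_{i,t}^\ell - x_t} + \frac{L}{2}\norm{x_{i,t}^\ell - x_t}^2 .
\end{align*}
For the inner-product term I would apply Young's inequality $\inpro{a,b}\leq \frac{1}{2L}\norm{a}^2 + \frac{L}{2}\norm{b}^2$ with $a = \nabla f_i(x_t)$ and $b = x_{i,t}^\ell - x_t$, and then invoke the smoothness consequence \eqref{eq:nf}, i.e. $\norm{\nabla f_i(x_t)}^2 \leq 2L(f_i(x_t) - f_i^*)$, so that $\frac{1}{2L}\norm{\nabla f_i(x_t)}^2 \leq f_i(x_t) - f_i^*$. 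Combining these gives the per-term bound
\begin{align*}
    f_i(x_{i,t}^\ell) - f_i^* \leq 2\big(f_i(x_t) - f_i^*\big) + L\norm{x_{i,t}^\ell - x_t}^2 .
\end{align*}

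Next I would sum this over $\ell \in \crk{0,\dots,Q-1}$ and $i\in[n]$, producing $2Q\sumn (f_i(x_t) - f_i^*) + L\sumn\sum_{\ell=0}^{Q-1}\norm{x_{i,t}^\ell - x_t}^2$ on the right-hand side. The final step is the bookkeeping identity
\begin{align*}
    \sumn\big(f_i(x_t) - f_i^*\big) = n f(x_t) - \sumn f_i^* = n\big(f(x_t) - \uf\big) + n\Big(\uf - \tfrac{1}{n}\sumn f_i^*\Big) = n\big(f(x_t) - \uf\big) + n\,\sigfn,
\end{align*}
which uses only the definitions $f = \frac1n\sumn f_i$ and $\sigfn = \uf - \frac1n\sumn \uf_i$. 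Substituting yields exactly \eqref{eq:local_fi}.

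There is no real obstacle here: the argument is a direct application of $L$-smoothness plus one Young's inequality. The only point requiring a little care is choosing the Young's-inequality constant to be precisely $L$ so that the gradient-norm term is absorbed cleanly by \eqref{eq:nf} into a factor $(f_i(x_t)-f_i^*)$ and the quadratic terms combine to exactly $L\norm{x_{i,t}^\ell - x_t}^2$; a different constant would give worse numerical factors but the same structure. (Note the relation \eqref{eq:xitl_xt} is not actually needed for this particular bound, since the drift $\norm{x_{i,t}^\ell - x_t}^2$ is kept as is and estimated separately in Lemma \ref{lem:local_x}.)
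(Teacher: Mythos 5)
Your proposal is correct and matches the paper's own proof essentially line for line: the descent lemma applied to the pair $(x_t, x_{i,t}^\ell)$, Young's inequality with constant $L$ on the cross term, absorption of $\frac{1}{2L}\norm{\nabla f_i(x_t)}^2$ via \eqref{eq:nf}, and summation over $i$ and $\ell$ with the identity $\sumn(f_i(x_t)-\uf_i) = n(f(x_t)-\uf) + n\sigfn$. No differences worth noting.
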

                \begin{proof}
                    See Appendix \ref{app:lem_local_fi}
                \end{proof}
                
                Then, Lemma \ref{lem:local_x} is derived by noting the relationship stated in \eqref{eq:xitl_xt}.
                \begin{lemma}
                    \label{lem:local_x}
                    Let Assumptions \ref{as:abc} and \ref{as:smooth} hold. Set the stepsize $\alpha_t \leq \min\crki{1/(QL\sqrt{12}), 1/\sqrt{8QCL}},\;\forall t$. We have 
                    \begin{equation}
                        \label{eq:local_x}
                        \begin{aligned}
                            &\sum_{\ell=0}^{Q-1}\sumn \condE{\norm{x_t - \xitl}^2}{\cF_t}\leq 6\alpha_t^2 Q^3 n\norm{\nabla f(x_t)}^2\\
                            &\quad + 4\alpha_t^2Q^3n(2C+3L)\brk{f(x_t) - \uf}+ 2\alpha_t^2Q^2nD \\
                            &\quad + 4\alpha_t^2Q^3n(3L+2C)\sigfn.
                        \end{aligned}
                    \end{equation}
                    
                \end{lemma}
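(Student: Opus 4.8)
The plan is to unroll the local recursion, split each local stochastic gradient into its conditional mean and a martingale-difference noise, and then close a self-referential inequality for the quantity $S:=\sum_{\ell=0}^{Q-1}\sumn\condE{\norm{x_t-\xitl}^2}{\cF_t}$.

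First, by \eqref{eq:xitl_xt} we have $x_t-\xitl=\alpha_t\sum_{j=0}^{\ell-1}g_i(x_{i,t}^j;\xi_{i,t}^j)$. Writing $g_i(x_{i,t}^j;\xi_{i,t}^j)=\nabla f_i(x_{i,t}^j)+\big(g_i(x_{i,t}^j;\xi_{i,t}^j)-\nabla f_i(x_{i,t}^j)\big)$ and splitting the squared norm into a deterministic and a stochastic part, I would apply the Cauchy--Schwarz inequality only to the sum of gradients (costing a factor $\ell\le Q$), while for the noise part I would use that $\{g_i(x_{i,t}^j;\xi_{i,t}^j)-\nabla f_i(x_{i,t}^j)\}_{j}$ is a martingale-difference sequence with respect to $\{\cF_t^j\}_{j}$, so the conditional cross-terms vanish and $\condE{\norm{\sum_{j<\ell}(g_i(x_{i,t}^j;\xi_{i,t}^j)-\nabla f_i(x_{i,t}^j))}^2}{\cF_t}=\sum_{j<\ell}\condE{\norm{g_i(x_{i,t}^j;\xi_{i,t}^j)-\nabla f_i(x_{i,t}^j)}^2}{\cF_t}$, with \emph{no} extra factor of $Q$. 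Applying Assumption~\ref{as:abc} to each noise term and summing over $\ell\in\{0,\dots,Q-1\}$ and $i\in[n]$ then bounds $S$ in terms of $\sum_{j}\sumn\condE{\norm{\nabla f_i(x_{i,t}^j)}^2}{\cF_t}$ and $\sum_{j}\sumn\condE{f_i(x_{i,t}^j)-\uf_i}{\cF_t}$, plus a term of order $\alpha_t^2Q^2nD$; this saved power of $Q$ in the noise sum is exactly why the $D$-term in \eqref{eq:local_x} carries $Q^2$ rather than $Q^3$.

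Next I would control the two remaining sums. For the gradient-norm sum, $L$-smoothness gives $\norm{\nabla f_i(x_{i,t}^j)}^2\le 2L^2\norm{x_{i,t}^j-x_t}^2+2\norm{\nabla f_i(x_t)}^2$, and then Lemma~\ref{lem:ngrad} together with $\norm{\nabla f_i(x_t)}^2\le 2\norm{\nabla f_i(x_t)-\nabla f(x_t)}^2+2\norm{\nabla f(x_t)}^2$ turns $\sum_i\norm{\nabla f_i(x_t)}^2$ into terms in $\norm{\nabla f(x_t)}^2$, $(f(x_t)-\uf)$, and $\sigfn$, while the $\norm{x_{i,t}^j-x_t}^2$ piece contributes another copy of $S$. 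For the function-value sum I would invoke Lemma~\ref{lem:local_fi} — a pathwise bound, hence it survives $\condE{\cdot}{\cF_t}$ — producing further $(f(x_t)-\uf)$ and $\sigfn$ terms plus yet another copy of $S$, now with an extra factor $L$. Assembling these gives an inequality of the form $S\le\big(2\alpha_t^2L^2Q^2+2\alpha_t^2CQL\big)S+(\text{the right-hand side of }\eqref{eq:local_x})$.

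Finally, the stepsize restrictions $\alpha_t\le 1/(QL\sqrt{12})$ and $\alpha_t\le 1/\sqrt{8QCL}$ make the coefficient of $S$ at most $\tfrac16+\tfrac14\le\tfrac12$, so $S$ is bounded by twice the remaining right-hand side; bounding $Q^2\le Q^3$ where needed and rounding the numerical constants up then yields precisely \eqref{eq:local_x}. I expect the main obstacle to be the self-referential structure: $\norm{x_{i,t}^j-x_t}^2$ re-enters both directly (through the smoothness of $\nabla f_i$) and indirectly (through Lemma~\ref{lem:local_fi}), so one must carefully track how many powers of $Q$ each route contributes and verify that the stepsize conditions genuinely drive the aggregated coefficient of $S$ strictly below $1$.
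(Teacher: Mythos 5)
Your proposal follows essentially the same route as the paper's proof: unroll \eqref{eq:xitl_xt}, separate the martingale-difference noise (whose cross-terms vanish, so it carries no extra factor of $Q$ and is bounded via Assumption \ref{as:abc}) from the drift term (Cauchy--Schwarz, costing a factor $Q$), control the drift using smoothness, Lemma \ref{lem:ngrad} and Lemma \ref{lem:local_fi}, and close the resulting self-referential inequality for $S$ via the stepsize condition. The only discrepancy is cosmetic: your nested two-way splits place a coefficient $4$ on $\norm{\nabla f(x_t)}^2$ where the paper's single three-way decomposition of $\nabla f_i(\xitl)$ into $(\nabla f_i(\xitl)-\nabla f_i(x_t))+(\nabla f_i(x_t)-\nabla f(x_t))+\nabla f(x_t)$ gives $3$, so after dividing by the aggregated coefficient of $S$ your constant on that term lands slightly above $6\alpha_t^2Q^3n$; using the three-term split recovers the stated constants exactly.
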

                \begin{proof}
                    See Appendix \ref{app:lem_local_x}.
                \end{proof}

                By combining Lemmas \ref{lem:descent}-\ref{lem:local_x}, we are able to derive the recursion of $\condEi{f(x_{t + 1}) - f^*}{\cF_t}$, as described in Lemma \ref{lem:combined}.

                \begin{lemma}
                    \label{lem:combined}
                    Let Assumptions \ref{as:abc} and \ref{as:smooth} hold. Set the stepsize $\alpha_t$ to satisfy 
                    \begin{align*}
                        \alpha_t\leq \min\crk{\frac{1}{C},\frac{1}{2QL\sqrt{3}}, \frac{1}{2\sqrt{2QCL}}, \frac{1}{QL}}, \; \forall t.
                    \end{align*}
                    
                    Then,
                    \begin{equation}
                        \label{eq:re_f}
                    \begin{aligned}
                        &\condE{f(x_{t + 1}) - \uf}{\cF_t}\leq \prt{\frac{1}{2nQ} + \alpha_t L}\alpha_t^2 Q^2 LD \\
                        & + \brk{1 + 2\alpha_t^3Q^3L^2(2C+3L)+ \frac{\alpha_t^2 Q CL}{n}}\brk{f(x_t) - \uf}\\
                        & + \alpha_t^2 LQ^2\brk{\frac{C}{nQ} + 2\alpha_tQL(3L+2C)}\sigfn - \frac{\alpha_tQ}{4}\norm{\nabla f(x_t)}^2.
                    \end{aligned}
                    \end{equation}
                    
                \end{lemma}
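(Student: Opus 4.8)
The plan is to substitute Lemma \ref{lem:local_fi} and Lemma \ref{lem:local_x} into the descent inequality \eqref{inequality:descent} and then regroup the right-hand side according to the quantities $\norm{\nabla f(x_t)}^2$, $f(x_t) - \uf$, $\sigfn$, and $D$. First I would take $\condE{\cdot}{\cF_t}$ on both sides of Lemma \ref{lem:local_fi} — legitimate because $x_t$, hence $f(x_t) - f^*$, is $\cF_t$-measurable — and use it to replace the term $\sum_{\ell=0}^{Q-1}\sumn\condE{f_i(\xitl) - \uf_i}{\cF_t}$ in \eqref{inequality:descent} by $2nQ\brk{f(x_t) - \uf} + 2nQ\sigfn + L\sum_{\ell=0}^{Q-1}\sumn\condE{\norm{x_{i,t}^\ell - x_t}^2}{\cF_t}$ (recalling $f^* = \uf$ in the present notation). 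After this step the right-hand side contains only the drift term $\sum_{\ell=0}^{Q-1}\sumn\condE{\norm{x_t - \xitl}^2}{\cF_t}$, now carrying the coefficient $\frac{\alpha_t L^2}{2n} + \frac{\alpha_t^2 CL^2}{2n^2}$ rather than $\frac{\alpha_t L^2}{2n}$; the bounds $\alpha_t \le 1/C$ and $\alpha_t \le 1/(2\sqrt{2QCL})$ serve to keep this extra piece controlled.

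Next I would invoke Lemma \ref{lem:local_x} to bound the drift term, which re-expresses it as an $\order{\alpha_t^2 Q^3 n}$ multiple of $\norm{\nabla f(x_t)}^2$ together with comparable multiples of $f(x_t) - \uf$, $D$, and $\sigfn$. Substituting this back introduces a mild circularity — bounding the drift reproduces a positive $\norm{\nabla f(x_t)}^2$ contribution — and the crucial point is that this contribution, of order $\alpha_t^3 Q^3 L^2\norm{\nabla f(x_t)}^2$ (plus a smaller cross term involving $C$), is dominated by the negative $-\frac{\alpha_t Q}{2}\norm{\nabla f(x_t)}^2$ already present in \eqref{inequality:descent}. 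This is precisely why the statement imposes $\alpha_t \le 1/(2QL\sqrt{3})$ and $\alpha_t \le 1/(2\sqrt{2QCL})$: after absorption one is left with the clean $-\frac{\alpha_t Q}{4}\norm{\nabla f(x_t)}^2$ of \eqref{eq:re_f}. The requirement $\alpha_t \le 1/(QL)$ is inherited from Lemma \ref{lem:descent}, while $\alpha_t \le 1/(QL\sqrt{12})$ and $\alpha_t \le 1/\sqrt{8QCL}$ — i.e. the same two listed bounds — are exactly what Lemma \ref{lem:local_x} needs.

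Finally I would collect the surviving coefficients. The factor multiplying $f(x_t) - \uf$ aggregates the leading $1$ from \eqref{inequality:descent}, the $\frac{\alpha_t^2 QCL}{n}$ produced by the Lemma \ref{lem:local_fi} substitution, and the $\order{\alpha_t^3 Q^3 L^2(2C+3L)}$ produced by Lemma \ref{lem:local_x}, matching the bracket $\brk{1 + 2\alpha_t^3 Q^3 L^2(2C+3L) + \alpha_t^2 QCL/n}$ after consolidating constants with $\alpha_t \le 1/C$ and $n \ge 1$; the $\sigfn$-coefficient likewise combines the $\frac{\alpha_t^2 QCL}{n}$ contribution (the $2nQ\sigfn$ of Lemma \ref{lem:local_fi} scaled by $\frac{\alpha_t^2 CL}{2n^2}$) with the $\order{\alpha_t^3 Q^3 L^2(3L+2C)}$ contribution of Lemma \ref{lem:local_x}; and the $D$-terms combine into $\prt{\frac{1}{2nQ} + \alpha_t L}\alpha_t^2 Q^2 LD$. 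I expect the main obstacle to be organizational rather than conceptual: tracking the $Q$-, $n$- and $\alpha_t$-powers across the three lemmas and checking that each of the four stepsize hypotheses in the statement is exactly what one absorption requires — in particular the interplay between $1/(2QL\sqrt 3)$ and $1/(2\sqrt{2QCL})$ when the cross term $\frac{\alpha_t^2 CL^2}{2n^2}\sum_{\ell=0}^{Q-1}\sumn\condE{\norm{x_t - \xitl}^2}{\cF_t}$ is handled.
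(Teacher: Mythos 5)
Your proposal matches the paper's proof, which is exactly this one-line argument: impose $\alpha_t\leq 1/C$, substitute \eqref{eq:local_fi} and \eqref{eq:local_x} into \eqref{inequality:descent}, and collect coefficients. The extra bookkeeping you describe --- the cross term $\frac{\alpha_t^2 CL^2}{2n^2}\sum_{\ell=0}^{Q-1}\sumn\condE{\norm{x_t-\xitl}^2}{\cF_t}$ generated by the Lemma \ref{lem:local_fi} substitution and the matching of each stepsize hypothesis to one absorption --- is precisely the detail the paper leaves implicit.
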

                \begin{proof}
                    Let $\alpha_t\leq 1/C$. Substituting \eqref{eq:local_fi} and \eqref{eq:local_x} into \eqref{inequality:descent} leads to the desired result.
                \end{proof}

                \begin{lemma}[Lemma 6, \cite{mishchenko2020random}]
                    \label{lem:min}
                    Suppose that there exist constants $a, b, c\geq 0$ and nonnegative sequences $\crk{s_t}_{t=0}^T$, $\crk{q_t}_{t=0}^T$ such that for any $t$ satisfying $0\leq t\leq T$, we have the recursion $s_{t + 1}\leq (1 + a)s_t -bq_t + c$.
                    Then, the following holds: 
                    \begin{align*}
                        \min_{t=0,1,\cdots, T-1} q_t \leq \frac{(1 + a)^T}{bT}s_0 + \frac{c}{b}.
                    \end{align*}
                \end{lemma}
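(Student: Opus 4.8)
The plan is to unroll the one-step recursion with geometrically decaying weights and then convert the resulting weighted average of the $q_t$'s into a bound on their minimum.

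First I would rewrite the hypothesis $s_{t+1}\leq(1+a)s_t-bq_t+c$ as $bq_t\leq(1+a)s_t-s_{t+1}+c$ for $t=0,1,\cdots,T-1$. To telescope across the factor $(1+a)$, I divide through by $(1+a)^{t+1}$, which gives $\frac{s_{t+1}}{(1+a)^{t+1}}\leq\frac{s_t}{(1+a)^t}-\frac{bq_t}{(1+a)^{t+1}}+\frac{c}{(1+a)^{t+1}}$. Summing over $t=0,\cdots,T-1$ and dropping the nonnegative term $s_T/(1+a)^T$ on the left yields $\sum_{t=0}^{T-1}\frac{bq_t}{(1+a)^{t+1}}\leq s_0+cS$, where $S:=\sum_{t=0}^{T-1}(1+a)^{-(t+1)}$.

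Next I would pass from this weighted sum to the minimum: since every $q_t\geq0$ and every weight $(1+a)^{-(t+1)}$ is positive, $bS\min_{0\leq t\leq T-1}q_t\leq\sum_{t=0}^{T-1}\frac{bq_t}{(1+a)^{t+1}}\leq s_0+cS$, hence $\min_t q_t\leq\frac{s_0}{bS}+\frac{c}{b}$. The final step is to lower bound $S$. Evaluating the geometric sum gives $S=\frac{1-(1+a)^{-T}}{a}$ (and $S=T$ when $a=0$), so $\frac{1}{S}=\frac{a(1+a)^T}{(1+a)^T-1}$, and the binomial/Bernoulli inequality $(1+a)^T\geq1+aT$ gives $(1+a)^T-1\geq aT$, whence $\frac{1}{S}\leq\frac{(1+a)^T}{T}$. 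Substituting this in produces exactly $\min_t q_t\leq\frac{(1+a)^T}{bT}s_0+\frac{c}{b}$.

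The computation is elementary throughout; the only subtlety — and the step I would be most careful about — is the weighting. A plain telescoping without the $(1+a)^{-(t+1)}$ factors, or replacing $\sum_{t}(1+a)^{-(t+1)}$ by the crude bound $T$ on the right-hand side, leaves a spurious extra factor of $(1+a)^T$ in front of $c/b$; keeping the exact value of $S$ and exploiting the cancellation $\frac{s_0+cS}{bS}=\frac{s_0}{bS}+\frac{c}{b}$ is what isolates the clean additive constant. I would also note that the statement tacitly assumes $b>0$ and $T\geq1$, and that the $a=0$ case is recovered continuously with $S=T$.
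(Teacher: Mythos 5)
Your proof is correct and is essentially the standard argument for this lemma (the paper itself states it as a citation to \cite{mishchenko2020random} without reproducing the proof): weighting the recursion by $(1+a)^{-(t+1)}$, telescoping, dropping $s_T\geq 0$, lower-bounding the weighted sum by $\prt{\sum_t w_t}\min_t q_t$, and using $(1+a)^T-1\geq aT$ to bound $1/S$. Your remarks on the tacit hypotheses $b>0$ and $T\geq 1$, and on why the exact value of $S$ is needed to avoid an extra $(1+a)^T$ factor on $c/b$, are accurate.
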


\section{Convergence Analysis for SCAFFOLD}

In this section, we present the convergence result for SCAFFOLD\footnote{We assume the full participation of all the agents for simplification.} (Algorithm \ref{alg:scaffold}) under Assumptions \ref{as:abc} and \ref{as:smooth}, i.e., the same setting as in Section \ref{sec:fedavg}. SCAFFOLD is known to be able to mitigate the influence of data heterogeneity more effectively than FedAvg. This is demonstrated by the absence of the parameters $\zeta^2$ and $\psi^2$ (in the BGD condition \eqref{eq:intro_bgd1}) in the convergence result stated in \cite{karimireddy2020scaffold}. 

The convergence results for SCAFFOLD given in this section inherit such a characteristic, avoiding an additional term involving $(\uf - \sumn\uf_i / n)$ when compared to the convergence result of FedAvg in Theorem \ref{thm:complexity}. However, the term $C(\uf - \sumn\uf_i / n)$ persists due to Assumption \ref{as:abc}. 
As long as $C>0$, the impact of data heterogeneity remains.
Such a finding is consistent with the results of FedAvg and aligns with the arguments in subsection \ref{subsec:descent}, emphasizing that considering Assumption \ref{as:abc} allows for characterizing the partial impact of data heterogeneity due to stochastic gradients. By contrast, the conclusions given in \cite{karimireddy2020scaffold} do not reflect the influence of data heterogeneity on the algorithm.

\begin{algorithm}
    \caption{SCAFFOLD}
    \label{alg:scaffold}
    \begin{algorithmic}[1]
        \STATE \textbf{Server input: } initial $x_0$, number of server updates $T$, and stepsize $\eta_s$.
        \STATE \textbf{Agent input: } initial $c_{i,0}$, number of local updates $Q$, and stepsize $\eta_a$.
        \STATE Server initializes $c_0 = \sumn c_{i,0} / n$.
        \FOR{$t=0,1,\cdots, T-1$}
            \FOR{Agent $i\in[n]$ in parallel}
                \STATE Receive $x_t$ from server and set $x_{i,t}^0 = x_t$.
                 \FOR{$\ell = 0,1,\cdots, Q-1$}
                    \STATE Query a stochastic gradient $g_i(x_{i,t}^\ell;\xi_{i,t}^\ell)$.
                    \STATE Update $x_{i,t}^{\ell + 1} = x_{i,t}^{\ell} - \eta_a (g_i(x_{i,t}^\ell;\xi_{i,t}^\ell) - c_{i,t} + c_t)$.
                \ENDFOR 
                \STATE Update $c_{i, t + 1} = c_{i,t} - c_t + \frac{1}{\eta_a Q}\prt{x_t - x_{i,t}^Q}$.
            \ENDFOR
            \STATE Server updates $x_{t + 1} = x_t + \frac{\eta_s}{n}\sumn(x_{i,t}^Q - x_t)$ and $c_{t + 1} = c_t + \frac{1}{n}\sumn\prt{c_{i, t + 1} - c_{i, t}}$. \label{line:scaffold_agg}
            \STATE Server sends $(x_{t + 1}, c_{t + 1})$ to all the agents.
        \ENDFOR 
        \STATE Output $x_{T}$.
    \end{algorithmic}
\end{algorithm}

Referring to Line \ref{line:scaffold_agg} and the initialization $c_0 = \sumn c_{i,0} / n$ in Algorithm \ref{alg:scaffold}, we deduce the following update:
    \begin{equation}
        \label{eq:scaffold_ct}
        \begin{aligned}
            c_{t + 1} 
            &= \frac{1}{n}\sumn c_{i,t + 1}.
        \end{aligned}
    \end{equation}
    
As a consequence, the following update applies to $\crki{x_t}$ similar to \eqref{eq:xt}:
\begin{align*}
    x_{t + 1} 
    &= x_t - \frac{\eta_s\eta_a}{n}\sum_{i=1}^n\sum_{\ell=0}^{Q-1}g_i(x_{i,t}^\ell;\xi_{i,t}^\ell):= x_t + \Delta x_t.
\end{align*}

Therefore, the proof for SCAFFOLD follows similar procedures as those in Section \ref{sec:fedavg} and is moved to Appendix \ref{app:scaffold} for conciseness. Compared with the analysis for FedAvg, the primary challenge lies in dealing with the additional parameters $c_{i,t}$ and $c_t$. Such a challenge is addressed by locating a Lyapunov function $\cL_k$:
\begin{equation}
    \label{eq:scaffold_lya_full}
    \begin{aligned}
        \cL_k &= \E\brk{f(x_t) - f^*} + \frac{2\teta L^2}{nQ}\sumn\sum_{\ell=0}^{Q-1}\E\brk{\norm{x_{i,t}^\ell - x_t}^2} \\
            &\quad +  \frac{56\teta^3 L^4}{\eta_s^2}\E\brk{\norm{\Delta x_t}^2},
    \end{aligned}
\end{equation}
where $\teta:= \eta_a\teta_s Q$ denotes the effective stepsize.

\subsection{Convergence Results: SCAFFOLD} 

{The complexity result for SCAFFOLD under Assumption \ref{as:abc} and Assumption \ref{as:smooth} is outlined in Theorem \ref{thm:scaffold}. In addition, the complexity result for the bounded variance case ($C=0$) is presented in Corollary \ref{cor:scaffold}. 

\begin{theorem}
    \label{thm:scaffold}
    Let Assumptions \ref{as:abc} and \ref{as:smooth} hold. Set the effective stepsize $\teta$ to satisfy
   \begin{equation}
    \label{eq:scaffold_teta}
     \begin{aligned}
         \teta\leq \min&\crk{\frac{\eta_s}{\sqrt{84 L(L+C)}}, \frac{1}{12(L+C)}, \sqrt{\frac{2nQ}{(L^2 + C^2)T}},\right.\\
         &\quad\left. \prt{\frac{\eta_s^2 Q}{560 LC(L+C)T}}^{1/3}}.
     \end{aligned}
   \end{equation}
   Then,
    \begin{equation}
        \label{eq:scaffold_minE}
        \begin{aligned}
            &\min_{t=0,1,\ldots, T-1} \E\brk{\norm{\nabla f(x_t)}^2}\leq \frac{36\prt{f(x_0) - \uf}}{\teta T}\\
            &\quad + \frac{192\teta^2 L^2(\Delta_1 + \Delta_2)}{\eta_s^2 T}+ \frac{2\teta L(2C\sigfn + D)}{nQ}\\
            &\quad + \frac{(3L/T  + 14(L+C))64\teta^2 L(2C\sigfn + D)}{\eta^2_s Q},
        \end{aligned}
    \end{equation}
    where $\Delta_1:= \sumn\norm{c_{i,0} - c_0}^2 / n$ and $\Delta_2:= \sumn\norm{\nabla f_i(x_0)}^2/n$.

    In addition, if we set 
    \begin{equation}
        \label{eq:teta_etas}
        \begin{aligned}
            \teta &= \frac{1}{\sqrt{\frac{(L^2 +C^2) T}{2nQ}} + 12(L+C) + \gamma_s},\\
            \gamma_s &:= \frac{\sqrt{84 L(L+C)}}{\eta_s} +\prt{\frac{560LC(L+C)T}{\eta_s Q}}^{1/3},
        \end{aligned}
    \end{equation}
    then, 
    \begin{equation}
        \label{eq:scaffold_minE_etas}
        \begin{aligned}
            &\min_{t=0,1,\ldots, T-1} \E\brk{\norm{\nabla f(x_t)}^2}\\
            & = \order{\frac{D + C\prt{\uf - \sumn\uf_i/n}}{\sqrt{nQ T}} + \frac{C}{(\eta_sQ)^{1/3}T^{2/3}}\right.\\
            &\left.\quad + \frac{nD + nC\prt{\uf - \sumn\uf_i/n} }{\eta_s^2T}}.
        \end{aligned}
    \end{equation}
\end{theorem}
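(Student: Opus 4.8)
The plan is to mirror the analysis for FedAvg in Section \ref{sec:fedavg}, but to track the Lyapunov function $\cL_k$ in \eqref{eq:scaffold_lya_full} rather than just the function-value gap, since the control variates $c_{i,t}, c_t$ couple the server iterate with the local drift and the aggregated step $\Delta x_t$. First I would establish a descent-type inequality for $\E[f(x_{t+1}) - \uf \mid \cF_t]$ analogous to Lemma \ref{lem:descent}: expanding $x_{t+1} = x_t + \Delta x_t$ via the descent lemma \eqref{eq:descent_lemma}, using unbiasedness of the stochastic gradients (Assumption \ref{as:abc}) and the key identity $\frac1n\sumn(c_t - c_{i,t}) = 0$ from \eqref{eq:scaffold_ct}, so that the leading term is $-\frac{\teta}{2}\norm{\nabla f(x_t)}^2$ plus (i) a client-drift term $\propto \frac{L^2}{nQ}\sumn\sum_\ell \E[\norm{x_{i,t}^\ell - x_t}^2\mid\cF_t]$, (ii) an ABC-variance term $\propto \frac{\teta^2 C L}{nQ}\sumn\sum_\ell\E[f_i(x_{i,t}^\ell) - \uf_i\mid\cF_t]$, and (iii) the additive noise $\propto \frac{\teta^2 L Q D}{n}$.

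Next I would bound the client drift $\sum_\ell\sumn\E[\norm{x_{i,t}^\ell - x_t}^2]$: here the recursion for $x_{i,t}^{\ell+1} - x_t$ now contains the correction $-\eta_a(c_{i,t} - c_t)$, and the standard trick is to write $c_{i,t} - c_t = \frac{1}{\eta_a Q}(x_{t-1} - x_{i,t-1}^Q) - (\text{avg})$ and to relate $c_{i,t}$ to a past gradient average, producing a bound in terms of $\norm{\nabla f(x_t)}^2$, $(f(x_t) - \uf)$, $\sigfn$, $D$, and crucially the previous-round drift and $\norm{\Delta x_{t-1}}^2$ — this is the analogue of Lemma \ref{lem:local_x}, but with memory. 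I would also need a recursion for $\E[\norm{\Delta x_t}^2]$ in terms of the same quantities (using $\Delta x_t = -\frac{\eta_s\eta_a}{n}\sum_i\sum_\ell g_i(x_{i,t}^\ell;\xi_{i,t}^\ell)$, unbiasedness, independence across agents giving the $1/n$ variance reduction, and Assumption \ref{as:abc} to convert the variance into $\sum_i(f_i - \uf_i) + D$, then Lemma \ref{lem:local_fi} to push $\sum_{i,\ell}(f_i(x_{i,t}^\ell) - \uf_i)$ back to $2nQ(f(x_t)-\uf) + 2nQ\sigfn$ plus drift). Combining the three pieces with the weights $\frac{2\teta L^2}{nQ}$ and $\frac{56\teta^3 L^4}{\eta_s^2}$ chosen exactly so that the cross terms telescope, and imposing the stepsize bound \eqref{eq:scaffold_teta} so that all the "bad" coefficients multiplying $\cL_k$-components are absorbed, yields a clean one-step inequality $\cL_{t+1} \le \cL_t - c_1\teta\,\E\norm{\nabla f(x_t)}^2 + c_2\teta^2(\cdots) + c_3\teta^3(\cdots)$, i.e. the form required by Lemma \ref{lem:min} with $a=0$.

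Finally I would apply Lemma \ref{lem:min} (or simply sum and telescope) to get $\min_{t<T}\E\norm{\nabla f(x_t)}^2 \le \frac{\cL_0}{c_1\teta T} + (\text{error terms})$, identify $\cL_0 - \uf$-contribution as $36(f(x_0)-\uf)/(\teta T)$ together with $\frac{192\teta^2 L^2(\Delta_1+\Delta_2)}{\eta_s^2 T}$ coming from the drift- and $\Delta x$-components of $\cL_0$ at $t=0$ (which vanish since $x_{i,0}^0 = x_0$, leaving only $c_{i,0}-c_0$ and $\nabla f_i(x_0)$ terms, hence $\Delta_1,\Delta_2$), and collect the remaining terms into $\frac{2\teta L(2C\sigfn + D)}{nQ}$ and $\frac{(3L/T + 14(L+C))64\teta^2 L(2C\sigfn+D)}{\eta_s^2 Q}$. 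The asymptotic rate \eqref{eq:scaffold_minE_etas} then follows by substituting the explicit $\teta$ from \eqref{eq:teta_etas} and balancing terms, exactly as in the passage from \eqref{eq:minE} to \eqref{eq:minE_alpha}. The main obstacle I anticipate is the drift/control-variate recursion: unlike FedAvg, $c_{i,t}$ carries information from round $t-1$, so the bound on $\sum_\ell\norm{x_{i,t}^\ell - x_t}^2$ is not self-contained at round $t$ and must be folded into the Lyapunov function with the precisely matched constant $56\teta^3 L^4/\eta_s^2$; getting that constant and the stepsize thresholds in \eqref{eq:scaffold_teta} to line up so every undesired term is dominated is where the delicate bookkeeping lies.
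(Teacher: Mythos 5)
Your proposal follows essentially the same route as the paper: a descent lemma for the effective update $x_{t+1}=x_t+\Delta x_t$, recursions for the client drift $\cE_t$ and for $\E[\norm{\Delta x_t}^2]$ (both carrying one round of memory through the control variates), assembled into the Lyapunov function \eqref{eq:scaffold_lya_full} with exactly the weights you name and closed via Lemma \ref{lem:min}, with $\cL_0$ contributing the $\Delta_1,\Delta_2$ terms. The one small inaccuracy is your claim that the one-step inequality ends up with $a=0$: the ABC term $\frac{\teta^2 CL}{nQ}\E[f(x_t)-\uf]$ cannot be cancelled against the negative gradient term, so the paper retains a multiplicative factor $1+a$ with $a=\frac{\teta^2 CL}{nQ}+\frac{280\teta^3 LC(L+C)}{\eta_s^2 Q}$, and the $T$-dependent stepsize restrictions in \eqref{eq:scaffold_teta} exist precisely to guarantee $(1+a)^T=\order{1}$ when Lemma \ref{lem:min} is applied.
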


\begin{proof}
    See Appendix \ref{app:thm_scaffold}.
\end{proof}
}

Comparing the complexities of SCAFFOLD \eqref{eq:scaffold_minE_etas} and FedAvg \eqref{eq:minE_alpha} reveals that SCAFFOLD effectively mitigates the influence of data heterogeneity. Furthermore, when $\eta_s$ is set to $\orderi{\sqrt{n}}$ in \eqref{eq:scaffold_minE_etas}, SCAFFOLD improves the complexity of FedAvg with respect to the number of agents $n$. However, due to the stochastic gradients, data heterogeneity still impacts the algorithmic performance when $C>0$.

To further compare with the result in \cite{karimireddy2020scaffold}, we introduce Corollary \ref{cor:scaffold} which states the convergence result of SCAFFOLD following Theorem \ref{thm:scaffold} when assuming uniformly bounded variance on the stochastic gradients ($C = 0$).

\begin{corollary}
    \label{cor:scaffold}
    Suppose Assumption \ref{as:abc} holds with $C=0$ and Assumption \ref{as:smooth} holds. Let the effective stepsize $\teta$ satisfy 
    \begin{align*}
        \teta &= \frac{1}{\sqrt{\frac{L^2T}{2nQ}} + \gamma_s},\ \gamma_s := \frac{\sqrt{84 L^2}}{\eta_s} + \sqrt{70}L,
    \end{align*}
    then 
    \begin{align*}
        &\min_{t=0,1,\ldots, T-1} \E\brk{\norm{\nabla f(x_t)}^2}= \order{\frac{D}{\sqrt{nQ T}} + \frac{nD}{\eta_s^2T}}.
    \end{align*}
\end{corollary}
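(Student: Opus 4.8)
The plan is to specialize Theorem~\ref{thm:scaffold} to the case $C=0$ and then simplify. First I would revisit the stepsize condition~\eqref{eq:scaffold_teta}: when $C=0$ the fourth term $\prt{\eta_s^2 Q/(560\,LC(L+C)T)}^{1/3}$ is vacuous (it diverges), and the remaining restrictions collapse onto the three listed in the corollary. Tracing the Lyapunov descent behind Theorem~\ref{thm:scaffold} (Appendix~\ref{app:thm_scaffold}) with all $C$-dependent quantities dropped, the binding constant-order restriction on $\teta$ loosens from $1/(12(L+C))$ to $1/(\sqrt{70}L)$, so the displayed choice $\teta = 1/\prt{\sqrt{L^2 T/(2nQ)} + \gamma_s}$ with $\gamma_s = \sqrt{84 L^2}/\eta_s + \sqrt{70}L$ is admissible, since it is bounded above by each of $\sqrt{2nQ/(L^2 T)}$, $\eta_s/\sqrt{84 L^2}$, and $1/(\sqrt{70}L)$.

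Next I would substitute $C=0$ into the bound~\eqref{eq:scaffold_minE}. The two contributions that carry the heterogeneity measure, namely the summand $4\teta LC\sigfn/(nQ)$ inside the third term and the summand $128\teta^2 LC\sigfn\prt{3L/T + 14(L+C)}/(\eta_s^2 Q)$ inside the fourth, both vanish, leaving
\begin{equation*}
    \min_{t} \E\brk{\norm{\nabla f(x_t)}^2} \leq \frac{36\prt{f(x_0) - \uf}}{\teta T} + \frac{192\teta^2 L^2(\Delta_1 + \Delta_2)}{\eta_s^2 T} + \frac{2\teta L D}{nQ} + \frac{\prt{3L/T + 14L}64\teta^2 L D}{\eta_s^2 Q}.
\end{equation*}
Then I would bound each term using $\teta \leq \sqrt{2nQ/(L^2 T)}$ and $1/\teta \leq \sqrt{L^2 T/(2nQ)} + \gamma_s$: the third term is $\order{D/\sqrt{nQT}}$; the fourth term is $\order{nD/(\eta_s^2 T)}$, with the $3L/T$ piece of strictly lower order; the second term is $\order{nQ(\Delta_1+\Delta_2)/(\eta_s^2 T^2)}$, dominated by the fourth; and the first term expands to $\order{1/\sqrt{nQT} + 1/(\eta_s T) + 1/T}$, which is absorbed by the two retained terms once $L$, $Q$, $\Delta_1$, $\Delta_2$, $f(x_0) - \uf$ are treated as constants. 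Collecting the dominant contributions gives $\order{D/\sqrt{nQT} + nD/(\eta_s^2 T)}$; in particular no term in $\sigfn$ survives, echoing the heterogeneity-free guarantee of~\cite{karimireddy2020scaffold}.

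The only step that is not purely mechanical — and the one I would be most careful about — is confirming that the constant-order stepsize restriction coming out of the Lyapunov argument genuinely relaxes when $C=0$ (so that $\gamma_s$ as stated is large enough to absorb it), i.e. re-reading the chain of inequalities used to prove Theorem~\ref{thm:scaffold} after deleting the $C$-terms; this introduces no new ideas. Everything else is direct substitution followed by term-by-term magnitude bookkeeping.
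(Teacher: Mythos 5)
Your proposal is correct and matches the paper's (implicit) argument: the corollary is obtained by setting $C=0$ in Theorem~\ref{thm:scaffold}, dropping the $\sigfn$-terms, and bounding the surviving terms with the stated stepsize, exactly as the paper does for \eqref{eq:scaffold_minE_etas}. The constant-factor mismatch you flag between the $\sqrt{70}L$ in $\gamma_s$ and the theorem's $1/(12(L+C))$ restriction is genuinely present in the paper but immaterial to the $\order{\cdot}$ conclusion, so your term-by-term bookkeeping suffices.
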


Corollary \ref{cor:scaffold} recovers the complexity of SCAFFOLD $\orderi{1/\sqrt{nQT} + 1/T}$ by setting $\eta_s = \orderi{\sqrt{n}}$ as in \cite{karimireddy2020scaffold}.

\section{Simulations}

In this section, we present two numerical experiments to demonstrate and complement the theoretical findings. Specifically, the first example, as described in \eqref{eq:f_ex}, shows the effectiveness of FedAvg and  SCAFFOLD for dealing with stochastic gradients that satisfy the ABC assumption. The second example in \eqref{eq:invariant}, illustrates that the measurement $(f^* - \frac{1}{n}\sumn \uf_i)$ is not a trivial upper bound. Such a measurement changes with varying degrees of data heterogeneity and directly affects the algorithmic performance. By contrast, the constants $\zeta^2$ and $\psi^2$ in the BGD condition \eqref{eq:intro_bgd1} may not reflect the convergence properties of FedAvg and  SCAFFOLD, as also shown in this example. All the experimental results are presented by averaging over $100$ repeated runs.

\subsection{Performance of FedAvg and SCAFFOLD under the General Variance Condition}

We begin by verifying the effectiveness
of FedAvg and SCAFFOLD under stochastic gradients that violate the relaxed growth condition \eqref{eq:intro_bv1} but satisfy the ABC condition \eqref{eq:abc}. To show this, we examine a heterogeneous data setting with $n=16$ in which $f_1(x)$ and $f_i(x), i = 2,3,\cdots, 16$ are defined as follows: 
\begin{equation}
    \label{eq:f_ex} 
    \begin{aligned}    
        f_1(x) &= \begin{cases}     \frac{x^2}{2}, & |x|< 1 \\     
        |x| - \frac{1}{2}, & \text{otherwise}     
    \end{cases},\\
    f_i(x) &= \ln(1 + e^{(x - i + 1)}), i =2,3,\cdots, 16.
    \end{aligned} 
\end{equation}
Note the function $\ln(1 + e^x)$ is the softplus activation functions \cite{glorot2011deep}. The stochastic gradients $g_1(x;\xi)$ and $g_i(x;\xi)$ for $f_1(x)$ and $f_i(x), i = 2,3,\cdots, 16$, respectively, are computed as follows:
\begin{equation}
    \label{eq:gs}
    \begin{aligned}     
        &g_1(x;\xi) = \begin{cases}     \nabla f_1(x) + \sqrt{|x|}, & \xi = 0\\     
        \nabla f_1(x) - \sqrt{|x|}, & \xi=1     
    \end{cases}, \\
    &g_i(x;\xi) = \begin{cases}
        \nabla f_i(x) + \sqrt{f_i(x)}, & \xi = 0\\
        \nabla f_i(x) - \sqrt{f_i(x)}, & \xi = 1
    \end{cases},i=2,3\cdots, 16.
    \end{aligned}
\end{equation}
The random variables $\xi$'s are independent among different $g_i$'s and follow a Bernoulli distribution with a probability of $p=1/2$. As shown in Appendix \ref{app:verify}, all the stochastic gradients defined in \eqref{eq:gs} violate the relaxed growth condition \eqref{eq:intro_bv1} but satisfy the ABC condition \eqref{eq:abc}.

It can be seen from Fig. \ref{fig:ex} that both FedAvg and SCAFFOLD converge to a neighborhood of the optimal value when the effective stepsize is properly selected, which demonstrates the effectiveness of FedAvg and SCAFFOLD. Moreover, SCAFFOLD can achieve better performance compared to FedAvg, which is consistent with the theoretical results.

\begin{figure}
    \centering
    \includegraphics[width=0.45\textwidth]{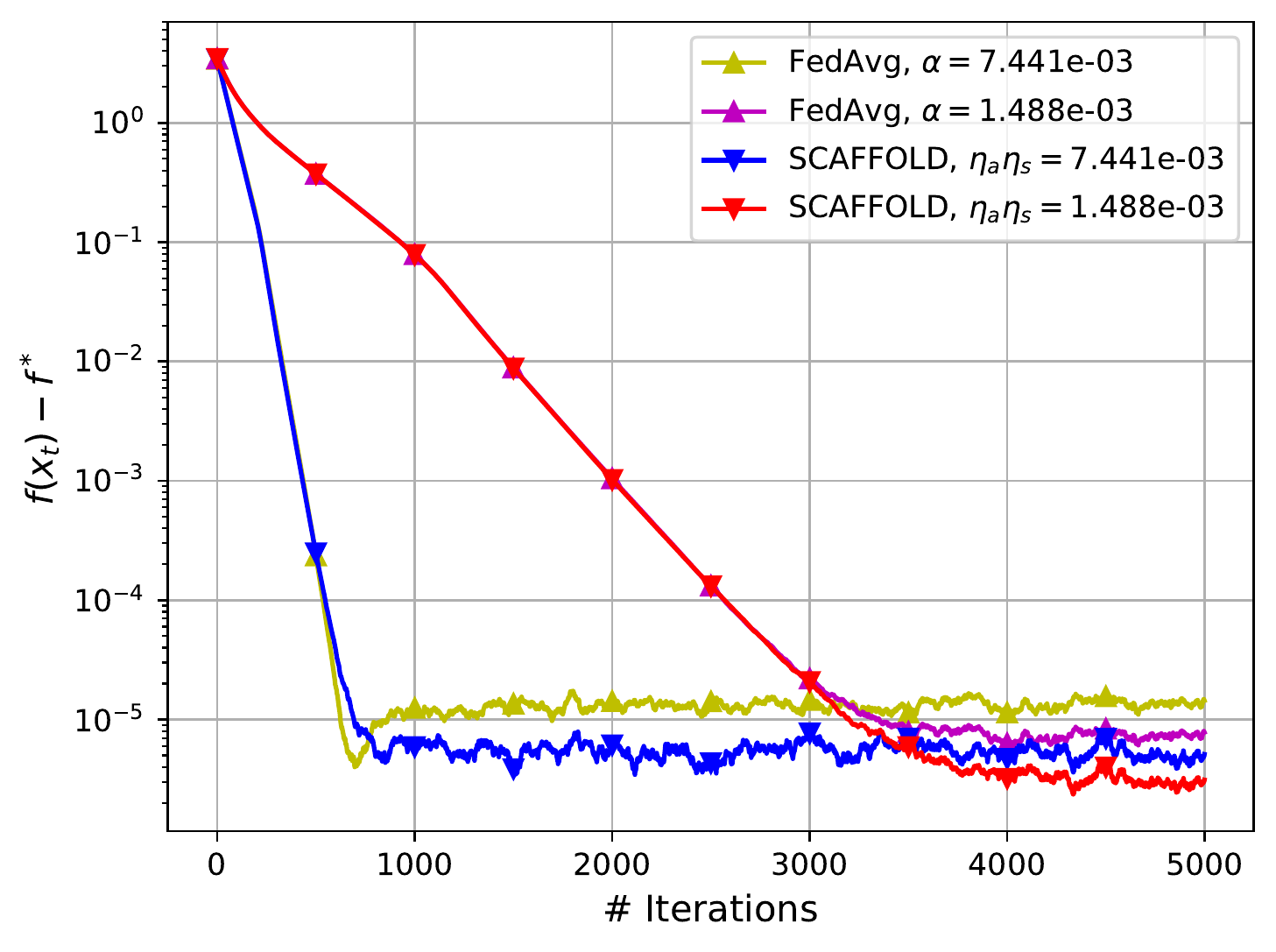}
    \caption{The performance of FedAvg and SCAFFOLD on example \eqref{eq:f_ex} with different stepsizes. The number of local updates is set as the same constant $17$.}
    \label{fig:ex}
\end{figure}

\subsection{Comparison with BGD Parameters}
\label{subsec:sim2}

In this second example, we consider the problem that satisfies the BGD condition \eqref{eq:intro_bgd1}, but the parameters $\zeta^2$ and $\psi^2$ are almost invariant to changes in data heterogeneity (different values of the parameter $|d|$'s), while the measurement $(\uf - \sumn\uf_i / n)$ is sensitive to such changes. 

Specifically, we consider the case where $n=2$, and the objective functions are defined as follows:
\begin{equation}
    \label{eq:invariant}
    \begin{aligned}
         f_1(x) &= x^2, \\
    f_2(x) &=
    \begin{cases}
    \frac{(x-d)^2}{2}, & |x-d|<1, \\
    |x-d| - \frac{1}{2}, & \text{otherwise}.
    \end{cases}
    \end{aligned}
\end{equation}
For this problem, larger values of $|d|$ correspond to  greater data heterogeneity and larger values of $f^* - \frac{1}{2}(f_1^* + f_2^*)$.

According to Table \ref{tab:invariant}, it is evident that $(\zeta^2 + \psi^2)$ remains nearly constant for different values of $d$, while $(\uf - \sumn\uf_i/n)$ varies considerably. \footnote{We obtain the minimal value of $(\zeta^2 + \psi^2)$ by solving the problem defined in \eqref{eq:subpro} approximately.} Furthermore, as depicted in Figure \ref{fig:invariant}, the performance of FedAvg and  SCAFFOLD changes with different values of $d$. This shows that $(\uf - \sumn\uf_i/n)$ is not a trivial upper bound and directly affects the performance of FedAvg and  SCAFFOLD. Table \ref{tab:invariant} corroborates the above argument. 
These observations are consistent with the argument that $(\uf - \sumn\uf_i/n)$ provides a better measure of data heterogeneity compared to the BGD condition.

\begin{table}[]
\setlength{\tabcolsep}{8pt}
\centering
\begin{tabular}{@{}ccccc@{}}
\toprule
$d$    & $\zeta^2+\psi^2$ & $f^* - \frac{1}{2}(f_1^* + f_2^*)$ & {FedAvg} & { SCAFFOLD} \\ \midrule
$-100$ & $5.194$          & $49.625$                           & {$250.478$} &{$307.168$}                             \\
$-50$  & $5.194$          & $24.625$                           & {$166.569$} & {$144.949$}                            \\
$-20$  & $5.194$          & $9.625$                            & {$47.099$} & {$49.656$}                              \\
$-2$   & $5.200$          & $0.625$                            & {$4.232$} & {$3.364$}                               \\ \bottomrule
\end{tabular}
\caption{Comparison of $\zeta^2+\psi^2$ and $f^* - \frac{1}{2}(f_1^* + f_2^*)$ for minimizing \eqref{eq:invariant} with varying $d$ values and the corresponding gaps $f(x_{4000}) - f^*$ {for FedAvg and SCAFFOLD. The stepsize is set as $0.00046$ for both methods.}}
\label{tab:invariant}
\end{table}

\begin{figure}
    \centering
    \includegraphics[width=0.45\textwidth]{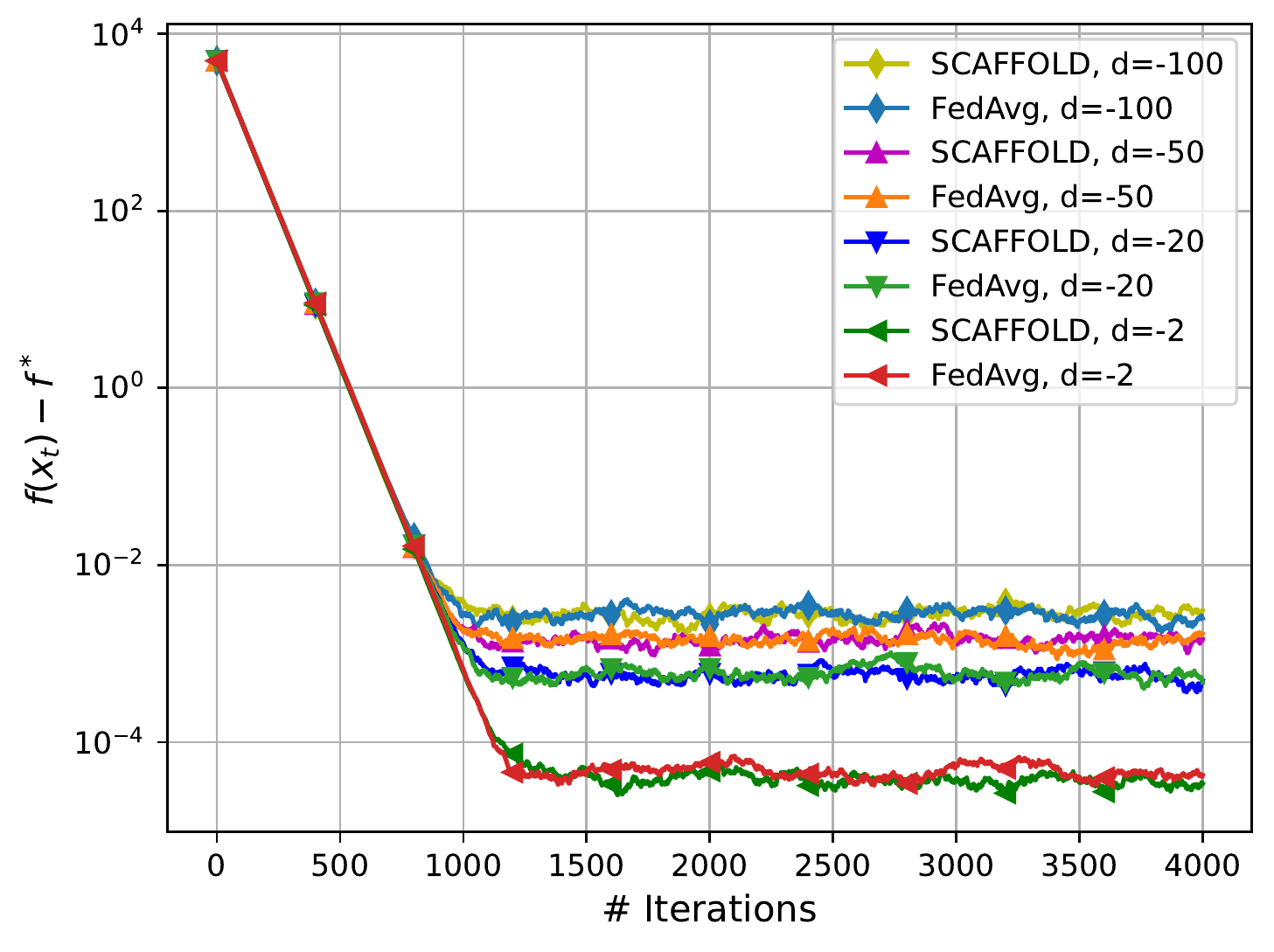}
    \caption{The performance of FedAvg and  SCAFFOLD for minimizing \eqref{eq:invariant} with different $d$'s. The stepsize is set as $0.00046$, and the number of local updates is set as $17$ for all $d$'s for fair comparison.}
    \label{fig:invariant}
\end{figure}

\begin{remark}
    We compare the potential approaches for measuring the degrees of data heterogeneity using the BGD condition in \eqref{eq:intro_bgd1} and the quantity $(\uf - \frac{1}{n}\sumn\uf_i).$ In many machine learning problems, $\uf_i = 0$, and accordingly, $f(x_T)$ can be utilized to estimate $(\uf - \frac{1}{n}\sumn\uf_i)$ after the training process. By contrast, obtaining meaningful values of $\zeta^2$ and $\psi^2$ in the BGD condition requires an additional subproblem to be solved. One possible solution is to solve the following subproblem, which is not trivial in general:
    \begin{equation}
        \label{eq:subpro}
        \begin{aligned}
        & \min_{\zeta,\psi >0} \zeta^2 + \psi^2,\\
        & \text{s.t.} \max_{x\in\R^p}\frac{1}{2}\sum_{i=1}^2 \Vert \nabla f_i(x) - \nabla f(x)\Vert^2 - \psi^2 \Vert \nabla f(x)\Vert^2 \leq \zeta^2.
        \end{aligned}
    \end{equation}
    Inaccurate estimates of $\zeta^2$ and $\psi^2$ cannot be used for measuring the degrees of data heterogeneity, since $\zeta^2$ and $\psi^2$ have no upper bounds. 
\end{remark}

\section{Conclusions}

    This paper is concerned with the key assumptions in distributed stochastic optimization, i.e., the assumption on the variance of the stochastic gradients and the bounded gradient dissimilarity (BGD) condition. By considering the more general ABC condition on the stochastic gradients and removing the BGD assumption, we show both FedAvg and SCAFFOLD maintain the same theoretical guarantee as the previous results. Moreover, the impact of data heterogeneity is clearly demonstrated in the complexity results of FedAvg and SCAFFOLD. The theoretical results are further supported with numerical experiments. Under diminishing stepsize, we establish almost sure convergence to a stationary point under the general condition for FedAvg.
    In addition, we discuss a nontrivial and informative measurement for data heterogeneity under general smooth nonconvex objective functions inspired by earlier works as well as its impacts on the algorithmic performance. 
    We anticipate that our analysis in this paper can also be applied to the decentralized setting, which is of future interest.

\appendices
\section{Proofs for FedAvg}
\label{app:part1}

\subsection{Proof of Lemma \ref{lem:ngrad}}
\label{app:lem_ngrad}
We have 
\begin{equation}
    \label{eq:uniform}
  \begin{aligned}
      &\frac{1}{n} \sumn{\norm{\nabla f_i(x) - \nabla f(x)}^2} \\
&= \frac{1}{n}\sumn\norm{\nabla f_i(x)}^2 + \norm{\nabla f(x)}^2 - \frac{2}{n}\sumn\inpro{\nabla f_i(x), \nabla f(x)}\\
&= \frac{1}{n}\sumn\norm{\nabla f_i(x)}^2 - \norm{\nabla f(x)}^2\\
&\leq \frac{1}{n}\sumn\norm{\nabla f_i(x)}^2\leq 2L\prt{f(x) - \frac{1}{n}\sumn \uf_i},
  \end{aligned}
\end{equation}
where the last inequality holds due to \eqref{eq:nf}.

\subsection{Proof of Lemma \ref{lem:h_con}}
\label{app:lem_h_con}
            We have 
            \begin{align}
                \norm{\nabla f_i(x) - \nabla f_i(x')}_2 &= \norm{\E_\xi \nabla F_i(x;\xi) - \E_\xi \nabla F_i(x';\xi)}_2\nonumber \\
                &\leq \E_\xi\norm{\nabla F_i(x;\xi) - \nabla F_i(x';\xi)}_2\label{eq:h_con_s1} \\
                &\leq \E_\xi L(\xi) \norm{x - x'}_2,\nonumber
            \end{align}
            where \eqref{eq:h_con_s1} holds due to Jensen's inequality \cite{rockafellar1997convex} that $\normi{\E X }_2\leq \E\normi{X}_2$ for a random variable $X$ ($\normi{\cdot}_2$ is convex).
            The remaining parts are obtained by invoking Proposition 3 in \cite{khaled2020better}.

\subsection{Proof of Lemma \ref{lem:descent}}
\label{app:lem_descent}

	Denote $g_{i,t}^\ell:= g_i(x_{i,t}^\ell;\xi_{i,t}^\ell)$. Applying descent lemma \eqref{eq:descent_lemma} to \eqref{eq:xt} and invoking the tower property yield
       
	\begin{equation}
		\label{eq:descent_s1}
		\begin{aligned}
			&\condE{f(x_{t + 1})}{\cF_t}\leq \frac{\alpha_t^2 L}{2}\condE{\norm{\frac{1}{n}\sumn\sum_{\ell=0}^{Q-1} g_{i,t}^\ell }^2}{\cF_t}\\
			&\quad + f(x_t) - \alpha_tQ \inpro{\nabla f(x_t), \frac{1}{nQ}\sumn\sum_{\ell=0}^{Q-1} \nabla f_i(\xitl)}\\
			&{\kh \leq} f(x_t)
			+ \frac{\alpha_tQ}{2}\condE{\norm{\nabla f(x_t) - \frac{1}{nQ}\sumn\sum_{\ell=0}^{Q-1} \nabla f_i(\xitl)}^2}{\cF_t}\\
            &\quad + \frac{\alpha_t^2 L}{2}\condE{\norm{\frac{1}{n}\sumn \sum_{\ell=0}^{Q-1}\brk{g_{i,t}^\ell - \nabla f_i(\xitl)}}^2}{\cF_t}\\
            &\quad- \frac{\alpha_tQ}{2}\norm{\nabla f(x_t)}^2,
		\end{aligned}
	\end{equation}
    where we let $\alpha_t \leq 1/(QL)$.
	We next consider the last term in \eqref{eq:descent_s1}. 
    
    \begin{align}
        &\condE{\norm{\frac{1}{n}\sum_{i=1}^n\sum_{\ell=0}^{Q-1} \brk{g_{i,t}^\ell - \nabla f_i(x_{i,t}^\ell)}}^2}{\cF_t}\nonumber\\
        &\quad = \frac{1}{n^2}\sumn\condE{\norm{\sum_{\ell=0}^{Q-1}\brk{g_{i,t}^\ell - \nabla f_i(x_{i,t}^\ell)}}^2}{\cF_t}\label{eq:var1_s1},
    \end{align}
    where the equality holds due to tower property and the unbiased property of $g_{i,t}^s$ in Assumption \ref{as:abc}. 
    We then consider the term in \eqref{eq:var1_s1}. 
    \begin{equation}
    \label{eq:var1_Q}
        \begin{aligned}
        &\condE{\norm{\sum_{\ell=0}^{Q-1}\brk{g_{i,t}^\ell - \nabla f_i(x_{i,t}^\ell)}}^2}{\cF_t}\\
        &= \sum_{\ell=0}^{Q-1}\condE{\norm{g_{i,t}^\ell - \nabla f_i(x_{i,t}^\ell)}^2}{\cF_t}\\
        &+ 2\sum_{0=r<s\leq Q-1}\condE{\inpro{g_{i,t}^r - \nabla f_i(x_{i,t}^r), g_{i,t}^s - \nabla f_i(x_{i,t}^s) }}{\cF_t}.
        \end{aligned}
    \end{equation}

    Due to $\cF_t\subset \cF_t^r\subset \cF_t^s$, we have from the tower property and $\condEi{g_{i,t}^s}{\cF_t^s} = \nabla f_i(x_{i,t}^s)$ in Assumption \ref{as:abc} that 
    \begin{equation}
        \label{eq:tower}
        \begin{aligned}
            &\condE{\inpro{g_{i,t}^r - \nabla f_i(x_{i,t}^r), g_{i,t}^s - \nabla f_i(x_{i,t}^s) }}{\cF_t}\\
            &= \condE{\condE{\inpro{g_{i,t}^r - \nabla f_i(x_{i,t}^r), g_{i,t}^s - \nabla f_i(x_{i,t}^s) }}{\cF_t^s}}{\cF_t} = 0.
        \end{aligned}
    \end{equation}

    Combing \eqref{eq:var1_s1}-\eqref{eq:tower} yields 
    \begin{equation}
        \label{eq:var_t}
        \begin{aligned}
            &\condE{\norm{\frac{1}{n}\sum_{i=1}^n\sum_{\ell=0}^{Q-1} \brk{g_{i,t}^\ell - \nabla f_i(x_{i,t}^\ell)}}^2}{\cF_t}\\
            &\leq \frac{C}{n^2}\sumn\sum_{\ell=0}^{Q-1}\condE{f_i(x_{i,t}^\ell) - \uf_i}{\cF_t} + \frac{Q D}{n}.
        \end{aligned}
    \end{equation}
    
	According to Assumption \ref{as:smooth}, we obtain
	\begin{equation}
		\label{eq:descent_diff}
		\begin{aligned}
			&\condE{\norm{\nabla f(x_t) - \frac{1}{nQ}\sumn\sum_{\ell=0}^{Q-1} \nabla f_i(\xitl)}^2}{\cF_t}\\
            & \leq \frac{L^2}{nQ}\sumn\sum_{\ell=0}^{Q-1}\condE{\norm{x_t - \xitl}^2}{\cF_t}.
		\end{aligned}
	\end{equation}

	We finish the proof by combing \eqref{eq:descent_s1}-\eqref{eq:descent_diff}.


\subsection{Proof of Lemma \ref{lem:local_fi}}
\label{app:lem_local_fi}


    We apply the descent lemma (stated in \eqref{eq:descent_lemma}) to $x_{i,t}^s$ and $x_t$ to get
    \begin{align}
        &f_i(x_{i,t}^s) - \uf_i \leq f_i(x_t) - \uf_i + \inpro{\nabla f_i(x_{t}), x_{i,t}^s - x_t}\nonumber\\
         &\quad + \frac{L}{2}\norm{x_{i,t}^s - x_t}^2\nonumber\\
         &\leq f_i(x_t) - \uf_i + \frac{1}{2L}\norm{\nabla f_i(x_t)}^2 + L\norm{x_{i,t}^s - x_t}^2\label{eq:local_fi_young}\\
         &\leq 2 \brk{f_i(x_t) - \uf_i} + L\norm{x_{i,t}^s - x_t}^2,\label{eq:local_fi_nf}
    \end{align}
    where \eqref{eq:local_fi_young} holds from Young's inequality that $\inpro{a,b}\leq \normi{a}^2/2/L + L\normi{b}^2/2$ and \eqref{eq:local_fi_nf} holds due to \eqref{eq:nf}. Note that the inequality \eqref{eq:local_fi_nf} holds for all $i\in[n]$ and $s\in\crki{0,1,\ldots, Q-1}$. Then, taking the summation over $i$ and $s$ in \eqref{eq:local_fi_nf} yields the desired result \eqref{eq:local_fi}.
    

\subsection{Proof of Lemma \ref{lem:local_x}}
\label{app:lem_local_x}

	Relation \eqref{eq:xitl_xt} yields $\xitl - x_t = -\alpha_t\sum_{j=0}^{\ell-1} g_i(x_{i,t}^j;\xi_{i,t}^j),\ell\in\crk{0,1,\cdots, Q-1}$. Recall that we denote $g_{i,t}^\ell:= g_{i}(x_{i,t}^\ell;\xi_{i,t}^\ell)$. Then,
    \begin{equation}
        \label{eq:xlt_split}
        \begin{aligned}
            &\xitl - x_t = -\alpha_t\sum_{j=0}^{\ell - 1}\crk{\brk{g_{i,t}^j - \nabla f_i(x_{i,t}^j)} + \nabla f_i(x_{i,t}^j)}.
        \end{aligned}
    \end{equation}

    Similar to \eqref{eq:var1_Q}, we have 
    \begin{equation}
        \label{eq:local_x_single}
        \begin{aligned}
            &\frac{1}{2\alpha_t^2}\condE{\norm{\xitl - x_t}^2}{\cF_t}\leq \sum_{j=0}^{\ell - 1}C\condE{f_i(x_{i,t}^j) - \uf_i}{\cF_t}\\
            &\quad + \ell D + \ell \sum_{j=0}^{\ell - 1}\condE{\norm{\nabla f_i(x_{i,t}^j)}^2}{\cF_t}.
        \end{aligned}
    \end{equation}

    Therefore, 
    \begin{equation}
        \label{eq:local_x_s1}
        \begin{aligned}
            &\sum_{\ell=0}^{Q-1}\sumn \condE{\norm{x_t - \xitl}^2}{\cF_t}\leq \alpha_t^2Q^2nD\\
            &\quad + 2\alpha_t^2QC\sumn\sum_{\ell = 0}^{Q-1}\condE{f_i(x_{i,t}^\ell) - \uf_i}{\cF_t}\\
            &\quad + \alpha_t^2 Q^2 \sumn\sum_{\ell =0}^{Q-1}\condE{\norm{\nabla f_i(x_{i,t}^\ell)}^2}{\cF_t}.
        \end{aligned}
    \end{equation}
    
    We next consider the last term in \eqref{eq:local_x_s1}. Note that 
    \begin{align*}
        \nabla f_i(\xitl) &= \nabla f_i(\xitl)-\nabla f_i(x_t) + \nabla f_i(x_t) - \nabla f(x_t)\\
        &\quad + \nabla f(x_t).
    \end{align*} 
    We have
	\begin{equation}
		\label{eq:local_x_s2}
		\begin{aligned}
			&\sum_{\ell=0}^{Q-1}\sumn \condE{\norm{\nabla f_i(\xitl)}^2}{\cF_t}
			\leq 3Qn\norm{\nabla f(x_t)}^2\\
            &\quad + 3L^2 \sum_{\ell=0}^{Q-1}\sumn\condE{\norm{x_{i,t}^\ell - x_t}^2}{\cF_t}\\
            &\quad + 6QnL\brk{\prt{f(x_t) - \uf} + {\kh \sigfn}},
		\end{aligned}
	\end{equation}
	where we invoke Lemma \ref{lem:ngrad} in the last inequality. Combing \eqref{eq:local_x_s1}, \eqref{eq:local_x_s2}, and \eqref{eq:local_fi} yields 
    \begin{equation}
        \label{eq:local_x_new}
        \begin{aligned}
            &\prt{1 - 3\alpha_t^2Q^2L^2 - 2\alpha_t^2 QCL}\sum_{\ell=0}^{Q-1}\sumn \condE{\norm{x_t - \xitl}^2}{\cF_t}\\
            &\leq  2\alpha_t^2Q^3n(2C+3L)\brk{f(x_t) - \uf}+ 3\alpha_t^2 Q^3 n\norm{\nabla f(x_t)}^2\\
            &\quad + \alpha_t^2Q^2nD + 2\alpha_t^2Q^3n(3L+2C)\sigfn.
        \end{aligned}
    \end{equation}

	Letting $\alpha_t \leq \min\crki{1/(QL\sqrt{12}), 1/(\sqrt{8QCL})}$ leads to the desired result \eqref{eq:local_x}.
	

\subsection{Proof of Theorem \ref{thm:complexity}}
\label{app:thm_complexity}

Let the stepsize $\alpha$ satisfy \eqref{eq:alphat}.
Then, we have 

\begin{equation}
    \label{eq:exp}
    \begin{aligned}
        &\brk{1 + 2\alpha^3Q^3L^2(2C+3L)+ \frac{\alpha^2 Q CL}{n}}^T\\
        &\leq \exp\crk{\brk{2\alpha^3Q^3L^2(2C+3L)+ \frac{\alpha^2 Q CL}{n}}T}\leq \exp(1).
    \end{aligned}
\end{equation}

Taking the full expectation on \eqref{eq:re_f} and applying Lemma \ref{lem:min} yield \eqref{eq:minE}. 
For $\alpha$ satisfying \eqref{eq:alphat_gamma}, we have 
\begin{equation}
    \label{eq:alpha_s1}
    \begin{aligned}
        &\frac{1}{\alpha QT} = \frac{C}{QT} +  \sqrt{\frac{C^2 + L^2}{nQT}} + \frac{\prt{14L^2(2C+3L)}^{1/3}}{T^{2/3}}\\
        &\quad +  \frac{2\sqrt{2CL}}{\sqrt{Q}T}.
    \end{aligned}
\end{equation}

Substituting \eqref{eq:alpha_s1} into \eqref{eq:minE} leads to \eqref{eq:minE_alpha}.

\subsection{Proof of Theorem \ref{thm:asy}}
\label{app:thm_asy}

	This is an application of Theorem 2.1 in \cite{li2022unified}.

	\begin{theorem}[Theorem 2.1 in \cite{li2022unified}]
		\label{thm:unified}
		Let the mapping $\Phi: \R^n \rightarrow \R^m$ and the sequences $\{x^k\}_{k\geq 0}\subseteq \R^n$ and $\{\mu_k\}_{k\geq 0} \subseteq \R_{++}$ be given. Consider the following conditions:    
	\begin{enumerate}[label=\textup{\textrm{(P.\arabic*)}},topsep=0pt,itemsep=0ex,partopsep=0ex]
		\item \label{P1} The function $\Phi$ is $L_{\Phi}$-Lipschitz continuous for some ${L}_{\Phi}>0$, i.e., we have $\|\Phi(x) - \Phi(y)\| \leq  {L}_{\Phi} \|x-y\|$ for all $x,y \in \R^n$. 
		\item \label{P2} There exists a constant $a > 0$ such that $\sum_{k=0}^{\infty} \ \mu_k \, \E[\|\Phi(x^k) \|^a]  < \infty$.
	\end{enumerate}
	The following statements are valid: 
	\begin{enumerate}[label=\textup{\textrm{(\roman*)}},topsep=0pt,itemsep=0ex,partopsep=0ex]
		\item Let the conditions \ref{P1}--\ref{P2} be satisfied and suppose further that
		\begin{enumerate}[label=\textup{\textrm{(P.\arabic*)}},topsep=0pt,itemsep=0ex,partopsep=0ex,start=3]
		\item \label{P3} there exist  constants ${A}, {B}, b \geq 0$ and $ p_1, p_2, q >0$ such that 
		\[ \E[\|x^{k+1} - x^k\|^q] \leq  {A} \mu_k^{p_1} \ + \ {B} \mu_k^{p_2}  \, \E[\|\Phi(x^k) \|^b], \]	 
		 \item \label{P4} the sequence $\{\mu_k\}_{k\geq 0}$  and the  parameters $a, b, q, p_1, p_2$ satisfy
		 \begin{align*}
            &\{\mu_k\}_{k\geq 0} \text{is bounded}, \quad {\sum}_{k=0}^{\infty} \ \mu_k = \infty,\\
            &\text{and} \quad a, q \geq 1, \;\; a \geq b,  \;\; p_1, p_2 \geq q. 
         \end{align*}
    	\end{enumerate}
        Then, it holds that $\lim_{k \to \infty} \E[\|\Phi(x^k)\|] = 0$. 
		\item Let the properties \ref{P1}--\ref{P2} hold and assume further that 
		\begin{enumerate}[label=\textup{\textrm{(P.\arabic*${}^\prime$)}},topsep=0pt,itemsep=0ex,partopsep=0ex,start=3]
			\item \label{P3'} there exist constants ${A}, b \geq 0$, $p_1, p_2, q > 0$ and random vectors $\bs{A}_k, \bs{B}_k : \Omega \to \R^n$ such that
			\[ x^{k+1} = x^k + \mu_k^{p_1} \bs{A}_k + \mu_k^{p_2} \bs{B}_k, \]
			and for all $k$,  $\bs{A}_k, \bs{B}_k$ are $\mathcal F_{k+1}$-measurable, and we have $\E[\bs{A}_k \mid \mathcal F_k] = 0$ almost surely, $\E[\|\bs{A}_k\|^q] \leq {A}$, and $\limsup_{k \to \infty} \|\bs{B}_k\|^q /(1+\|\Phi(x^k)\|^b) < \infty$ almost surely,
			\item \label{P4'} the sequence $\{\mu_k\}_{k\geq 0}$  and the  parameters $a, b, q, p_1, p_2$ satisfy $\mu_k \to 0$,
			\begin{align*}
                &{\sum}_{k=0}^{\infty} \ \mu_k = \infty, \quad {\sum}_{k=0}^\infty \, \mu_k^{2p_1} < \infty,\\
                &\text{and} \quad q \geq 2, \;\; qa \geq b, \ \ p_1 > \frac{1}{2}, \;\; p_2 \geq 1.
            \end{align*}
		\end{enumerate}
		Then, it holds that $\lim_{k \to \infty} \|\Phi(x^k)\| = 0$ almost surely. 
	\end{enumerate}
	\end{theorem}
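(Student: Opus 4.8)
The plan is to prove both conclusions through a common ``\emph{liminf-to-limit}'' mechanism, carrying out \ref{P3}/\ref{P4} in expectation for part (i) and \ref{P3'}/\ref{P4'} pathwise for part (ii). The shared starting point is that \ref{P2} gives $\sum_k \mu_k\,\E[\|\Phi(x^k)\|^a] < \infty$ while \ref{P4} or \ref{P4'} gives $\sum_k \mu_k = \infty$; hence $\liminf_k \E[\|\Phi(x^k)\|^a] = 0$, since a positive lower bound for large $k$ would force the weighted series to diverge. For part (ii), Tonelli's theorem upgrades the finite expectation of $\sum_k\mu_k\|\Phi(x^k)\|^a$ to $\sum_k\mu_k\|\Phi(x^k)\|^a<\infty$ almost surely, which with $\sum_k\mu_k=\infty$ yields $\liminf_k\|\Phi(x^k)\|=0$ almost surely.

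For part (i) I would set $y_k := \E[\|\Phi(x^k)\|]$ and $c_k := \E[\|\Phi(x^k)\|^a]$; since $a\ge1$, Jensen gives $y_k^a\le c_k$, so $\liminf_k y_k=0$. The one-step estimate comes from \ref{P1}, namely $\big|\,\|\Phi(x^{k+1})\|-\|\Phi(x^k)\|\,\big|\le L_\Phi\|x^{k+1}-x^k\|$; taking expectations, applying Jensen for $q\ge1$, and invoking \ref{P3} gives $y_{k+1}-y_k \le L_\Phi\big(A^{1/q}\mu_k^{p_1/q}+B^{1/q}\mu_k^{p_2/q}(\E[\|\Phi(x^k)\|^b])^{1/q}\big)$. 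Using $a\ge b$ (so $\E[\|\Phi(x^k)\|^b]\le c_k^{b/a}$ by concavity), $p_1,p_2\ge q$, and boundedness of $\{\mu_k\}$, the right-hand side is controlled by $\mu_k$ times factors that become summable once restricted to the indices where $\|\Phi\|$ is bounded away from $0$: on each such index $c_k\ge y_k^a\ge\epsilon^a$, so $\sum_{\{k:\,y_k\ge\epsilon\}}\mu_k\le \epsilon^{-a}\sum_k\mu_k c_k<\infty$. An excursion argument then closes the loop: if $\limsup_k y_k>2\epsilon$ while $\liminf_k y_k=0$, there are infinitely many excursions of $y_k$ from below $\epsilon$ to above $2\epsilon$, each with total increase exceeding $\epsilon$, yet the summed increase over all such excursions is finite by the previous bound, a contradiction. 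Letting $\epsilon\downarrow0$ gives $y_k\to0$, and Jensen on the output norm completes (i).

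For part (ii) I would exploit the decomposition \ref{P3'}, $x^{k+1}-x^k=\mu_k^{p_1}\bs{A}_k+\mu_k^{p_2}\bs{B}_k$. The martingale part $\sum_k\mu_k^{p_1}\bs{A}_k$ converges almost surely: since $q\ge2$, $\E[\|\bs{A}_k\|^2]\le A^{2/q}$ by Jensen, so $\sum_k\mu_k^{2p_1}\E[\|\bs{A}_k\|^2]<\infty$ follows from $\sum_k\mu_k^{2p_1}<\infty$, and $L^2$-martingale convergence applies. For the residual part, the growth condition on $\bs{B}_k$ yields, on any excursion where $\|\Phi(x^k)\|$ stays below $2\epsilon$, a uniform pathwise bound $\|\bs{B}_k\|\le\kappa(\omega)$, so $\big\|\sum_{k=m}^{n-1}\mu_k^{p_2}\bs{B}_k\big\|\le\kappa(\omega)\sum_{k=m}^{n-1}\mu_k^{p_2}$. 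As in (i), $\sum_{\{k:\,\|\Phi(x^k)\|\ge\epsilon\}}\mu_k<\infty$ almost surely, and $p_2\ge1$ with bounded $\{\mu_k\}$ gives $\sum_{\{\cdot\}}\mu_k^{p_2}<\infty$, so this contribution over late excursions is negligible; likewise the martingale increments $\big\|\sum_{k=m}^{n-1}\mu_k^{p_1}\bs{A}_k\big\|$ vanish as $m\to\infty$ by Cauchyness of the convergent series. Running the same excursion argument pathwise, together with $\liminf_k\|\Phi(x^k)\|=0$ almost surely, shows only finitely many excursions occur, so $\limsup_k\|\Phi(x^k)\|\le2\epsilon$ almost surely; letting $\epsilon\downarrow0$ gives $\|\Phi(x^k)\|\to0$ almost surely.

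The main obstacle is precisely the upgrade from $\liminf\|\Phi\|=0$ to the full limit: the excursion argument must simultaneously dominate the deterministic step-size terms and, in part (ii), the stochastic increments along each pathwise excursion. The crux is recognizing that the $\mu$-weight accumulated while $\|\Phi\|$ stays bounded away from zero is finite, a direct consequence of \ref{P2}, which is exactly what caps the total upward movement and rules out infinitely many excursions; the delicate bookkeeping is reconciling the exponent hypotheses in \ref{P4}/\ref{P4'} (the roles of $a\ge b$, $p_1,p_2\ge q$, $q\ge2$, and $p_1>\tfrac12$) so that every error term is dominated uniformly along these excursions.
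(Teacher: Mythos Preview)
The paper does not prove this theorem at all: it is quoted verbatim from \cite{li2022unified} and used as a black box inside the proof of Theorem~\ref{thm:asy}. There is therefore no ``paper's own proof'' to compare against; the paper's contribution is only to \emph{verify} the hypotheses \ref{P1}--\ref{P4} and \ref{P3'}--\ref{P4'} for the FedAvg iterates with $\Phi=\nabla f$ and $\mu_k=\alpha_k$ (this verification is the content of Appendix~\ref{app:thm_asy}).

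That said, your sketch follows the standard route for such ``liminf-to-limit'' results and is essentially the approach used in \cite{li2022unified}. One point that deserves more care in your write-up is the treatment of the \emph{first step} of each excursion in part~(i): at the entry index $m$ you have $y_m<\epsilon$, so the key inequality $\sum_{\{k:\,y_k\ge\epsilon\}}\mu_k<\infty$ does not cover that step, and since \ref{P4} only asks $\{\mu_k\}$ to be bounded (not $\mu_k\to 0$), a single increment $y_{m+1}-y_m$ need not be small. The usual fix is to split the excursion one step later (from $m+1$, where $y_{m+1}\ge\epsilon$, to the first exit above $2\epsilon$) and argue that if the first step alone already jumps above $2\epsilon$ infinitely often, then the increment bound forces $\mu_m^{p_2/q}c_m^{b/(aq)}$ to be bounded below along those indices, which after exponent bookkeeping contradicts $\sum_k\mu_k c_k<\infty$. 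Your sketch implicitly glosses over this, but it is exactly the ``delicate bookkeeping'' you flag at the end, so you are aware of where the work lies.
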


	We next verify the conditions in Theorem \ref{thm:unified} by setting $\Phi:= \nabla f$. Then \ref{P1} is satisfied for $L_{\Phi} = L$. According to Lemma \ref{lem:combined} and the supermartingale convergence theorem, we have $\sum_{t=0}^{\infty}\alpha_t \E[\norm{\nabla f(x_t)}^2]<\infty$. Hence, condition \ref{P2} is satisfied with $a = 2$ and $\mu_t = \alpha_t$. In addition, we have the sequence $\crk{\E[f(x_t) - \uf]}$ converges to some finite value.
	From \eqref{eq:xt}, we have 
	\begin{equation}
        \label{eq:AB}
        \begin{aligned}
            &x_{t+1}
            = x_t + \alpha_t \underbrace{\frac{1}{n}\sumn \sum_{\ell=0}^{Q-1}\nabla f_i(\xitl)}_{\bs{B}_t}\\
            &\quad- \alpha_t \underbrace{\brk{\frac{1}{n}\sumn\sum_{\ell = 0}^{Q-1} g_i(\xitl;\xi_{i,t}^\ell) - \frac{1}{n}\sumn \sum_{\ell=0}^{Q-1}\nabla f_i(\xitl)}}_{\bs{A}_t}.
        \end{aligned}
    \end{equation}

	\begin{enumerate}
		\item Verifying \ref{P3} and \ref{P4}. On one side, substituting \eqref{eq:local_x} into \eqref{eq:local_fi} 
            leads to 
		\begin{equation}
            \label{eq:fi_finite}
            \begin{aligned}
                &\sum_{\ell=0}^{Q-1}\sumn \E\brk{f_i(\xitl) - \uf_i}\leq \mathcal{C}_1 + \cC_2\E\brk{f(x_t) - \uf},\\
                &\quad \exists \text{ some constant }\mathcal{C}_1, \cC_2 \geq 0.
            \end{aligned}
        \end{equation}

            On the other side, \eqref{eq:local_x} and \eqref{eq:local_x_s2} imply that
		\begin{equation}
            \label{eq:xitl_finite}
            \begin{aligned}
                \sum_{\ell=0}^{Q-1}\sumn \E\brk{\norm{\nabla f_i(\xitl)}^2} & \leq \cC_3 + \cC_4\E\brk{\norm{\nabla f(x_t)}^2}\\
                &\quad + \cC_5 \E\brk{f(x_t) - \uf},
            \end{aligned}
        \end{equation}
		for some positive constants $\cC_3, \cC_4$, and $\cC_5$. Therefore, we have from \eqref{eq:AB} that 
		\begin{equation}
			\label{eq:verify_P3}
			\begin{aligned}
				&\E\brk{\norm{x_{t + 1} - x_t}^2} \leq \frac{\alpha_t^2QC}{n}\sum_{\ell=0}^{Q-1}\sumn \E\brk{f_i(\xitl) - \uf_i}\\
                &\quad + \alpha_t^2Q^2 D + \frac{\alpha_t^2 Q}{n}\sum_{\ell=0}^{Q-1}\sumn \E\brk{\norm{\nabla f_i(\xitl)}^2}.
			\end{aligned}
		\end{equation}

		Combining \eqref{eq:fi_finite}-\eqref{eq:verify_P3} verifies \ref{P3} and \ref{P4} for $p_1 = p_2 = b = q= 2$.
		\item Verifying \ref{P3'} and \ref{P4'}. It can be verified from the tower property that $\condEi{\bs{A}_t}{\cF_t} =0$ almost surely. In addition, Assumption \ref{as:abc} and \eqref{eq:fi_finite} verifies that $\E[\normi{\bs{A}_t^2}]$ is bounded. We next verify $\limsup_{k \to \infty} \|\bs{B}_k\|^q /(1+\|\Phi(x^k)\|^b) < \infty$ almost surely, which can be seen from \eqref{eq:xitl_finite}. 
	\end{enumerate}

	We conclude that $\lim_{t\rightarrow\infty}\E[\normi{\nabla f(x_t)}] = 0$ and $\lim_{t\rightarrow}\normi{\nabla f(x_t)} = 0$ almost surely.


\subsection{Verifying the General Variance Condition}
\label{app:verify}

    As shown in \cite[Section 9.1]{khaled2020better}, the stochastic gradient $g_1(x;\xi)$ of $f_1(x)$ violates the relaxed growth condition \eqref{eq:intro_bv1}, but satisfies Assumption \ref{as:abc}. We next show that this argument also applies to the stochastic gradient $g_i(x;\xi)$ of $f_i(x)$, where $i=2,3,\cdots,16$. We have 
    \begin{equation}
        \label{eq:g2sq}
        \begin{aligned}
            \E_{\xi}\brk{\norm{g_i(x;\xi)}^2} = \prt{\frac{1}{1 + e^{-(x-i+1)}}}^2 + \ln(1 + e^{x-i + 1}).
        \end{aligned}
    \end{equation}
    To show the relaxed growth condition \eqref{eq:intro_bv1} does not hold, it suffices to show that for any $\sigma,\eta\geq 0$ but not equal to zero simultaneously, there exists $x\in\R$ such that 
    \begin{equation}
        \label{eq:rg_not}
        \E_{\xi}\brk{\norm{g_i(x;\xi)}^2}>\sigma^2 + (\eta^2 + 1)\norm{\nabla f_i(x)}^2.
    \end{equation}

    We define $h_i(x)$ for $i=2,3,\cdots, 16$ as 
    \begin{equation}
        \label{eq:hx}
        h_i(x):= \ln(1 + e^{x - i + 1}) - \eta^2\prt{\frac{1}{1+e^{-(x-i+1)}}}^2-\sigma^2.
    \end{equation}

    Noting that $1/(1+e^{-x})\in (0, 1),\forall x\in\R$, we have $h_i(x)> \ln(1 + e^{x-i+1}) - \eta^2 -\sigma^2.$
    Thus, we obtain $h_i(x)>0$ for all $x\geq [\ln(e^{(\eta^2 + \sigma^2)}-1)+i - 1]$, implying that the relaxed growth condition \eqref{eq:intro_bv1} is not satisfied for these values of $x$. This is true for each $i=2,3,\cdots, 16$.

    We next show that for any $i=2,3,\cdots, 16$, the stochastic gradient $g_i(x;\xi)$ satisfies the ABC assumption \eqref{eq:abc}. Noting that $f_i^*=0$, we have 
    \begin{align*}
        &\E_{\xi}\brk{\norm{g_i(x;\xi) - \nabla f_i(x)}^2}= \E_{\xi}\brk{\norm{g_i(x;\xi)}^2} - \norm{\nabla f_i(x)}^2\\
        &\leq \prt{\frac{1}{1 + e^{-(x-i+1)}}}^2 + \ln(1 + e^{x-i + 1})\\
        &\leq 1 + f_i(x) - f_i^*, i=2,3,\cdots, 16.
    \end{align*}
    Hence, the stochastic gradient $g_i(x;\xi)$ satisfies the ABC assumption \eqref{eq:abc}. We also note that the above construction can be applied to any function whose gradient is bounded and whose function value can tend towards infinity.

{
\section{Proofs for SCAFFOLD}
\label{app:scaffold}

\subsection{Supporting Lemmas for SCAFFOLD}

The analysis for SCAFFOLD generally aligns with the procedures in \cite[Lemma 16-Lemma 19]{karimireddy2020scaffold}, with some notable adaptations. Specifically, we replace the bounded variance assumption for stochastic gradients with Assumption \ref{as:abc} and assume the full participation of all the agents for simplification.
Furthermore, we introduce a Lyapunov function to address the challenge because of considering Assumption \ref{as:abc}.

Referring to Algorithm \ref{alg:scaffold}, we can express the SCAFFOLD \cite{karimireddy2020scaffold} update as follows. 

Recall that we denote $g_{i,t}^s:= g_i(x_{i,t}^s;\xi_{i,t}^s)$. The agent updates as in \eqref{eq:scaffold_agent}:
\begin{subequations}
    \label{eq:scaffold_agent}
    \begin{align}
        x_{i,t}^{\ell + 1} &= 
            x_{i,t}^\ell - \eta_a \prt{g_{i,t}^\ell - c_{i,t} + c_t},\label{eq:scaffold_xi}\\
        c_{i,t + 1} &= c_{i,t} - c_t + \frac{1}{Q\eta_a}\prt{x_t - x_{i,t}^Q}.\label{eq:scaffold_ci}
    \end{align}
\end{subequations}

The server updates as in \eqref{eq:scaffold_server}:
\begin{subequations}
    \label{eq:scaffold_server}
    \begin{align}
        x_{t + 1} &= x_t + \frac{\eta_s}{n} \sum_{i=1}^n\prt{x_{i,t}^Q - x_t},\label{eq:scaffold_x}\\
    c_{t + 1} &= c_t + \frac{1}{n}\sum_{i=1}^n\prt{c_{i,t+1} - c_{i,t}}.\label{eq:scaffold_c}
    \end{align}
\end{subequations}

Therefore, we have for $\ell = 0,1,\ldots, Q-1$ that 
\begin{equation}
    \label{eq:scaffold_x_diff}
    x_{i,t}^{\ell + 1} = x_{i,t}^0 - \eta_a\sum_{s=0}^\ell g_{i,t}^s - \eta_a(\ell + 1)\prt{c_t - c_{i,t}}.
\end{equation}

Letting $\ell = Q-1$ in \eqref{eq:scaffold_x_diff} and noting $x_{i,t}^0 = x_t$, we obtain
\begin{equation}
    \label{eq:scaffold_xq}
    x_{i,t}^Q - x_t = - \eta_a\sum_{s=0}^{Q-1} g_{i,t}^s - \eta_a Q\prt{c_t - c_{i,t}}.
\end{equation}

Substituting \eqref{eq:scaffold_xq} into \eqref{eq:scaffold_ci} and \eqref{eq:scaffold_x} yields 

\begin{align}
    c_{i,t + 1} &= \frac{1}{Q}\sum_{s=0}^{Q-1} g_{i,t}^s, \label{eq:scaffold_ci_sum}\\
    x_{t + 1} &= x_t - \frac{\teta}{nQ}\sum_{i=1}^n\sum_{s=0}^{Q-1}\brk{g_{i,t}^s + \prt{c_t - c_{i,t}}}:= x_t + \Delta x_t, \label{eq:scaffold_xt_sum}
\end{align}
where $\teta= \eta_a\eta_s Q$ denotes the effective stepsize.
Similar to those in Appendix \ref{app:lem_descent}, it is critical to estimate $\E\brki{\normi{\Delta x_t}^2}$. 
Note that $\sigfn= f^* - \sumn\uf_i/n$ in the following. Similar to those in \cite[Appendix E.2]{karimireddy2020scaffold}, we define 
    \begin{equation}
        \label{eq:scaffold_add}
        \begin{aligned}
            \Xi_t&:= \frac{1}{nQ}\sumn\sum_{s = 0}^{Q-1}\E\brk{\norm{x_{i,t-1}^s - x_{t}}^2},\ t = 1, 2,\ldots, T,\\
            \mathcal{E}_t &:= \frac{1}{nQ}\sumn\sum_{s = 0}^{Q-1}\E\brk{\norm{x_{i,t}^s - x_t}^2},\ t = 0,1,\ldots, T - 1.
        \end{aligned}
    \end{equation}

\begin{lemma}
    \label{lem:dxt}
    Let Assumptions \ref{as:abc} and \ref{as:smooth} hold. Then,
    \begin{equation}
        \label{eq:dxt}
        \begin{aligned}
            &\E\brk{\norm{\Delta x_t}^2} \leq 7\teta^2 L(L+C) \cE_t + 28\teta^2 L^2\E\brk{\norm{\Delta x_{t -1}}^2} \\
            &\quad + 14\teta^2 L(C + 2L)\cE_{t-1} + 7\teta^2\E\brk{\norm{\nabla f(x_t)}^2}\\
            &\quad + \frac{14\teta^2C}{nQ}\E\brk{f(x_t) - f^* } + \frac{21\teta^2 (2C\sigfn+D)}{nQ}\\
            &\quad + \frac{28\teta^2C}{nQ}\E\brk{f(x_{t-1}) - f^*}.
        \end{aligned}
    \end{equation}
\end{lemma}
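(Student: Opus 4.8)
The plan is to mirror the one-round estimate of Lemma 16 in \cite{karimireddy2020scaffold}, but with the bounded-variance hypothesis there replaced by Assumption \ref{as:abc}, and with Lemma \ref{lem:local_fi} used to keep everything expressed through the server-side quantities $\E[f(x_t)-\uf]$, $\cE_t$, and $\sigfn$. First I would rewrite $\Delta x_t$: combining the server update $x_{t+1}-x_t=\frac{\eta_s}{n}\sumn(x_{i,t}^Q-x_t)$ with \eqref{eq:scaffold_xq} and \eqref{eq:scaffold_ci_sum} gives, for $t\ge 1$,
\[
\Delta x_t=-\frac{\teta}{nQ}\sumn\sum_{s=0}^{Q-1}\prt{g_{i,t}^s-c_{i,t}+c_t},\qquad c_{i,t}=\tfrac1Q\sum_{r=0}^{Q-1}g_{i,t-1}^r,\quad c_t=\tfrac1n\sumn c_{i,t}.
\]
Thus the corrected gradient $g_{i,t}^s-c_{i,t}+c_t$ is built from round-$t$ stochastic gradients and from averages of round-$(t-1)$ stochastic gradients; this is the mechanism that produces the round-$(t-1)$ terms on the right of \eqref{eq:dxt}. (For $t=0$ the statement is vacuous, the initialization being carried in Theorem \ref{thm:scaffold} by $\Delta_1,\Delta_2$.)

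Next I would decompose $g_{i,t}^s-c_{i,t}+c_t$ into the round-$t$ martingale noise $g_{i,t}^s-\nabla f_i(x_{i,t}^s)$, the round-$t$ local drift $\nabla f_i(x_{i,t}^s)-\nabla f_i(x_t)$, and the control lag $\nabla f_i(x_t)-c_{i,t}+c_t$. Substituting the expressions for $c_{i,t},c_t$ into the control lag and expanding every round-$(t-1)$ stochastic gradient first around the corresponding full gradient at its own local iterate and then around the full gradient at $x_{t-1}$, the control lag itself splits into round-$(t-1)$ martingale-noise terms (one per agent and an averaged one), round-$(t-1)$ local-drift terms, the gradient-change terms $\nabla f_i(x_{t-1})-\nabla f_i(x_t)$ and $\nabla f(x_{t-1})-\nabla f(x_t)$, and the ``clean'' term $\nabla f(x_t)$. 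I would then apply Jensen over the agents and Young's inequality to $\norm{\cdot}^2$, the $\nabla f(x_t)$ piece contributing the $7\teta^2\,\E[\norm{\nabla f(x_t)}^2]$ term. Each remaining group is bounded by a standard device: for the martingale-noise groups (at $t$ and at $t-1$) I keep the averaged-then-squared form, use conditional unbiasedness, independence of the stochastic gradients across agents, and orthogonality of the within-round increments (exactly as in \eqref{eq:var1_s1}--\eqref{eq:tower}), then invoke Assumption \ref{as:abc} and Lemma \ref{lem:local_fi} to replace $\sumn\sum_{s=0}^{Q-1}\E[f_i(x_{i,t}^s)-\uf_i]$ by $2nQ\,\E[f(x_t)-\uf]+2nQ\sigfn+LnQ\,\cE_t$ (and likewise at $t-1$), which produces the $\tfrac{\teta^2C}{nQ}\E[f(x_t)-\uf]$, $\tfrac{\teta^2C}{nQ}\E[f(x_{t-1})-\uf]$, $\tfrac{\teta^2(2C\sigfn+D)}{nQ}$ contributions together with the $CL$ multiples of $\cE_t,\cE_{t-1}$; for the local-drift groups I use $L$-smoothness, $\norm{\nabla f_i(x_{i,t}^s)-\nabla f_i(x_t)}^2\le L^2\norm{x_{i,t}^s-x_t}^2$, to get the $L^2$ multiples of $\cE_t,\cE_{t-1}$; and for the gradient-change groups I use $L$-smoothness of $f_i$ and of $f$, $\norm{\nabla f_i(x_{t-1})-\nabla f_i(x_t)}^2\le L^2\norm{\Delta x_{t-1}}^2$, to get the $L^2\,\E[\norm{\Delta x_{t-1}}^2]$ piece. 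Summing the groups, regrouping the $\cE_t$ coefficient as $L(L+C)$ and the $\cE_{t-1}$ coefficient as $L(C+2L)$, and bounding the harmless $1/(nQ)\le 1$ on the drift pieces, yields \eqref{eq:dxt}.

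The per-step inequalities above (descent/smoothness, the conditional variance bound, Lemma \ref{lem:local_fi}) are routine; the real work — and the main obstacle — is the bookkeeping. The control variates $c_{i,t}$ and $c_t$ are themselves averages of $Q$ round-$(t-1)$ stochastic gradients, so the control-lag term carries a full duplicate of the round-$(t-1)$ noise, drift, and server displacement, and one must (i) verify that every cross term between different rounds and between different agents vanishes, via the tower property together with the across-agent independence in Assumption \ref{as:abc}, and (ii) carry the numerical constants through the nested Young's-inequality splittings so that they collapse into the $7,14,21,28$ appearing in \eqref{eq:dxt}. A small but necessary check is that Lemma \ref{lem:local_fi}, whose proof only uses the descent lemma between $x_{i,t}^s$ and $x_t$, is insensitive to the update rule and therefore applies verbatim to the SCAFFOLD iterates at both round $t$ and round $t-1$.
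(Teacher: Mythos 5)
Your proposal follows essentially the same route as the paper's proof: the same decomposition of the corrected gradient $g_{i,t}^s-c_{i,t}+c_t$ into round-$t$ noise, round-$t$ drift, round-$(t-1)$ noise and drift (per-agent and averaged), gradient-change terms, and $\nabla f(x_t)$ (the paper's \eqref{eq:dxt_split}), the same orthogonality/independence argument for the stochastic-gradient noise under Assumption \ref{as:abc}, smoothness for the drift terms, and Lemma \ref{lem:local_fi} to convert $\sum_{i,s}\E[f_i(\cdot)-\uf_i]$ into $\E[f(\cdot)-\uf]$, $\sigfn$, and $\cE$ terms. The only cosmetic difference is that you route $\nabla f_i(x_{i,t-1}^s)\to\nabla f_i(x_{t-1})\to\nabla f_i(x_t)$ explicitly, whereas the paper keeps $\norm{x_t-x_{i,t-1}^s}^2$ (i.e., $\Xi_t$) and afterwards applies $\Xi_t\le 2\cE_{t-1}+2\E[\norm{\Delta x_{t-1}}^2]$; this changes the number of Young's-inequality groups and hence the exact numerical constants, but not the argument.
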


\begin{proof}
    According to \eqref{eq:scaffold_ct}, we can split $\Delta x_t$ as follows.
    \begin{equation}
        \label{eq:dxt_split}
        \begin{aligned}
            &\Delta x_t= -\frac{\teta}{n Q}\sum_{i=1}^n\sum_{s=0}^{Q-1}\crk{ \brk{g_{i,t}^s - \nabla f_i(x_{i,t}^s)}\right.\\
            &\left. \quad + \brk{\nabla f_i(x_{i,t}^s) - \nabla f_i(x_{t})} + \brk{\nabla f_i(x_t) - \nabla f_i(x_{i, t - 1}^s)}\right.\\
            &\left.\quad + \brk{\nabla f_i(x_{i,t - 1}^s) - c_{i,t}} + \brk{c_t - \nabla f(x_{i,t - 1}^s)}\right.\\
            &\left.\quad + \brk{\nabla f(x_{i,t - 1}^s) - \nabla f(x_t)} + \nabla f(x_t)}.
        \end{aligned}
    \end{equation}


    It is worth noting that 
    \begin{equation}
        \label{eq:Xi_cE}
        \begin{aligned}
            &\Xi_{t + 1} 
            \leq 2\cE_t + 2\E\brk{\norm{\Delta x_t}^2}.
        \end{aligned}
    \end{equation}
    Therefore, we next estimate $\E\brki{\normi{\Delta x_t}^2}$ by $\mathcal{E}_t$ in light of \eqref{eq:dxt_split} and Assumption \ref{as:abc}. 

    Similar to those in \eqref{eq:var1_s1}-\eqref{eq:var_t}, we have for the first difference term in \eqref{eq:dxt_split} that 
    \begin{equation}
        \label{eq:dxt_var_t}
        \begin{aligned}
            &\condE{\norm{\frac{\teta}{n Q}\sum_{i=1}^n\sum_{s=0}^{Q-1} \brk{g_{i,t}^s - \nabla f_i(x_{i,t}^s)}}^2}{\cF_t}\\
            &\leq \frac{\teta^2C}{n^2Q^2}\sumn\sum_{s=0}^{Q-1}\condE{f_i(x_{i,t}^s) - \uf_i}{\cF_t} + \frac{\teta^2 D}{nQ}.
        \end{aligned}
    \end{equation}

    In light of the $L$-smoothness of $f_i$ for any $i\in[n]$ in Assumption \ref{as:smooth}, we have 
    
    \begin{equation}
        \label{eq:dxt_smooth}
        \begin{aligned}
            &{\norm{\frac{\teta}{n Q}\sum_{i=1}^n\sum_{s=0}^{Q-1}\brk{\nabla f_i(x_{i,t}^s) - \nabla f_i(x_{t})}}^2}\\
            &\leq \frac{\teta^2 L^2}{nQ}\sumn\sum_{s=0}^{Q-1}{\norm{x_{i,t}^s - x_t}^2},\\
            &{\norm{\frac{\teta}{n Q}\sum_{i=1}^n\sum_{s=0}^{Q-1}\brk{\nabla f_i(x_t) - \nabla f_i(x_{i,t-1}^s)}}^2}\\
            &\leq \frac{\teta^2 L^2}{nQ}\sumn\sum_{s=0}^{Q-1}{\norm{x_{t} - x_{i,t-1}^s}^2},\\
            &{\norm{\frac{\teta}{n Q}\sum_{i=1}^n\sum_{s=0}^{Q-1}\brk{\nabla f(x_{i,t-1}^s) - \nabla f(x_{t})}}^2}\\
            &\leq \frac{\teta^2 L^2}{nQ}\sumn\sum_{s=0}^{Q-1}{\norm{x_{i,t-1}^s - x_t}^2}.
        \end{aligned}
    \end{equation}

    Noting the relation for $c_{i,t}$ in \eqref{eq:scaffold_ci_sum}, we have 
    \small
    \begin{equation}
        \label{eq:dxt_cit}
        \begin{aligned}
            &\condE{\norm{\frac{\teta}{n Q}\sum_{i=1}^n\sum_{s=0}^{Q-1}\brk{\nabla f_i(x_{i,t-1}^s) - c_{i,t}}}^2}{\cF_{t-1}}\\
            &= \condE{\norm{\frac{\teta}{n Q}\sum_{i=1}^n\sum_{s=0}^{Q-1}\brk{\nabla f_i(x_{i,t-1}^s) - \frac{1}{Q}\sum_{r = 0}^{Q-1} g_{i,t-1}^r }}^2}{\cF_{t-1}}\\
            &\leq \frac{\teta^2C}{n^2Q^2}\sumn\sum_{s=0}^{Q-1}\condE{f_i(x_{i,t-1}^s) - \uf_i}{\cF_{t-1}} + \frac{\teta^2 D}{nQ},
        \end{aligned}
    \end{equation}\normalsize
    where the last inequality holds by similar derivation as in \eqref{eq:dxt_var_t}.

   Similarly, we have from $c_t = \sumn c_{i,t}/n$ in \eqref{eq:scaffold_ct} that 
    \begin{equation}
        \label{eq:dxt_ct}
        \begin{aligned}
            &\condE{\norm{\frac{\teta}{n Q}\sum_{i=1}^n\sum_{s=0}^{Q-1}\brk{\nabla f_i(x_{i,t-1}^s) - c_{t}}}^2}{\cF_{t-1}}\\
            &\leq \frac{\teta^2C}{n^2Q^2}\sumn\sum_{s=0}^{Q-1}\condE{f_i(x_{i,t-1}^s) - \uf_i}{\cF_{t-1}} + \frac{\teta^2 D}{nQ}.
        \end{aligned}
    \end{equation}

    Combing \eqref{eq:dxt_var_t}-\eqref{eq:dxt_ct} and noting that $\cF_{t-1}\subset \cF_t$ lead to 
    \small
    \begin{equation}
        \label{eq:dxt_cond}
        \begin{aligned}
            &\frac{1}{7}\condE{\norm{\Delta x_t}^2}{\cF_{t-1}} \leq \frac{\teta^2 L^2}{nQ}\sumn\sum_{s=0}^{Q-1} \condE{\norm{x_{i,t}^s - x_t}^2}{\cF_{t-1}}\\
            &\quad +  \frac{\teta^2C}{n^2Q^2}\sumn\sum_{s=0}^{Q-1}\condE{f_i(x_{i,t}^s) - \uf_i}{\cF_{t-1}} + \frac{3\teta^2 D}{nQ}\\
            &\quad + \frac{2\teta^2C}{n^2Q^2}\sumn\sum_{s=0}^{Q-1}\condE{f_i(x_{i,t-1}^s) - \uf_i}{\cF_{t-1}}\\
            &\quad + \frac{2\teta^2 L^2}{nQ}\sumn\sum_{s=0}^{Q-1}\condE{\norm{x_{t} - x_{i,t-1}^s}^2}{\cF_{t-1}}\\
            &\quad + \teta^2\condE{\norm{\nabla f(x_t)}^2}{\cF_{t-1}}.
        \end{aligned}
    \end{equation}\normalsize

    It is worth noting that Lemma \ref{lem:local_fi} relies solely on the smoothness of $f_i$ for any $i\in[n]$. Consequently, we can employ Lemma \ref{lem:local_fi} in the context of \eqref{eq:dxt_cond}. After that, taking the full expectation and invoking the definitions of $\Xi_t$ and $\cE_t$ in \eqref{eq:scaffold_add} yield the intended result \eqref{eq:dxt}.

\end{proof}




\begin{lemma}
    \label{lem:cEt}
    Let Assumptions \ref{as:abc} and \ref{as:smooth} hold. Set the stepsize $\eta_a$ to satisfy $\eta_a\leq \sqrt{1/(7Q^2L(L+2C))}$. Then,
    \begin{equation}
        \label{eq:cEt}
        \begin{aligned}
            &\cE_{t + 1} \leq 28\eta_a^2 Q C \E\brk{f(x_{t + 1}) - f^*} + 42\eta_a^2Q(2C \sigfn+D)\\
            &\quad + 56\eta_a^2 Q C \E\brk{f(x_{t}) - f^*} + 7\eta_a^2Q^2L (4C+3L)\cE_{t}\\
            &\quad + 7\eta_a^2 Q^2\E\brk{\norm{\nabla f(x_t)}^2}  + 14Q^2\eta_a^2 L^2\E\brk{\norm{\Delta x_t}^2}.
        \end{aligned}
    \end{equation}
\end{lemma}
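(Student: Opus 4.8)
\emph{Proof plan.} The plan is to start from the SCAFFOLD unrolling \eqref{eq:scaffold_x_diff}: since $x_{i,t+1}^0 = x_{t+1}$, for $s\in\{1,\ldots,Q-1\}$ one has
\[
x_{i,t+1}^s - x_{t+1} = -\eta_a\sum_{r=0}^{s-1} g_{i,t+1}^r - \eta_a s\,(c_{t+1} - c_{i,t+1}),
\]
and the $s=0$ term vanishes. By \eqref{eq:scaffold_ci_sum}--\eqref{eq:scaffold_ct} the drift $c_{t+1}-c_{i,t+1}=\frac1{nQ}\sum_j\sum_r g_{j,t}^r-\frac1Q\sum_r g_{i,t}^r$ is built entirely from round-$t$ stochastic gradients, whose conditional expectation given $\cF_t$ telescopes to $\nabla f_i(x_t)-\nabla f(x_t)$ up to $L$-smoothness errors in $x_{j,t}^r-x_t$ and $x_{i,t}^r-x_t$. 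First I would split $x_{i,t+1}^s - x_{t+1}$ into seven groups, mirroring \eqref{eq:dxt_split}: (i) the round-$(t{+}1)$ noise $g_{i,t+1}^r-\nabla f_i(x_{i,t+1}^r)$; (ii) the smoothness gap $\nabla f_i(x_{i,t+1}^r)-\nabla f_i(x_t)$; (iii)--(iv) the round-$t$ noise coming from $c_{i,t+1}$ and from $c_{t+1}$; (v)--(vi) the smoothness gaps $\nabla f_i(x_{i,t}^r)-\nabla f_i(x_t)$ coming from $c_{i,t+1}$ and its across-agents average coming from $c_{t+1}$; and (vii) the surviving aligned term $\nabla f(x_t)$ --- here the leftover $\nabla f_i(x_t)$ from writing $\nabla f_i(x_{i,t+1}^r)=(\nabla f_i(x_{i,t+1}^r)-\nabla f_i(x_t))+\nabla f_i(x_t)$ cancels against the $\nabla f_i(x_t)$ that the drift's conditional mean telescopes to. Then I apply $\|\sum_{k=1}^{7}a_k\|^2\le 7\sum_k\|a_k\|^2$, bound each inner $r$-sum either by Cauchy--Schwarz (costing a factor $\le Q$) for the deterministic groups, or by the within-round martingale property (the cross terms vanishing exactly as in \eqref{eq:var1_Q}--\eqref{eq:tower}) for the noise groups, and finally average over $i$ and $s=0,\ldots,Q-1$ and take full expectation, which sets up the recursion.

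The second step is to bound each group. For the noise groups (i), (iii), (iv) I would invoke Assumption \ref{as:abc} as in \eqref{eq:var1_s1}--\eqref{eq:var_t}, replacing them by $C\,\E[f_i(x_{i,t+1}^r)-f_i^*]+D$ and $C\,\E[f_i(x_{i,t}^r)-f_i^*]+D$; since Lemma \ref{lem:local_fi} uses only smoothness, it then turns $\frac1{nQ}\sum_i\sum_r\E[f_i(x_{i,\tau}^r)-f_i^*]$ into $2\E[f(x_\tau)-f^*]+2\sigfn+L\cE_\tau$ for $\tau\in\{t,t+1\}$, which yields the $\E[f(x_{t+1})-f^*]$, $\E[f(x_t)-f^*]$, $\sigfn$, $D$, $\cE_{t+1}$ and $\cE_t$ contributions. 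For the smoothness groups (ii), (v), (vi) I would use $L$-Lipschitzness of $\nabla f_i$ together with the triangle inequality $\|x_{i,t+1}^r-x_t\|\le\|x_{i,t+1}^r-x_{t+1}\|+\|\Delta x_t\|$ and $\|x_{i,t}^r-x_t\|$ directly, producing the $\cE_{t+1}$, $\cE_t$ and $\E[\|\Delta x_t\|^2]$ terms, while the aligned group (vii) contributes $\E[\|\nabla f(x_t)\|^2]$; throughout, an extra factor $Q$ arises from the sum over $s$, which gives the $Q^2$ prefactors on the deterministic terms, and the (weaker-than-needed) $Q$ prefactors on the noise-derived terms are simply dominated by $Q^2$.

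The last step is to collect terms and remove the self-reference: the procedure leaves $\cE_{t+1}$ on the right-hand side with a coefficient of order $\eta_a^2 Q^2 L(L+2C)$, and the stepsize restriction $\eta_a\le\sqrt{1/(7Q^2L(L+2C))}$ is exactly what makes this coefficient at most a constant below one, so it can be moved to the left-hand side and absorbed; a final rescaling then gives \eqref{eq:cEt} with the stated constants. I expect the main obstacle to be the bookkeeping around the control variate: $c_{t+1}$ and $c_{i,t+1}$ are $\cF_{t+1}$-measurable but carry only the randomness of round $t$, so Assumption \ref{as:abc} must be applied at level $\cF_t$ for groups (iii)--(iv) and at level $\cF_{t+1}$ for group (i), the two being merged through the tower property via $\cF_t\subset\cF_{t+1}$; arranging the decomposition so that the $\nabla f_i(x_t)$ term cancels \emph{exactly}, rather than only up to a controllable remainder, is the delicate point that makes the extra term involving $\nabla f_i$-dissimilarity (present in FedAvg) disappear here.
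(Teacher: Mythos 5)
Your proposal is correct and follows essentially the same route as the paper's proof: the paper likewise unrolls $x_{i,t+1}^s - x_{t+1}$ via \eqref{eq:scaffold_x_diff}, splits it into the same seven groups in \eqref{eq:Xit_split} mirroring \eqref{eq:dxt_split} (with the exact deterministic cancellation of the $\nabla f_i$ terms you describe), bounds the noise groups through Assumption \ref{as:abc} plus Lemma \ref{lem:local_fi} and the smoothness groups through $\cE_{t+1}$, $\cE_t$ and $\E[\|\Delta x_t\|^2]$ (via $\Xi_{t+1}\le 2\cE_t + 2\E[\|\Delta x_t\|^2]$), and then absorbs the self-referential $\cE_{t+1}$ term using precisely the stated restriction on $\eta_a$. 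No gaps.
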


\begin{proof}
    In light of \eqref{eq:scaffold_x_diff}, we have 
    \begin{equation}
        \label{eq:Xit_split} 
        \begin{aligned}
            & x_{i,t + 1}^s - x_{t + 1} = -\eta_a\sum_{r = 0}^{s-1}g_{i,t+1}^r -\eta_a s\prt{c_{t + 1} - c_{i,t+1}}\\
            &= -\eta_a\sum_{r = 0}^{s-1}\crk{\brk{g_{i,t+1}^r - \nabla f_i(x_{i,t+1}^r)} \right.\\
            &\left.\quad + \brk{\nabla f_i(x_{i,t+1}^r) - \nabla f_i(x_{t + 1})}} -\eta_a s \nabla f(x_t) \\
            &\quad - \frac{\eta_a s}{Q}\sum_{r=0}^{Q-1}\crk{ \brk{\nabla f_i(x_{t + 1}) - \nabla f_i(x_{i,t}^r)}\right.\\
            &\left.\quad + \brk{\nabla f_i(x_{i,t}^r) - c_{i,t+1}}}\\
            &\quad - \frac{\eta_a s}{nQ}\sumn\sum_{r=0}^{Q-1}\crk{\brk{c_{t+1} - \nabla f(x_{i,t}^r)}\right.\\
            &\left.\quad + \brk{\nabla f(x_{i,t}^r) - \nabla f(x_t)}} .
        \end{aligned}
    \end{equation}

    Notably, relation \eqref{eq:Xit_split} is similar to \eqref{eq:dxt_split}. Following the derivations in the proof of Lemma \ref{lem:dxt}, we obtain

    \begin{equation}
        \label{eq:Xit}
        \begin{aligned}
            &\frac{1}{7\eta_a^2}\cE_{t + 1} 
            \leq2 Q C \E\brk{f(x_{t + 1}) - f^* } + \frac{Q^2L (2C+L)}{2}\cE_{t + 1}\\
            &\quad + 4 Q C \E\brk{f(x_{t}) - f^* } + \frac{Q^2L (4C+L)}{2}\cE_{t}\\
            &\quad + 3 Q(D+2C\sigfn) + \frac{ Q^2}{2}\E\brk{\norm{\nabla f(x_t)}^2}  + \frac{Q^2 L^2}{2}\Xi_{t + 1}.
        \end{aligned}
    \end{equation}

    Letting $\eta_a\leq \sqrt{1/7Q^2L(L+2C)}$ and invoking \eqref{eq:Xi_cE} yield the desired result.
\end{proof}

It is worth noting that Lemma \ref{lem:descent} relies on relation \eqref{eq:xt}, which also holds for SCAFFOLD due to \eqref{eq:scaffold_ct}, except that we use stepsize $\eta_s\eta_a$ here: 
\begin{align*}
    x_{t + 1} 
    &= x_t - \frac{\eta_s\eta_a}{n}\sum_{i=1}^n\sum_{s=0}^{Q-1}g_i(x_{i,t}^s;\xi_{i,t}^s).
\end{align*}
Such an observation leads to the following lemma. 

\begin{lemma}
    \label{lem:scaffold_descent}
    Let Assumptions \ref{as:abc} and \ref{as:smooth} hold. Set the effective stepsize $\teta$ to satisfy $\teta\leq \min\crki{1/L, 1/C}$. Then,
    \begin{equation}
        \label{eq:scaffold_descent}
        \begin{aligned}
            &\E\brk{f(x_{t + 1}) - f^*} \leq \prt{1 + \frac{\teta^2 C L}{nQ}} \E\brk{f(x_t) - f^*}\\
            &\quad - \frac{\teta}{2}\E\brk{\norm{\nabla f(x_t)}^2} + \teta L^2\cE_t + \frac{\teta^2 L (D + 2C \sigfn)}{2nQ}.
        \end{aligned}
    \end{equation}
\end{lemma}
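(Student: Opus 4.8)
The plan is to leverage the observation recorded just above the statement: by \eqref{eq:scaffold_ct} and \eqref{eq:scaffold_xt_sum}, the server iterate of SCAFFOLD satisfies $x_{t+1} = x_t - \frac{\eta_s\eta_a}{n}\sumn\sum_{s=0}^{Q-1}g_i(x_{i,t}^s;\xi_{i,t}^s)$, which is exactly the FedAvg update \eqref{eq:xt} with $\alpha_t$ replaced by $\eta_s\eta_a = \teta/Q$. The proof of Lemma \ref{lem:descent} (Appendix \ref{app:lem_descent}) uses only this server-side update form together with Assumption \ref{as:abc}, namely unbiasedness, the filtration nesting $\cF_t\subset\cF_t^r\subset\cF_t^s$, and the ABC bound along the agents' local iterates; it never uses how the $x_{i,t}^\ell$ are generated. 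Hence Lemma \ref{lem:descent} applies verbatim to SCAFFOLD with $\alpha_t=\teta/Q$, and the hypothesis $\teta\le 1/L$ is precisely what guarantees $\alpha_t=\teta/Q\le 1/(QL)$, the stepsize restriction that Lemma \ref{lem:descent} requires.

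First I would instantiate \eqref{inequality:descent} with $\alpha_t=\teta/Q$ and take full expectation (tower property). Using $\alpha_tQ/2=\teta/2$ and $\alpha_t^2=\teta^2/Q^2$, this yields the gradient term $-\frac{\teta}{2}\E\brk{\norm{\nabla f(x_t)}^2}$, the drift term $\frac{\teta L^2}{2nQ}\sum_{\ell=0}^{Q-1}\sumn\E\brk{\norm{x_t-x_{i,t}^\ell}^2}$, the ABC term $\frac{\teta^2 CL}{2n^2Q^2}\sum_{\ell=0}^{Q-1}\sumn\E\brk{f_i(x_{i,t}^\ell)-\uf_i}$, and the constant $\frac{\teta^2 LD}{2nQ}$. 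By the definition of $\cE_t$ in \eqref{eq:scaffold_add}, $\sum_{\ell=0}^{Q-1}\sumn\E\brk{\norm{x_t-x_{i,t}^\ell}^2}=nQ\,\cE_t$, so the drift term equals $\frac{\teta L^2}{2}\cE_t$. Next, since Lemma \ref{lem:local_fi} depends only on Assumption \ref{as:smooth}, it is available here; taking full expectation in \eqref{eq:local_fi} and again invoking the definition of $\cE_t$ gives $\sum_{\ell=0}^{Q-1}\sumn\E\brk{f_i(x_{i,t}^\ell)-\uf_i}\le 2nQ\,\E\brk{f(x_t)-f^*}+2nQ\sigfn+LnQ\,\cE_t$. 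Substituting this, the ABC term contributes $\frac{\teta^2 CL}{nQ}\E\brk{f(x_t)-f^*}+\frac{\teta^2 CL}{nQ}\sigfn+\frac{\teta^2 CL^2}{2nQ}\cE_t$, and combining $\frac{\teta^2 LD}{2nQ}+\frac{\teta^2 CL}{nQ}\sigfn=\frac{\teta^2 L}{2nQ}(D+2C\sigfn)$ already produces the claimed constant term. Collecting everything leaves a bound with coefficient $\prt{1+\frac{\teta^2 CL}{nQ}}$ on $\E\brk{f(x_t)-f^*}$ and coefficient $\frac{\teta L^2}{2}+\frac{\teta^2 CL^2}{2nQ}$ on $\cE_t$.

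The final step, and the one to handle carefully, is collapsing the $\cE_t$ coefficient to exactly $\teta L^2$: since $\teta\le 1/C$ and $nQ\ge 1$ we have $\frac{\teta C}{nQ}\le 1$, hence $\frac{\teta^2 CL^2}{2nQ}\le\frac{\teta L^2}{2}$ and the coefficient is at most $\teta L^2$ (recall $\cE_t\ge 0$, so upper bounding the coefficient is legitimate). This is where the hypothesis $\teta\le 1/C$ is consumed, the companion hypothesis $\teta\le 1/L$ having already been used to license Lemma \ref{lem:descent}. The computation is a routine adaptation; the only genuine point of substance is recognizing that both Lemma \ref{lem:descent} and Lemma \ref{lem:local_fi} transfer unchanged to SCAFFOLD, since neither uses the structure of the local update recursion (which for SCAFFOLD carries the extra control variates $c_{i,t},c_t$) — only the server update and $L$-smoothness.
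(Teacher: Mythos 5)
Your proposal is correct and follows exactly the route the paper intends: the paper itself only sketches this lemma by noting that relation \eqref{eq:xt} holds for SCAFFOLD with stepsize $\eta_s\eta_a=\teta/Q$, so that Lemma \ref{lem:descent} applies verbatim, after which one substitutes Lemma \ref{lem:local_fi} and the definition of $\cE_t$. Your bookkeeping of the coefficients (including absorbing $\frac{\teta^2 CL^2}{2nQ}\cE_t$ via $\teta\le 1/C$ and matching the constant term $\frac{\teta^2 L}{2nQ}(D+2C\sigfn)$) is accurate and in fact more explicit than what the paper records.
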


In light of Lemmas \ref{lem:dxt}-\ref{lem:scaffold_descent}, we consider the Lyapunov function $\cL_k$: 
\begin{equation}
    \label{eq:scaffold_lya_can}
    \begin{aligned}
        \cL_k:= \E\brk{f(x_t) - f^*} + \teta \cB_1\cE_t +  \teta \cB_2\E\brk{\norm{\Delta x_t}^2},
    \end{aligned}
\end{equation}
where the positive constants $\cB_1,\cB_2$ are determined later.
\begin{lemma}
    \label{lem:scaffold_lya}
    Let Assumptions \ref{as:abc} and \ref{as:smooth} hold. Set the effective stepsize $\teta$ to satisfy 
    \begin{align*}
        \teta\leq \min\crk{\sqrt{\frac{nQ}{CL}}, \frac{1}{12(L+C)}, \frac{\eta_s}{\sqrt{84 L(L+C)}} }.
    \end{align*}
    Then,
    \begin{equation}
        \label{eq:scaffold_lya}
        \begin{aligned}
            &\cL_{k + 1} \leq \crk{1 + \frac{\teta^2 CL}{nQ} + \frac{280\teta^3 LC(L+C)}{\eta_s^2 Q}}\cL_k\\
            &\quad - \frac{\teta}{4}\E\brk{\norm{\nabla f(x_t)}^2} + \frac{\teta^2 L (2C\sigfn + D)}{2nQ}\\
            &\quad + \frac{224 \teta^3 L^2(2C\sigfn + D)}{Q\eta_s^2}.\\
        \end{aligned}
    \end{equation}
\end{lemma}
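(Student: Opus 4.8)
The plan is to verify the claimed inequality by direct substitution, after fixing the free constants in \eqref{eq:scaffold_lya_can} to $\cB_1 = 2L^2$ and $\cB_2 = 56\teta^2 L^4/\eta_s^2$, which is exactly the choice appearing in the Lyapunov function \eqref{eq:scaffold_lya_full}. Expanding $\cL_{k+1} = \E[f(x_{t+1}) - \uf] + \teta\cB_1\cE_{t+1} + \teta\cB_2\E\|\Delta x_{t+1}\|^2$, I would bound the three pieces as follows: the leading term by the descent estimate of Lemma \ref{lem:scaffold_descent}; the consensus term $\cE_{t+1}$ by Lemma \ref{lem:cEt}; and the drift term $\E\|\Delta x_{t+1}\|^2$ by Lemma \ref{lem:dxt} applied with the index shift $t \mapsto t+1$.

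The one conceptual subtlety is that the shifted Lemma \ref{lem:dxt} produces a term $7\teta^2\E\|\nabla f(x_{t+1})\|^2$, i.e.\ a ``future'' gradient term that cannot be folded back into the recursion. I would eliminate it exactly as in \eqref{eq:nf}, but applied to $f$ instead of the $f_i$: since $f = \frac1n\sumn f_i$ is $L$-smooth and $\uf = \inf_x f(x)$, the descent lemma gives $\|\nabla f(x_{t+1})\|^2 \le 2L(f(x_{t+1}) - \uf)$. All occurrences of $\E[f(x_{t+1}) - \uf]$ (the leading one, the one produced by this step, and those sitting inside the right-hand sides of Lemmas \ref{lem:cEt} and \ref{lem:dxt}) are then replaced by the descent bound of Lemma \ref{lem:scaffold_descent}; likewise the $\cE_{t+1}$ appearing on the right-hand side of the shifted Lemma \ref{lem:dxt} is re-bounded via Lemma \ref{lem:cEt}. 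This cascade terminates after finitely many levels, leaving every term expressed through the time-$t$ quantities $\E[f(x_t) - \uf]$, $\cE_t$, $\E\|\Delta x_t\|^2$, $\E\|\nabla f(x_t)\|^2$, plus the residual constants $\sigfn$ and $D$.

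It then remains to collect coefficients and match the requirements of \eqref{eq:scaffold_lya}. The gradient term: the descent applications supply $-\tfrac{\teta}{2}\E\|\nabla f(x_t)\|^2$, while Lemmas \ref{lem:cEt} and \ref{lem:dxt} supply positive multiples of order $\teta^2$ (further scaled by $\teta\cB_1$ and $\teta\cB_2$), and the constraints $\teta \le 1/(12(L+C))$ and $\teta \le \eta_s/\sqrt{84L(L+C)}$ are tuned so the positive part is at most $\tfrac{\teta}{4}$. The $\cE_t$ and $\E\|\Delta x_t\|^2$ terms: using $\eta_a = \teta/(\eta_s Q)$, their self-contributions — Lemma \ref{lem:cEt}'s $7\eta_a^2Q^2L(4C+3L)\cE_t$ and Lemma \ref{lem:dxt}'s $28\teta^2L^2\E\|\Delta x_{t-1}\|^2$ — together with the cross terms routed through the descent bound must fit inside $(1 + \text{small})\,\teta\cB_1$ and $(1 + \text{small})\,\teta\cB_2$ respectively; this is precisely what selects $\cB_1, \cB_2$ and forces the prefactor $1 + \tfrac{\teta^2 CL}{nQ} + \tfrac{280\teta^3 LC(L+C)}{\eta_s^2 Q}$. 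The $\E[f(x_t)-\uf]$ coefficient inherits the $1 + \tfrac{\teta^2 CL}{nQ}$ from Lemma \ref{lem:scaffold_descent} together with lower-order pieces absorbed into that prefactor, and the $\sigfn, D$ contributions assemble into the stated residual $\tfrac{\teta^2 L(2C\sigfn+D)}{2nQ} + \tfrac{224\teta^3 L^2(2C\sigfn+D)}{Q\eta_s^2}$. I expect the main obstacle to be exactly this bookkeeping: keeping the coefficients on $\cE_t$ and $\E\|\Delta x_t\|^2$ within the budget set by the Lyapunov weights, while simultaneously honoring all the stepsize constraints — including $\eta_a \le \sqrt{1/(7Q^2L(L+2C))}$, needed to invoke Lemma \ref{lem:cEt}, which one checks is implied by $\teta \le \eta_s/\sqrt{84L(L+C)}$.
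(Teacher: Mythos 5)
Your overall architecture is exactly the paper's: expand $\cL_{k+1}$, bound its three pieces by Lemma \ref{lem:scaffold_descent}, Lemma \ref{lem:cEt}, and the index-shifted Lemma \ref{lem:dxt}, cascade the time-$(t+1)$ quantities back to time $t$, and fix $\cB_1 = 2L^2$, $\cB_2 = 28\teta^2L^2\cB_1/\eta_s^2 = 56\teta^2L^4/\eta_s^2$. Your check that $\teta\le \eta_s/\sqrt{84L(L+C)}$ implies the hypothesis of Lemma \ref{lem:cEt} is also correct. But the one step you singled out as the ``conceptual subtlety'' is handled in a way that breaks the stated bound. You propose to eliminate the future-gradient term via $\norm{\nabla f(x_{t+1})}^2 \le 2L(f(x_{t+1}) - \uf)$ followed by the descent lemma. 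Tracing the coefficient: the term $7\teta^2\,\E[\norm{\nabla f(x_{t+1})}^2]$ sits inside $\teta\cB_2\,\E[\norm{\Delta x_{t+1}}^2]$, so your substitution contributes roughly $\teta\cB_2\cdot 14\teta^2 L\cdot\E\brk{f(x_t)-\uf} = 784\teta^5L^5/\eta_s^2\cdot\E\brk{f(x_t)-\uf}$ to the prefactor of $\cL_k$. This quantity is \emph{not} proportional to $C$ and carries no $1/Q$ or $1/(nQ)$ factor, whereas every correction in the claimed prefactor $1 + \teta^2CL/(nQ) + 280\teta^3LC(L+C)/(\eta_s^2Q)$ is proportional to $C$ and scaled by $1/Q$; in particular, when $C=0$ the lemma asserts a prefactor of exactly $1$, which your route cannot deliver. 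Under the stated stepsize constraints the extra term is of order $\teta^3L^3$, which would force an additional constraint $\teta^3L^3T=\cO(1)$ in the downstream argument (Lemma \ref{lem:min} / \eqref{eq:exp}) and inject a $C$-independent $L/T^{2/3}$ term into the final rate, contradicting Corollary \ref{cor:scaffold}.

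The paper's fix is different and worth noting: it bounds the future gradient by smoothness of $f$ itself,
\begin{equation*}
    \norm{\nabla f(x_{t+1})}^2 \le 2L^2\norm{\Delta x_t}^2 + 2\norm{\nabla f(x_t)}^2,
\end{equation*}
(equation \eqref{eq:nf_diff}), which routes the term into the $\E[\norm{\Delta x_t}^2]$ and $\E[\norm{\nabla f(x_t)}^2]$ slots rather than the $\E[f(x_t)-\uf]$ slot. There the small positive coefficients are absorbed, respectively, by the Lyapunov weight $\teta\cB_2$ (via the budget inequality \eqref{eq:cB2_can}) and by the $-\tfrac{\teta}{2}\E[\norm{\nabla f(x_t)}^2]$ margin from the descent lemma, leaving the coefficient of $\E[f(x_t)-\uf]$ untouched except for the $C$-proportional contributions coming from Assumption \ref{as:abc}. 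With this single substitution replaced, the rest of your bookkeeping plan goes through as in the paper.
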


\begin{proof}
    Substituting \eqref{eq:scaffold_descent} into \eqref{eq:cEt} leads to 
    \begin{equation}
        \label{eq:cEt_new}
        \begin{aligned}
            &\cE_{t + 1} \leq 112\eta_a^2QC \E\brk{f(x_t) - f^*} + 28\eta_a^2Q^2L(L+C)\cE_t\\
            &\quad + 14Q^2\eta_a^2 L^2\E\brk{\norm{\Delta x_t}^2}+ 7\eta_a^2Q^2\E\brk{\norm{\nabla f(x_t)}^2}\\
            &\quad + 56\eta_a^2Q(2C\sigfn + D),
        \end{aligned}
    \end{equation}
    where we let the effective stepsize $\teta$ satisfy $\teta\leq \min\crki{\sqrt{nQ/(CL)}, Q/(4C)}$.

    We also have 
    \begin{align}
        \norm{\nabla f(x_{t + 1})}^2\leq 2L^2\norm{\Delta x_t}^2 + 2\norm{\nabla f(x_t)}^2\label{eq:nf_diff}.
    \end{align}

    Substituting \eqref{eq:scaffold_descent}, \eqref{eq:cEt_new}, and \eqref{eq:nf_diff} into \eqref{eq:dxt} yields 
    \begin{equation}
        \label{eq:dxt_new}
        \begin{aligned}
            &\E\brk{\norm{\Delta x_{t + 1}}^2} 
            \leq 70\teta^2 L^2\E\brk{\norm{\Delta x_t}^2} + 56\teta^2 L(L+C)\cE_t\\
            &\quad + \frac{168\teta^2 C}{Q}\E\brk{f(x_t) - f^*} + 21\teta^2\E\brk{\norm{\nabla f(x_t)}^2}\\
            &\quad +  \frac{28\teta^2(2C\sigfn + D)}{nQ}  + \frac{392\teta^4 L(L+C)(2C\sigfn + D)}{Q\eta_s^2},
        \end{aligned}
    \end{equation}
    where we let $\eta_a\leq \sqrt{1/(7Q^2 L(L+C))}$ and $\teta\leq 1/C$.

    Substituting \eqref{eq:scaffold_descent}, \eqref{eq:cEt_new}, and \eqref{eq:dxt_new} into \eqref{eq:scaffold_lya_can} yields 
    \begin{equation}
        \label{eq:scaffold_lya_s1}
        \begin{aligned}
            &\cL_{k + 1}\leq a\E\brk{f(x_t) - f^*} +  \frac{\teta^2 L (2C\sigfn + D)}{2nQ}\\
            &\quad + \brk{\teta L^2+ \frac{28\teta^3 L(L+C)\cB_1}{\eta_s^2} + 56\teta^3 L(L+C)\cB_2}\cE_t\\
            &\quad + \prt{\frac{14\teta^3 L\cB_1}{\eta_s^2} + 70\teta^3L^2\cB_2 }\E\brk{\norm{\Delta x_t}^2}\\
            &\quad - \frac{\teta}{2}\prt{1 - \frac{14\teta^2\cB_1}{\eta_s^2} - 42\teta^2\cB_2}\E\brk{\norm{\nabla f(x_t)}^2}\\
            &\quad + \frac{56 \teta^3 \cB_1(2C\sigfn + D)}{Q\eta_s^2} + \frac{28\teta^3\cB_2 (2C\sigfn + D)}{nQ}\\
            &\quad + \frac{392\teta^5  L(L+C)\cB_2(2C\sigfn + D)}{Q\eta_s^2},
        \end{aligned}
    \end{equation}
    where 
    \begin{align*}
        a&:= 1 + \frac{\teta^2 CL}{nQ} + \frac{112\teta^3 C\cB_1}{Q\eta_s^2} + \frac{168\teta^3 C \cB_2}{Q}.
    \end{align*}

    To derive the recursion for $\cL_k$, it suffices to have 
    \small
    \begin{subequations}
        \label{eq:cBs_can}
        \begin{align}
            & L^2 + 56\teta^2 L(L+C)\cB_2 \leq \prt{1 - \frac{28\teta^2 L(L+C)}{\eta_s^2}}\cB_1,\label{eq:cB1_can}\\
            &\frac{14\teta^2 L\cB_1}{\eta_s^2} \leq \prt{1 - 70\teta^2 L^2}\cB_2. \label{eq:cB2_can}
        \end{align}
    \end{subequations}\normalsize

    Letting $\teta \leq \sqrt{1/(140 L^2)}$, we can choose $\cB_2 = 28\teta^2 L^2\cB_1/\eta_s^2$ according to \eqref{eq:cB2_can}. Then, the relation \eqref{eq:cB1_can} becomes 
    \begin{align*}
       L^2\leq \prt{1 - \frac{28\teta^2 L(L+C)(1 + 56\teta^2 L^2)}{\eta_s^2}}\cB_1.
    \end{align*}

    Thus, we can choose $\cB_1 = 2L^2$ by letting $\teta\leq \eta_s/\sqrt{84L(L+C)}$. Then, substituting $\cB_1$ and $\cB_2$ into \eqref{eq:scaffold_lya_s1} leads to the desired result \eqref{eq:scaffold_lya}.

\end{proof}

\subsection{Proof of Theorem \ref{thm:scaffold}}
\label{app:thm_scaffold}

\begin{proof}
    Applying Lemma \ref{lem:min} to \eqref{eq:scaffold_lya} leads to 
    \begin{equation}
        \label{eq:scaffold_minE_s1}
        \begin{aligned}
            &\min_{t=0,1,\ldots, T-1} \E\brk{\norm{\nabla f(x_t)}^2}\leq \frac{12 \cL_0}{\teta T} + \frac{2\teta L (2C\sigfn + D)}{nQ}\\
            &\quad + \frac{896\teta^2 L(L+C)(2C\sigfn + D)}{Q\eta_s^2},
        \end{aligned}
    \end{equation}
    where we invoke the condition of $\teta$ in \eqref{eq:scaffold_teta}.
    
    
    From \eqref{eq:scaffold_x_diff}, we have
    \begin{align*}
        &x_{i,0}^{\ell} - x_0 = - \eta_a\ell\prt{c_0 - c_{i,0} + \nabla f_i(x_0)}\\
        &\quad - \eta_a\sum_{s=0}^{\ell-1} \crk{\brk{g_{i,0}^s - \nabla f_i(x_{i,0}^s)} + \brk{\nabla f_i(x_{i,0}^s) - \nabla f_i(x_0)} }.
    \end{align*} 
    
    Denote $\Delta_0:= f(x_0) - f^*$, $\Delta_1:= \sumn\norm{c_0 - c_{i,0}}^2/n$, and $\Delta_2:= \sumn\norm{\nabla f_i(x_0)}^2 / n$. Similar to \eqref{eq:Xit_split}-\eqref{eq:Xit}, we have 
    
    \begin{equation}
        \label{eq:cE0}
        \begin{aligned}
            & \frac{1}{4\eta_a^2}\cE_0 \leq 4 QC\Delta_0 +  Q^2\Delta_1 + Q^2\Delta_2+ 2 Q(2C\sigfn + D).
        \end{aligned}
    \end{equation}
    
    From \eqref{eq:scaffold_ct} and \eqref{eq:dxt_split}, we have 
    \begin{align*}
        &\Delta x_0= -\frac{\teta}{n Q}\sum_{i=1}^n\sum_{s=0}^{Q-1}\crk{ \brk{g_{i,0}^s - \nabla f_i(x_{i,0}^s)}\right.\\
        &\left. \quad + \brk{\nabla f_i(x_{i,0}^s) - \nabla f_i(x_{0})} + \nabla f_i(x_0)}.
    \end{align*}
    
    Similar to those in the proof of Lemma \ref{lem:dxt}, we have 
    \begin{equation}
        \label{eq:dxt0}
        \begin{aligned}
            &\E\brk{\norm{\Delta x_0}^2} 
            \leq \frac{15\teta^2 C\Delta_0}{Q}  + \frac{12\teta^4 L(L+C)}{\eta_s^2}\Delta_1 + 6\teta^2\Delta_2\\
            &\quad + \frac{15\teta^2(2C\sigfn + D)}{4Q},
        \end{aligned}
    \end{equation}
    where we invoke $\teta\leq \eta_s/\sqrt{84 L(L+C)}$. Substituting \eqref{eq:cE0} and \eqref{eq:dxt0} into \eqref{eq:scaffold_minE_s1} leads to the desired result \eqref{eq:scaffold_minE}.
    

    If we set $\teta$ as in \eqref{eq:teta_etas}, then, 
    \begin{equation}
        \label{eq:teta_etas_upper}
        \begin{aligned}
            \frac{1}{\teta T} &= \sqrt{\frac{L^2 + C^2}{2nQ T}} + \frac{\sqrt{84 L(L+C)}}{\sqrt{\eta_s} T} + \frac{12 (L+C)}{T}.
        \end{aligned}
    \end{equation}

    Substituting \eqref{eq:teta_etas_upper} into \eqref{eq:scaffold_minE} yields the desired result \eqref{eq:scaffold_minE_etas}.

\end{proof}

}

\bibliographystyle{IEEEtran}
\bibliography{references_all}

\begin{IEEEbiography}[{\includegraphics[width=1in,height=1.25in,clip,keepaspectratio]{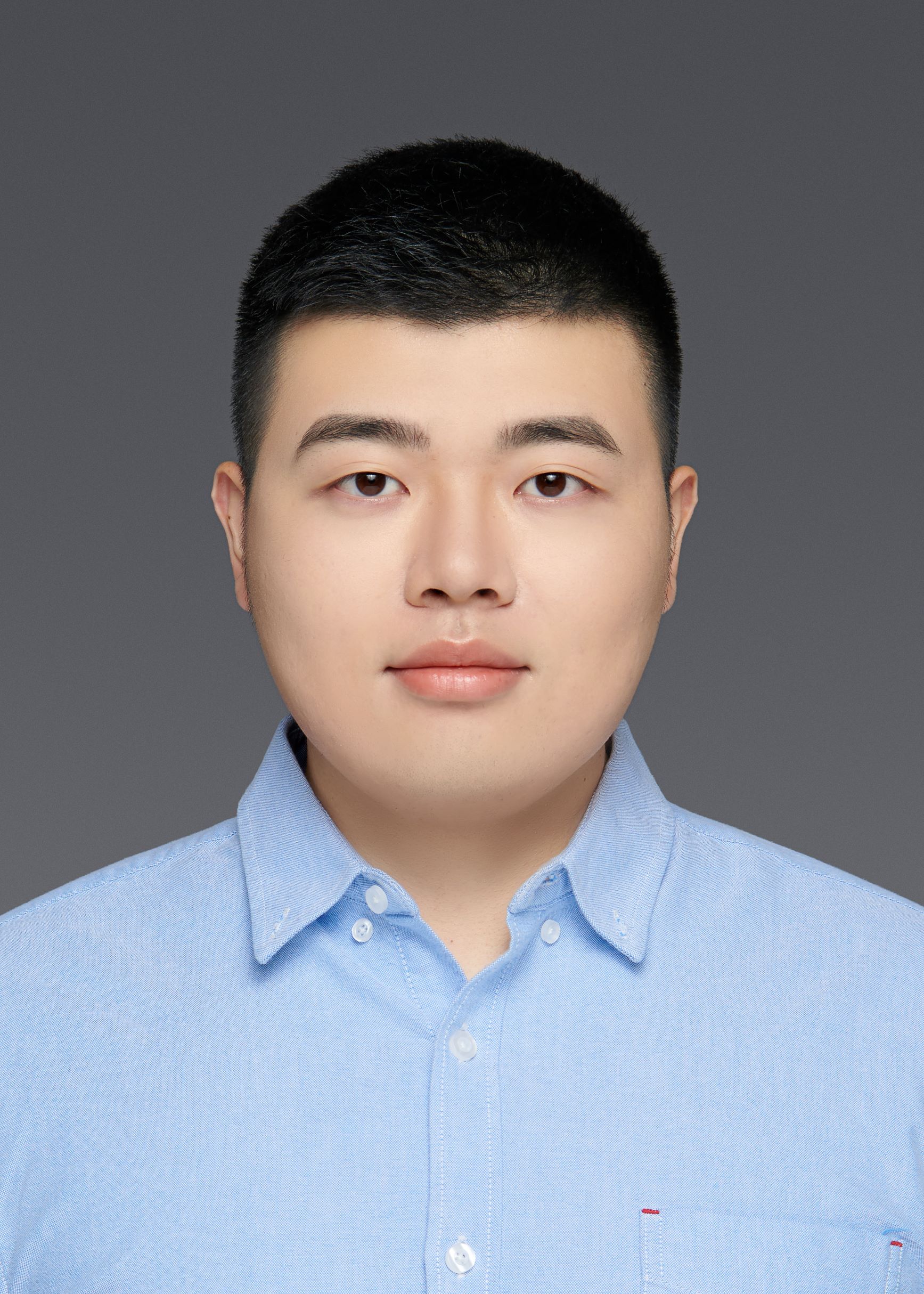}}]
    {Kun Huang} is currently a Ph.D. candidate in data science at the School of Data Science, The Chinese University of Hong Kong, Shenzhen, China. He obtained a B.S. degree in Applied Mathematics from Tongji University in 2018, and an M.S. degree in Statistics from the University of Connecticut in 2020. His research interests primarily lie in the fields of distributed optimization and machine learning.
    \end{IEEEbiography}
    
    \begin{IEEEbiography}[{\includegraphics[width=1in,height=1.25in,clip,keepaspectratio]{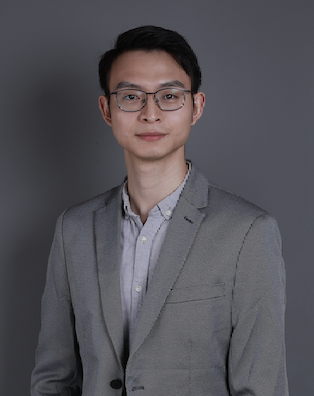}}]
      {Xiao Li} is an assistant professor at the School of Data Science at the Chinese University of Hong Kong, Shenzhen. He received his Ph.D. degree in electronic engineering from the Chinese University of Hong Kong in 2020 and his B.Eng. degree in communication engineering from Zhejiang University of Technology in 2016. He was a visiting scholar at the University of Southern California from October 2018 to April 2019. His research focuses on stochastic, nonsmooth, and nonconvex optimization with applications to machine learning and signal processing.
    \end{IEEEbiography}

      \begin{IEEEbiography}[{\includegraphics[width=1in,height=1.25in,clip,keepaspectratio]{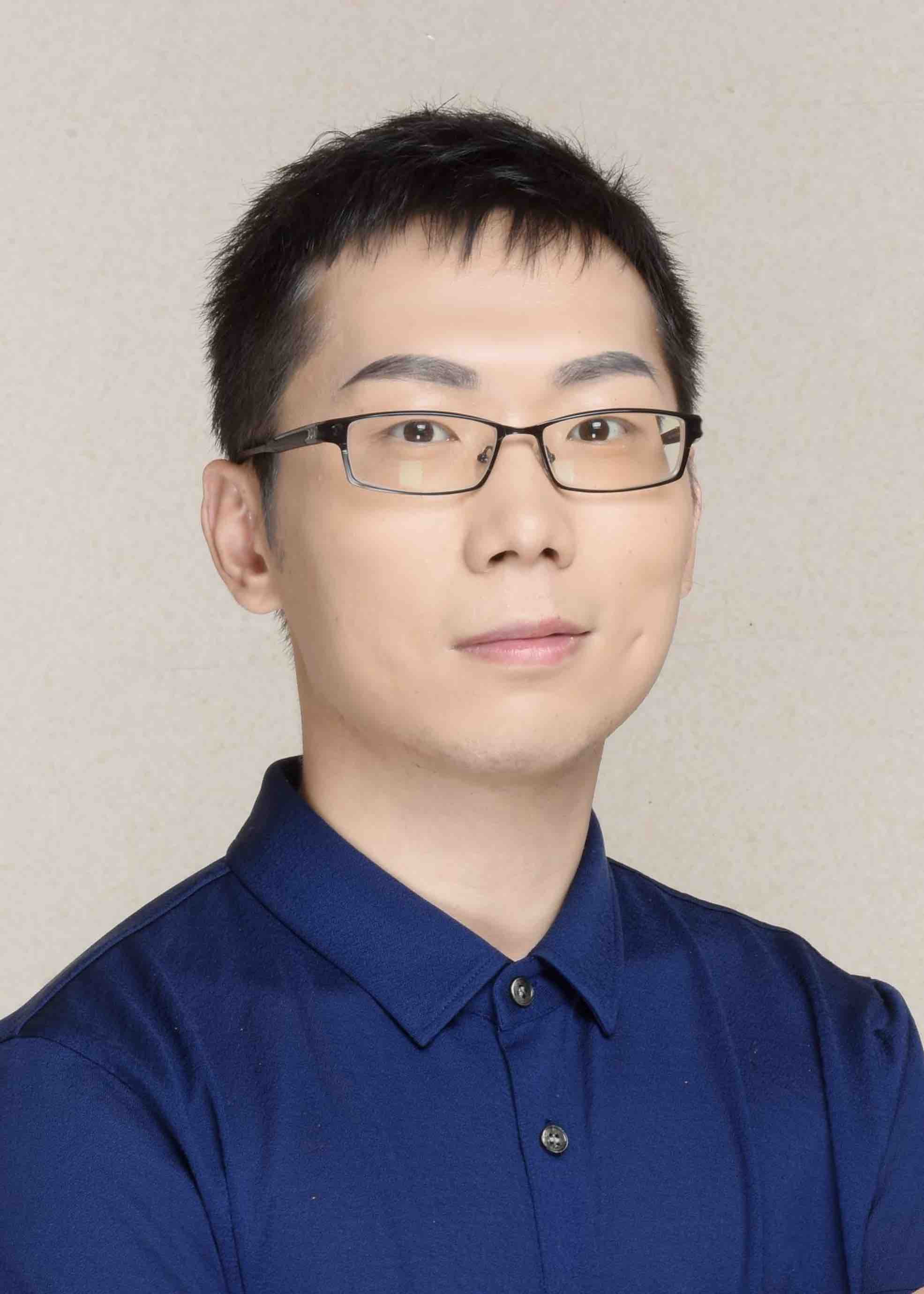}}]
        {Shi Pu}
            is currently an assistant professor in the School of Data Science, The Chinese University of Hong Kong, Shenzhen, China. He is also affiliated with Shenzhen Research Institute of Big Data. He received a B.S. Degree in engineering mechanics from Peking University, in 2012, and a Ph.D. Degree in Systems Engineering from the University of Virginia, in 2016. He was a postdoctoral associate at the University of Florida, Arizona State University and Boston University, respectively from 2016 to 2019. His research
            interests include distributed optimization, network science, machine learning, and game theory.
        \end{IEEEbiography}
    




\end{document}